\journal{Journal of \LaTeX\ Templates}
\newcommand{\be}{\begin{eqnarray}}
\newcommand{\ee}{\end{eqnarray}}
\newcommand{\ce}{\begin{eqnarray*}}
	\newcommand{\de}{\end{eqnarray*}}
\newtheorem{theorem}{Theorem}[section]
\newtheorem{lemma}[theorem]{Lemma}
\newtheorem{remark}[theorem]{Remark}
\newtheorem{definition}[theorem]{Definition}
\newtheorem{proposition}[theorem]{Proposition}
\newtheorem{corollary}[theorem]{Corollary}
\newtheorem{assumption}[theorem]{Assumption}
\def\[{{\Big[}}
\def\]{{\Big]}}
\def\<{{\langle}}
\def\>{{\rangle}}
\def\({{\Big(}}
\def\){{\Big)}}
\def\bx{{\mathbf{x}}}
\def\bt{\begin{theorem}}
	\def\et{\end{theorem}}
\def\bl{\begin{lemma}}
	\def\el{\end{lemma}}
\def\br{\begin{remark}}
	\def\er{\end{remark}}
\def\bx{\begin{Example}}
	\def\ex{\end{Example}}
\def\bd{\begin{definition}}
	\def\ed{\end{definition}}
\def\bp{\begin{proposition}}
	\def\ep{\end{proposition}}
\def\bc{\begin{corollary}}
	\def\ec{\end{corollary}}
\def\geq{\geqslant}
\def\leq{\leqslant}
\def\bx{{\bf x}}
\begin{document}

\begin{frontmatter}
\title{One-Dimensional McKean-Vlasov Stochastic Variational Inequalities and Coupled BSDEs with Locally H\"older Noise Coefficients}

\author[1]{Ning Ning}
\author[2]{Jing Wu \corref{mycorrespondingauthor}}
\cortext[mycorrespondingauthor]{Corresponding author}
\ead{wujing38@mail.sysu.edu.cn}
\author[2]{Jinwei Zheng}

\address[1]{Department of Statistics, Texas A\&M University, College Station, USA.}
\address[2]{School of Mathematics, Sun Yat-sen University, Guangzhou, 510275,  China.}

%
%
%

\begin{abstract}
In this article, we investigate three classes of equations: the McKean-Vlasov stochastic differential equation (MVSDE), the MVSDE with a subdifferential operator referred to as the McKean-Vlasov stochastic variational inequality (MVSVI), and the coupled forward-backward MVSVI. The latter class encompasses the FBSDE with reflection in a convex domain as a special case. We establish the well-posedness, in terms of the existence and uniqueness of a strong solution, for these three classes in their general forms. Importantly, we consider stochastic coefficients with locally H\"{o}lder continuity and employ different strategies to achieve that for each class.
\end{abstract}

\begin{keyword}
Mckean-Vlasov SDEs \sep Variational inequalities\sep Locally Lipschitz \sep Locally H\"older continuous \sep Well-posedness \sep One-Dimensional
\MSC[2010] 60G07 \sep  60G20 \sep 49J40
\end{keyword}

\end{frontmatter}



\section{Introduction}

Stochastic differential equations (SDEs) of the McKean-Vlasov type are often referred to as McKean–Vlasov SDEs (MVSDEs).
They are often used in statistical physics, large-scale social interactions within the theory of mean-field games, and various other settings.  However, the dependence of the coefficients on the solution and the law of the solution introduces significant difficulties in the study of these equations. In this paper, we consider one-dimensional MVSDEs, MVSVIs, and backward MVSVIs, with increasing complexity as described in Sections \ref{intro_MVSDE},\ref{intro_MVSVI}, and \ref{intro_MVFBSDE}, respectively.
We work on a complete filtered probability space $(\Omega, \mathscr{F}, \mathbb{F}=\{\mathscr{F}_t\}_{t\geq0}, \mathbb{P})$ which supports an $\mathbb{F}$-adapted standard Brownian Motion $B$.

\subsection{McKean-Vlasov stochastic differential equations}
\label{intro_MVSDE}
We first analyze the following time-inhomogeneous MVSDE: For some $T\in(0,+\infty)$ fixed, 
\begin{equation} \label{ec0}
X_t = \xi+\int_0^t b(s, X_s, \mu_{X_s})ds+\int_0^t \sigma(s, X_s)dB_s, \quad t\in[0,T],
\end{equation}
where $\mu_{X_s}$ is the distribution of the random variable $X_s$, the drift coefficient $b: \Omega\times\mathbb{R}^+\times\mathbb{R}\times\mathcal{P}(\mathbb{R})\rightarrow\mathbb{R}$ is a measurable function with $\mathcal{P}(\mathbb{R})$ being the space of probability measures in $\mathbb{R}$, and the diffusion coefficient $\sigma: \Omega\times\mathbb{R}^+\times\mathbb{R}\rightarrow\mathbb{R}$ is a measurable function. The study of random media has had rapid development during at least the last thirty years, whose typical research approach consists of the inclusion of a random variable in the coefficients of the SDE (see, e.g. \cite{alos1999stochastic, hausenblas2007spdes, bayraktar2019controlled}). The definition of a strong solution to equation \eqref{ec0} is provided in Definition \ref{ecd01}. In this paper, we establish the well-posedness of a strong solution in Theorem \ref{th1}, under Assumption \ref{assumption1}, where we merely suppose locally Lipschitz continuity of $b$ with respect to (w.r.t.) both the state $x$ and the distribution $\mu_x$ and suppose locally H\"older continuity of $\sigma$ w.r.t. the state.

In the case of globally Lipschitz continuous diffusion coefficient and one-sided globally continuous Lipschitz drift coefficient, \cite{wang2018distribution} established the well-posedness of strong solutions. \cite{crisan2018smoothing} investigated the regularity of the solutions of MVSDEs using Malliavin calculus.
Under super-linear growth conditions, \cite{dos2019freidlin} utilized the fixed point theorem to prove the well-posedness of strong solutions.  \cite{huang2019distribution} showed the well-posedness of MVSDEs with non-degenerate diffusion under integrable conditions.
Under weaker integrability assumptions, \cite{rockner2021well} obtained strong well-posedness for MVSDEs with constant diffusion coefficient. For further MVSDEs with non-Lipschitz coefficients, see e.g.  \cite{bao2021approximations, de2020strong, hammersley2021McKean, li2023strong}.

For locally Lipschitz conditions, the existing literature is somewhat limited. The first local condition is provided in a pioneer and beautiful work \citep{carmona2015forward}, to our best knowledge. The authors provided a detailed probabilistic analysis of controlled MVSDEs, and
pointed out that the usual assumption of bounded coefficients w.r.t. the state variable precludes the application of this result to the linear quadratic models which are often used as benchmarks in stochastic control.  The first locally Lipschitz condition appears in their (B2) assumption, but in order to establish the well-posedness, they considered a simplified model whose $b$ and $\sigma$ are both in a linear form (see (B1) on page $2673$ therein). Recently, \cite{ren2023singular} proved the well-posedness for MVSDEs when the drift coefficient contains a locally integrable term and when a Lyappunov type condition is satisfied; 
\cite{liu2023tamed} established the well-posedness of equation \eqref{ec0} with $b$ being locally Lipschitzian  and $\sigma$ being H\"older continuous in the state variable. 

Different from many works, we consider the situation under much weaker conditions on the coefficients, with $b$ being locally Lipschitzian  and $\sigma$ being locally H\"older continuous in the state variable. In the Euler scheme, the truncation argument is heavily used
to handle the dependency of the locally Lipschitz constant w.r.t. to the variable. To deal with the locally H\"older diffusion coefficient, we use the Yamada-Watanabe function defined in equation \eqref{eqn:Yamada-Watanabe_func} and its properties given in Theorem \ref{thm:Properties_of_YW}. This function was introduced by \cite{yamada1971uniqueness} and we use it to address the well-posedness problem.
\vspace{-0.01cm}

\subsection{McKean-Vlasov stochastic variational inequality}
\label{intro_MVSVI}
Next, we consider the following McKean-Vlasov stochastic variational inequality (MVSVI):
	\begin{align} \label{eos1}
		X_t \in \xi+\int_0^tb(s, X_s, \mu_{X_s})ds+\int_0^t\sigma(s, X_s)dB_s-\int_0^t\partial\psi(X_s)ds,
	\end{align}
which has the same definition as equation \eqref{ec0} but with an additional subdifferential term. Here, $\psi: \mathbb{R}\rightarrow\mathbb{R}$ is a convex function and $\partial$ denotes the subdifferential operator
\begin{align}
	\label{eqn:subdifferential}
	\partial\psi(x):=\Big\{z\in \mathbb{R}: (x'-x) z\le \psi(x')-\psi(x),\;\forall x'\in\mathbb{R} \Big\}.
\end{align}
 MVSVIs generalize MVSDEs with reflection, denoted as RMVSDEs, at the boundaries of convex domains. The extra assumption on $\psi$ is provided in Assumption \ref{assumption2} which is the standard assumption.
We establish the well-posedness of a strong solution (Definition \ref{def:mvsvi}) in Theorem \ref{thos1}.

\cite{sznitman1984nonlinear} was the first to prove the well-posedness of RMVSDEs in smooth bounded domains.  Strong restrictions on the coefficients as being Lipschitz and bounded, are usually imposed.  Recently, \cite{adams2022large} proved the well-posedness of RMVSDEs in general convex domains with $b$ being superlinear growth in both space and measure. 
\cite{wang2023distribution} proved the well-posedness and established functional inequalities for RMVSDEs with singular or monotone coefficients.
 \cite{huang2022singular} established the well-posedness of singular RMVSDEs, where the drift contains a term growing linearly in space and distribution and a locally integrable term independent of distribution, while the noise coefficient is weakly differentiable in space and Lipschitz continuous in distribution w.r.t. the sum of Wasserstein and weighted variation distances. New well-posedness results and exponential ergodicity of non-dissipative RMVSDEs and singular RMVSDEs were established in  \cite{wang2023exponential} and \cite{wang2023exponentialSPA}, respectively.

Our strategy to achieve the locally H\"older condition is different from all these literatures. We apply our results of SDEs combined with the Yosida-Moreau approximation and the Yamada-Watanabe function. Though we had already applied it in other settings such as those of \cite{ning2021well, ning2023multi}, locally H\"older condition has never been achieved. After utilizing the Yamada-Watanabe function in the Yosida-Moreau approximation but incorporating some intermediate results that we obtained of equation \eqref{ec0} in the Euler scheme, the mission is accomplished.
To the best of our knowledge, this is the first instance of utilizing these two techniques in the context of SVI, whether it is for MV typed or otherwise.

\subsection{McKean-Vlasov forward-backward stochastic variational system}
\label{intro_MVFBSDE}

At last, we consider the following McKean–Vlasov forward-backward stochastic variational system (MVFBSVS), with the forward equation \eqref{eos1} and the  backward equation coupled with it:
\begin{equation} \label{eb1}
\begin{aligned}
			Y_t  &\in G(X_T, \mu_{X_T})+\int_t^T F(s, X_s, Y_s, Z_s, \mu_{X_s}, \mu_{Y_s})ds-\int_t^TZ_sdB_s\\
		&\hspace{6.5cm}-\int_t^T\partial\psi_2(Y_s)ds,
\end{aligned}
	\end{equation}	
where $\mu_{Y_s}$ is the distribution of the random variable $Y_s$.
We consider a stochastic drift coefficient  $F:\Omega\times\mathbb{R}^+\times\mathbb{R}\times\mathbb{R}\times\mathbb{R}\times\mathcal{P}(\mathbb{R})\times\mathcal{P}(\mathbb{R})\rightarrow\mathbb{R}$ as a measurable function,
 and a stochastic terminal function  $G:\Omega\times\mathbb{R}\times\mathcal{P}(\mathbb{R})\rightarrow\mathbb{R}$ as a measurable function, both of which depend on the solution of the forward equation and its distribution. Here,  $\psi_2 :\mathbb{R}\rightarrow\mathbb{R}$ is a convex function. The definition of a strong solution to equation \eqref{eb1} is provided in Definition \ref{eqn:def_MVFBSVS}. In this paper, we establish the well-posedness of a strong solution in Theorem \ref{thb2}, under additional Assumption \ref{assumption4} for the backward equation.

Forward-backward SDE (FBSDE), McKean–Vlasov FBSDE (MVFBSDE), and FBSDE with reflection have wide applications (see, e.g., \cite{ma2001reflected,carmona2018probabilistic}).
To our best knowledge, the well-posedness of MVFBSDE with reflection and the well-posedness of MVFBSVS are both unestablished. In this paper, we provide the first well-posedness result of MVFBSVS, which includes the first well-posedness result of MVFBSDE with reflection in a convex domain. In addition to the locally Lipschitzian $b$ w.r.t. both $x$ and $\mu_x$ and locally H\"older continuous $\sigma$ w.r.t. $x$, we suppose locally Lipschitzian  $F$ w.r.t. $y$, $z$ and $\mu_y$, and suppose Lipschitzian $G$ w.r.t. $x$ and $\mu_x$.


\subsection{Organization of the paper}
The rest of the paper proceeds as follows. 	Section \ref{sec:Preliminaries} includes the necessary preliminaries.
In Sections \ref{sec:Well-posedness_MVSDE}, \ref{sec:Well-posedness_MVSVI}, and \ref{sec:Well-posedness_MVFBSVS},
we prove the well-posedness of strong solutions to equation \eqref{ec0}, \eqref{eos1}, and \eqref{eb1}, respectively.

\section{Preliminaries}	
\label{sec:Preliminaries}
In this section, we begin by introducing the notations that will be used throughout the paper in Section \ref{sec:Notation}. Subsequently, we provide important properties in Section \ref{sec:Properties}.

\subsection{Notations}
\label{sec:Notation}
We first introduce some common notations to be used throughout the paper, broadly classified by topics. 
\begin{itemize}
	\item Functions.
	
	\begin{itemize}
		\item Denote $\mathbbm{1}_A$ as the indicator function of set $A$, $\operatorname{Int}(A)$ as the interior of $A$, and $\overline{A}$ as the closure of $A$.
		\item For $0\leq s<t$, let $|f|_s^t$ be the  variation of $f$ on $[s,t]$.
		\item 	For $\epsilon\in(0,1), ~\delta>1$,  let $\varphi_{\epsilon,\delta}\in C(\mathbb{R}; [\frac{\epsilon}{\delta},\epsilon])$ be a symmetric function satisfying that
		$$
		0\le \varphi_{\epsilon,\delta}(x) \le \frac{2}{|x|\ln(\delta)}\quad\text{and}\quad\int_{\epsilon/ \delta}^{\epsilon}\varphi_{\epsilon,\delta}(x)dx=1.
		$$
Define $\varphi_{\epsilon,\delta}(x)=\varphi_{\epsilon,\delta}(|x|)$ for $x<0$.
		
Define the Yamada-Watanabe function
		\begin{align}
			\label{eqn:Yamada-Watanabe_func}
			V_{\epsilon,\delta}(x):=\int_0^x\int_0^y\varphi_{\epsilon,\delta}(z)dzdy;
		\end{align}
		see Theorem \ref{thm:Properties_of_YW} for its properties.
		
		\item 	For a convex function $\psi: \mathbb{R} \rightarrow \mathbb{R}$, its Yosida-Moreau approximation function \citep{barbu2010nonlinear} is defined as:
		\begin{align}
			\label{eqn:Yosida-Moreau approximation}
			\psi^n(x):=\inf\left\{\frac{n}{2}|x'-x|^2+\psi(x');\, x'\in \mathbb{R}\right\}.
		\end{align}
		Thus for every $n\geq1$, $\psi^n$ is convex and  continuously differentiable. The gradient of $\psi^n$, denoted as $\nabla\psi^n$, is monotone. Denote
		\begin{align}
			\label{eqn:Jnx}
			J_nx:=x-\frac{1}{n}\nabla\psi^n(x),
		\end{align}
		See Theorem \ref{d14} for the properties of $\psi^n$ and $J_nx$.
	\end{itemize}

	\item Measures, integrals, and spaces
	
	\begin{itemize}
		\item Let $C([0,T]; A)$ be the space of continuous functions with domain $[0,T]$ and range $A$.
		\item 	Let $\mathcal{P}(\mathbb{R})$ denote the collection of probability measures on $\mathbb{R}$. Define the Wasserstein space of order $p$, for  $p\geq1$, as
$$
\mathcal{P}_p(\mathbb{R}):=\left\{\mu\in\mathcal{P}(\mathbb{R}); \;\int_{\mathbb{R}}|x|^p\mu(dx)<\infty\right\}.
$$
  Define $W_p(\mu,\nu)$, the $p$-th order Wasserstein distance, as
		$$ W_p(\mu,\nu):=\inf\limits_{\pi\in\mathscr{C}(\mu,\nu)}\left(\int_{\mathbb{R}\times\mathbb{R}}|x-y|^pd\pi(x,y)\right)^\frac{1}{p},
		$$
		where  $\mathscr{C}(\mu,\nu)$ is the collection of coupling measures of $\mu$ and $\nu$. The space $\mathcal{P}_p(\mathbb{R})$ is a complete metric space with respect to $W_p(\mu,\nu)$.
		\item {Let  $(\Omega, \mathscr{F}, \mathbb{F}=\{\mathscr{F}_t\}_{t\geq0}, \mathbb{P})$ be a complete filtered probability space. Denote $\mathbb{S}^p[0,T]$ as the collection of continuous stochastic processes $\{X\}$ that are progressively measurable and $$\mathbb{E}\sup_{0\le s\le T}|X_s|^p< \infty.$$}
		\item Denote $\mathbb{H}^p[0,T]$ as the collection of progressively measurable processes $\{X\}$ satisfying $$\mathbb{E}\int_0^T|X_s|^pds< \infty.$$
	\end{itemize}
	
	Throughout the paper, the letter $C$, with or without subscripts, will represent a positive constant whose value may vary from line to line.
\end{itemize}

\subsection{Properties}
\label{sec:Properties}


The following theorem covers properties of the Yosida-Moreau approximation function.
\begin{theorem}[\cite{barbu2010nonlinear}]
	\label{d14}
	The Yosida-Moreau approximation function
	$\psi^n$ defined in \eqref{eqn:Yosida-Moreau approximation} and $J_n$  defined in \eqref{eqn:Jnx} satisfy that, for any $x,y\in \mathbb{R}$,
	$$
	\left \{\begin{aligned}
		&(x-y)(\nabla\psi^n(x)-\nabla\psi^m(y))\ge -\left(\frac{1}{n}+\frac{1}{m}\right)\nabla\psi^n(x)\nabla\psi^m(y),\\
		&|\nabla\psi^n(x)-\nabla\psi^n(y)|\le n|x-y|,\\
		&\psi^n(x)=\psi(J_nx)+\frac{1}{2n}|\nabla\psi^n(x)|^2,\\
		&\psi(J_nx)\le\psi^n(x)\le\psi(x),\\
		&|J_nx-J_ny|\le|x-y|,\\
		&\lim\limits_{n\rightarrow \infty}J_nx=\Pi_{\overline{D}}(x).
	\end{aligned}\right.
	$$
	Here, $\Pi_{\overline{D}}(x)$ denotes the projection of $x$ onto $\overline{D}$, where $$D:=\{x\in \mathbb{R}:\partial\psi(x)\neq\emptyset\}.$$
\end{theorem}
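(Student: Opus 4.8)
The plan is to derive every line of the theorem from the variational definition \eqref{eqn:Yosida-Moreau approximation} together with the monotonicity of the subdifferential of a convex function. First I would note that for each fixed $x$ and $n$ the function $x'\mapsto\frac n2|x'-x|^2+\psi(x')$ is strictly convex and coercive (a finite convex function on $\mathbb{R}$ is bounded below by an affine function, so adding $\frac n2|x'-x|^2$ makes it coercive), hence the infimum in \eqref{eqn:Yosida-Moreau approximation} is attained at a unique point, which we \emph{take} to be $J_nx$. Fermat's rule characterizes this minimizer by $n(x-J_nx)\in\partial\psi(J_nx)$, so setting $\nabla\psi^n(x):=n(x-J_nx)$ reproduces \eqref{eqn:Jnx}, namely $J_nx=x-\frac1n\nabla\psi^n(x)$, and records the single structural fact
\[\nabla\psi^n(x)\in\partial\psi(J_nx),\]
which drives everything else. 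Convexity of $\psi^n$ is immediate (it is the partial infimum of a jointly convex function of $(x,x')$); differentiability with gradient exactly $\nabla\psi^n$ I would obtain by using the competitor $x'=J_nx$ in the infimum defining $\psi^n(y)$, which together with the identity $\psi^n(x)=\frac n2|J_nx-x|^2+\psi(J_nx)$ yields $\psi^n(y)-\psi^n(x)\le\nabla\psi^n(x)(y-x)+\frac n2|y-x|^2$; since convexity supplies the matching lower bound by subgradients, the subdifferential of $\psi^n$ at $x$ is the singleton $\{\nabla\psi^n(x)\}$.

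Given the structural inclusion, the remaining items are elementary. Evaluating \eqref{eqn:Yosida-Moreau approximation} at its minimizer gives $\psi^n(x)=\frac n2|J_nx-x|^2+\psi(J_nx)=\frac1{2n}|\nabla\psi^n(x)|^2+\psi(J_nx)$, the third line; the choice $x'=x$ in \eqref{eqn:Yosida-Moreau approximation} gives $\psi^n(x)\le\psi(x)$, and discarding the nonnegative square in that identity gives $\psi(J_nx)\le\psi^n(x)$, the fourth line. For $|J_nx-J_ny|\le|x-y|$, apply monotonicity of $\partial\psi$ to the structural inclusion at $x$ and at $y$ to get $(J_nx-J_ny)(\nabla\psi^n(x)-\nabla\psi^n(y))\ge0$, substitute $\nabla\psi^n(x)-\nabla\psi^n(y)=n(x-y)-n(J_nx-J_ny)$ to obtain $(J_nx-J_ny)(x-y)\ge|J_nx-J_ny|^2$, and conclude by Cauchy--Schwarz. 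The Lipschitz bound $|\nabla\psi^n(x)-\nabla\psi^n(y)|\le n|x-y|$ follows by expanding $|\nabla\psi^n(x)-\nabla\psi^n(y)|^2=n^2|x-y|^2-2n^2(x-y)(J_nx-J_ny)+n^2|J_nx-J_ny|^2$ and using the previous inequality to see the last two terms sum to at most $-n^2|J_nx-J_ny|^2\le0$. For the cross-index inequality (first line), apply monotonicity of $\partial\psi$ to the structural inclusion at $x$ with index $n$ and at $y$ with index $m$, substitute $J_nx=x-\frac1n\nabla\psi^n(x)$ and $J_my=y-\frac1m\nabla\psi^m(y)$, and expand; the cross terms reassemble as $\frac1n|\nabla\psi^n(x)|^2+\frac1m|\nabla\psi^m(y)|^2-\big(\frac1n+\frac1m\big)\nabla\psi^n(x)\nabla\psi^m(y)$, and dropping the two nonnegative squares leaves precisely the claimed bound.

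For the last line, $\lim_{n\to\infty}J_nx=\Pi_{\overline D}(x)$ with $D=\{x:\partial\psi(x)\neq\emptyset\}$: fixing any $u\in D$ and $v\in\partial\psi(u)$, monotonicity of $\partial\psi$ against the structural inclusion at $x$ gives $n(J_nx-u)(x-J_nx)\ge(J_nx-u)v$, which after splitting $J_nx-u=(J_nx-x)+(x-u)$ yields an inequality of the form $n|J_nx-x|^2\le n(x-u)(x-J_nx)-(J_nx-u)v$; this forces $(J_nx)_n$ to be bounded and $\limsup_n|J_nx-x|\le\mathrm{dist}(x,\overline D)$. Extracting a subsequential limit $J_{n_k}x\to\ell$, dividing the same monotonicity inequality by $n_k$ and letting $k\to\infty$ gives $(\ell-u)(x-\ell)\ge0$ for every $u\in\overline D$; since $\overline D$ is convex and $\ell\in\overline D$, this is exactly the variational characterization $\ell=\Pi_{\overline D}(x)$, and uniqueness of the limit point upgrades the subsequential convergence to convergence of the whole sequence. (In the one-dimensional, everywhere-finite setting of this paper one has $D=\mathbb{R}$, so this reduces to $J_nx\to x$; I would nonetheless record the argument in the general form in which it is applied.)

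I expect the only genuinely non-mechanical steps to be the differentiability of $\psi^n$ with gradient $\nabla\psi^n$ and the limiting argument in the last line; the other five properties are bookkeeping around the monotonicity of $\partial\psi$ once the structural inclusion $\nabla\psi^n(x)\in\partial\psi(J_nx)$ is in hand. Since this statement is classical (it is the content of the cited \cite{barbu2010nonlinear}), in the paper I would simply refer to that source, but the sketch above is the route I would follow to reprove it.
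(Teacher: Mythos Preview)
Your proposal is correct, and you have correctly anticipated the paper's treatment: the paper states this theorem with the attribution \cite{barbu2010nonlinear} and provides no proof of its own, so there is nothing further to compare. The sketch you give is the standard derivation found in the cited reference, and each step is sound; in particular your observation that the single structural inclusion $\nabla\psi^n(x)\in\partial\psi(J_nx)$, obtained from Fermat's rule at the minimizer of \eqref{eqn:Yosida-Moreau approximation}, drives all six properties via monotonicity of $\partial\psi$ is exactly right. One small remark on the last line: you should state explicitly that $J_nx\in D$ (because $\partial\psi(J_nx)\ni\nabla\psi^n(x)$ is nonempty), so that any subsequential limit $\ell$ lies in $\overline D$; without this the variational inequality $(\ell-u)(x-\ell)\ge0$ characterizes a point that could a priori lie outside $\overline D$.
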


The following theorem covers properties of the subdifferential operator.
\begin{theorem}[\cite{rockafellar1970maximal}]
	\label{thm:Properties_of_subdifferential}	
	The subdifferential operator $\partial\psi$ is monotone, that is, for any $x, x'\in \mathbb{R}$, $z\in \partial\psi(x), z'\in \partial\psi(x')$, we have
	$$(x-x')(z-z')\ge 0.$$
	The subdifferential operator is also maximally monotone, that is, if $x,z\in\mathbb{R}$ satisfy that
	$$
	(x-x')(z-z')\ge0,\qquad \forall x'\in \mathbb{R}, ~ z'\in \partial\psi(x'),
	$$
	then $z\in\partial\psi(x)$.
\end{theorem}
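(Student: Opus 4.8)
The plan is to establish the two assertions separately: monotonicity is an immediate consequence of the defining relation \eqref{eqn:subdifferential}, while maximal monotonicity needs a little of the elementary regularity theory of finite convex functions on $\mathbb{R}$.

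For monotonicity, fix $x,x'\in\mathbb{R}$, $z\in\partial\psi(x)$, and $z'\in\partial\psi(x')$. Applying \eqref{eqn:subdifferential} twice — once for the pair $(x,z)$ with test point $x'$, once for $(x',z')$ with test point $x$ — gives
$$(x'-x)z\le\psi(x')-\psi(x),\qquad (x-x')z'\le\psi(x)-\psi(x').$$
Adding these, the right-hand sides cancel, leaving $(x'-x)(z-z')\le0$, i.e. $(x-x')(z-z')\ge0$, which is the claim.

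For maximal monotonicity, suppose $x,z\in\mathbb{R}$ satisfy $(x-x')(z-z')\ge0$ for every $x'\in\mathbb{R}$ and every $z'\in\partial\psi(x')$; I must show $z\in\partial\psi(x)$, that is, $(x'-x)z\le\psi(x')-\psi(x)$ for all $x'$. I would use that a finite convex $\psi\colon\mathbb{R}\to\mathbb{R}$ is locally Lipschitz, hence absolutely continuous on compact intervals, differentiable Lebesgue-a.e., and that at each point $t$ of differentiability $\psi'(t)$ is the (unique) element of $\partial\psi(t)$. Take $x'>x$. For a.e. $t\in(x,x')$ the hypothesis applied with test point $t$ and $z'=\psi'(t)\in\partial\psi(t)$ yields $(x-t)(z-\psi'(t))\ge0$, and since $x-t<0$ this forces $z\le\psi'(t)$. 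Integrating and using the fundamental theorem of calculus,
$$\psi(x')-\psi(x)=\int_x^{x'}\psi'(t)\,dt\ge\int_x^{x'}z\,dt=(x'-x)z.$$
The case $x'<x$ is symmetric: for a.e. $t\in(x',x)$ one has $x-t>0$, hence $z\ge\psi'(t)$, and integrating over $(x',x)$ gives $\psi(x)-\psi(x')\le(x-x')z$, i.e. again $(x'-x)z\le\psi(x')-\psi(x)$. Since $x'=x$ is trivial, we conclude $z\in\partial\psi(x)$.

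I expect no serious obstacle here. The only points needing care are the classical structural facts about finite convex functions on the line — local Lipschitz continuity, a.e. differentiability, and $\psi'(t)\in\partial\psi(t)$ wherever $\psi$ is differentiable — which together guarantee that the hypothesis supplies enough test pairs $(t,\psi'(t))$ to recover the subgradient inequality. An alternative, more structural route would be to invoke Minty's theorem (surjectivity of $I+\partial\psi$) and argue via the resolvent, but in the one-dimensional finite-valued setting the direct integral argument above is shorter and self-contained; in any case the statement is quoted from \cite{rockafellar1970maximal}.
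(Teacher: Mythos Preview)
Your proof is correct. The monotonicity argument is the standard two-line computation, and for maximal monotonicity your integral argument via a.e.\ differentiability of finite convex functions on $\mathbb{R}$ is clean and complete; the structural facts you invoke (local Lipschitz continuity, absolute continuity, $\psi'(t)\in\partial\psi(t)$ at points of differentiability) are all classical and hold in the finite-valued setting the paper works in.

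As for comparison: the paper does not actually prove this statement at all --- it is quoted as a known result with a citation to \cite{rockafellar1970maximal}, so there is no in-paper argument to compare against. Your write-up therefore supplies more than the paper does, giving a short self-contained proof tailored to the one-dimensional, everywhere-finite case rather than deferring to the general Hilbert-space theory in Rockafellar. The Minty/resolvent route you mention would be the natural proof in that general setting, but your direct approach is appropriate here.
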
	

The following theorem covers properties of the Yamada-Watanabe function.
\begin{theorem}[\cite{yamada1971uniqueness}]
	\label{thm:Properties_of_YW}
	The Yamada-Watanabe function $V_{\epsilon,\delta}(x)$ satisfies that
	\begin{equation}\label{tb12}
		\left \{\begin{aligned}
			&|x| - \epsilon \le V_{\epsilon,\delta}(x) \le |x|, \\
			&0 \le \operatorname{sgn}(x)V_{\epsilon,\delta}'(x) \le1,\\
			&0 \le V_{\epsilon,\delta}^{''}(x) \le \frac{2}{|x|\ln(\delta)}\mathbbm{1}_{[\epsilon/\delta,\epsilon]}(|x|).
		\end{aligned} \right.
	\end{equation}	
\end{theorem}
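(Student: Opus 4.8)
The plan is to reduce everything to the two explicit identities obtained by the fundamental theorem of calculus, namely
$$
V_{\epsilon,\delta}'(x)=\int_0^x\varphi_{\epsilon,\delta}(z)\,dz,\qquad V_{\epsilon,\delta}''(x)=\varphi_{\epsilon,\delta}(x),
$$
which are legitimate since $\varphi_{\epsilon,\delta}$ is continuous. Because $\varphi_{\epsilon,\delta}$ is even (by the stipulation $\varphi_{\epsilon,\delta}(x)=\varphi_{\epsilon,\delta}(|x|)$) and vanishes off $[\epsilon/\delta,\epsilon]\cup[-\epsilon,-\epsilon/\delta]$, the first identity shows $V_{\epsilon,\delta}'$ is odd, hence $V_{\epsilon,\delta}$ is even; this lets me verify each of the three displays only for $x\ge 0$ and then extend by the symmetry $\operatorname{sgn}(x)V_{\epsilon,\delta}'(x)=\operatorname{sgn}(|x|)V_{\epsilon,\delta}'(|x|)$ and $V_{\epsilon,\delta}(x)=V_{\epsilon,\delta}(|x|)$.

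First I would establish the third line: $V_{\epsilon,\delta}''(x)=\varphi_{\epsilon,\delta}(x)\ge 0$ is immediate, and the assumed pointwise bound $0\le\varphi_{\epsilon,\delta}(x)\le \frac{2}{|x|\ln\delta}$ together with the fact that $\varphi_{\epsilon,\delta}$ is supported in $\{|x|\in[\epsilon/\delta,\epsilon]\}$ gives $V_{\epsilon,\delta}''(x)\le \frac{2}{|x|\ln\delta}\mathbbm 1_{[\epsilon/\delta,\epsilon]}(|x|)$. Next, for $x\ge 0$ I would bound $V_{\epsilon,\delta}'(x)=\int_0^x\varphi_{\epsilon,\delta}(z)\,dz$: nonnegativity of $\varphi_{\epsilon,\delta}$ forces $V_{\epsilon,\delta}'(x)\ge 0$, while $V_{\epsilon,\delta}'(x)\le \int_0^\infty\varphi_{\epsilon,\delta}(z)\,dz=\int_{\epsilon/\delta}^{\epsilon}\varphi_{\epsilon,\delta}(z)\,dz=1$ by the normalization; the odd-symmetry then yields $0\le\operatorname{sgn}(x)V_{\epsilon,\delta}'(x)\le 1$ for all $x$.

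Finally, for the first line I would integrate the bound just obtained. The upper estimate is $V_{\epsilon,\delta}(x)=\int_0^x V_{\epsilon,\delta}'(y)\,dy\le\int_0^x 1\,dy=x$ for $x\ge 0$, hence $V_{\epsilon,\delta}(x)\le|x|$ by symmetry. For the lower estimate, if $0\le x\le\epsilon$ then $x-\epsilon\le 0\le V_{\epsilon,\delta}(x)$; if $x>\epsilon$, then for $y\ge\epsilon$ one has $V_{\epsilon,\delta}'(y)=\int_0^y\varphi_{\epsilon,\delta}(z)\,dz=1$ since $[\epsilon/\delta,\epsilon]\subseteq[0,y]$, so $V_{\epsilon,\delta}(x)\ge\int_\epsilon^x V_{\epsilon,\delta}'(y)\,dy=x-\epsilon$; symmetry extends this to $V_{\epsilon,\delta}(x)\ge|x|-\epsilon$ for all $x$.

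This argument is entirely elementary calculus; there is no genuine analytic obstacle. The only point requiring care is the bookkeeping of the sign/absolute-value cases, and the (implicit) use of the fact that $\varphi_{\epsilon,\delta}$ is supported in $\{|x|\in[\epsilon/\delta,\epsilon]\}$, which one reads off from the normalization being taken over that interval together with the pointwise bound; everything else is a direct substitution into the defining double integral.
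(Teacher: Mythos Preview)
Your argument is correct: the three displayed bounds follow directly from the identities $V_{\epsilon,\delta}'(x)=\int_0^x\varphi_{\epsilon,\delta}$ and $V_{\epsilon,\delta}''=\varphi_{\epsilon,\delta}$, the support and normalization of $\varphi_{\epsilon,\delta}$, and the even/odd symmetry you record. The paper itself does not supply a proof of this statement---it is quoted as a result from \cite{yamada1971uniqueness}---so there is no in-paper argument to compare your approach against; your elementary verification is exactly the standard one.
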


We shall also need the following lemma, which is taken from \cite[Lemma 4.6]{cepa1998problame}, to deal with the integration of functions having finite variations. 
\begin{lemma}\label{intconv}
Suppose $\{\kappa_n\}_{n\geq1}$ is a sequence of continuous functions from $[0,T]$ to $\mathbb{R}$ satisfying 
$\sup_n|\kappa_n|_0^T\leq C<\infty$ and $\kappa_n$ converges uniformly on $[0,T]$ to $\kappa$. Suppoese further $\{f_n\}$ is a sequence of continuous functions from $[0,T]$ to $\mathbb{R}$ converging uniformly to $f$. Then for all $0\leq s\leq t\leq T$, the following holds:
\begin{equation*}
\lim_{n\to\infty}\int_s^t f_n(r)d\kappa_n(r)=\int_s^t f(r)d\kappa(r).
\end{equation*}
\end{lemma}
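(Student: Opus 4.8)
The statement is a classical "continuity of the Riemann–Stieltjes integral under uniform convergence, with uniformly bounded variation" result, so the plan is to reduce the Stieltjes integral $\int_s^t f_n \, d\kappa_n$ to a form where both limits can be passed through cleanly. First I would split the difference as
\[
\int_s^t f_n \, d\kappa_n - \int_s^t f\,d\kappa = \int_s^t (f_n - f)\,d\kappa_n + \left(\int_s^t f\,d\kappa_n - \int_s^t f\,d\kappa\right).
\]
For the first term, the uniform bound $\sup_n |\kappa_n|_0^T \le C$ gives $\left|\int_s^t (f_n-f)\,d\kappa_n\right| \le \|f_n - f\|_{\infty,[0,T]} \cdot C$, which vanishes as $n\to\infty$ since $f_n \to f$ uniformly. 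So the crux is the second term: showing $\int_s^t f\,d\kappa_n \to \int_s^t f\,d\kappa$ for a fixed continuous $f$, given that $\kappa_n \to \kappa$ uniformly with uniformly bounded variation.

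For that second term I would first record that $\kappa$ itself has finite variation: by lower semicontinuity of total variation under pointwise (here uniform) convergence, $|\kappa|_0^T \le \liminf_n |\kappa_n|_0^T \le C$, so all the Stieltjes integrals in sight are well defined. Then I would use the integration-by-parts formula for Riemann–Stieltjes integrals: since $f$ is continuous and $\kappa_n$ (resp. $\kappa$) has finite variation on $[s,t]$,
\[
\int_s^t f\,d\kappa_n = f(t)\kappa_n(t) - f(s)\kappa_n(s) - \int_s^t \kappa_n\,df,
\]
and similarly for $\kappa$. The boundary terms converge because $\kappa_n(t)\to\kappa(t)$ and $\kappa_n(s)\to\kappa(s)$ pointwise. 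For the remaining integral $\int_s^t \kappa_n\,df$, now the integrator $f$ is fixed and of finite variation (wait—$f$ need not have finite variation; it is merely continuous), so instead I would estimate directly: approximate $\int_s^t \kappa_n\,df$ and $\int_s^t \kappa\,df$ by Riemann–Stieltjes sums over a common partition, but that again requires control on $df$. The cleaner route is to avoid integration by parts entirely and instead use a uniform approximation of $f$ by step functions (or polynomials): given $\eta>0$, pick a partition fine enough that the oscillation of $f$ on each subinterval is below $\eta$, let $f^\eta$ be the corresponding step function; then $\left|\int_s^t f\,d\kappa_n - \int_s^t f^\eta\,d\kappa_n\right| \le \eta C$ uniformly in $n$ and likewise with $\kappa$, while $\int_s^t f^\eta\,d\kappa_n \to \int_s^t f^\eta\,d\kappa$ because a step-function integral is just a finite linear combination of increments $\kappa_n(\beta) - \kappa_n(\alpha) \to \kappa(\beta)-\kappa(\alpha)$. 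Combining the three estimates and letting $\eta \to 0$ finishes the proof.

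The main obstacle is the bookkeeping in this last step: making sure the step-function approximation of $f$ controls $\int f^\eta\,d\kappa_n - \int f\,d\kappa_n$ \emph{uniformly in $n$}, which is exactly where the uniform variation bound $\sup_n |\kappa_n|_0^T \le C$ is indispensable—without it the approximation error could blow up along the sequence. Since the lemma is quoted verbatim from \cite[Lemma 4.6]{cepa1998problame}, in practice I would simply cite that reference rather than reproduce the $\varepsilon$-argument, but the sketch above is the self-contained proof one would write out.
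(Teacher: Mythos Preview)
Your proposal is correct, and in fact the paper does not prove this lemma at all: it is stated with the preamble ``which is taken from \cite[Lemma 4.6]{cepa1998problame}'' and no proof is given. You correctly anticipated this, and your self-contained sketch (split into $(f_n-f)\,d\kappa_n$ plus $f\,d(\kappa_n-\kappa)$, then handle the second piece by step-function approximation of $f$ using the uniform variation bound) is the standard argument and is sound.
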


\section{Well-posedness of the MVSDE}
\label{sec:Well-posedness_MVSDE}

In this section, we prove the existence and uniqueness of solutions  to equation \eqref{ec0} in Theorem \ref{th1}.
We impose the following conditions:
\begin{assumption}
	\label{assumption1}
	For any $x, x'\in\mathbb{R}$, $\mu, \mu'\in\mathcal{P}_1(\mathbb{R})$, $s\in (0,T]$, and $\omega\in \Omega$, suppose that
	$b(\cdot,\cdot,x,\mu)$ and $\sigma(\cdot,\cdot,x)$ are progressively measurable and there exists a constant $C>0$ such that
	$$|b(\omega, s, x, \mu)|\le C\big(1+|x|+\mu(|\cdot|)\big), \qquad |\sigma(\omega, s, x)|\le C(1+|x|),$$
	and {for any $x, ~x'\in\mathbb{R}$,
	\begin{align*}
&|b(\omega, s, x, \mu)-b(\omega, s, x', \mu')|\\
&\hspace{1.5cm}\le \Big(C\ln (e+|x|+|x'|)+\mu(|\cdot|)+\mu'(|\cdot|)\Big)\big[|x-x'|+W_1(\mu, \mu')\big], \\
	&|\sigma(\omega, s, x)-\sigma(\omega, s, x')|^2\le C\ln (e+|x|+|x'|)|x-x'|^{2\alpha+1},
\end{align*}
where $\alpha \in (0, \frac{1}{2}]$. Furthermore, the initial state $\xi$ satisfies that $\mathbb{E}|\xi|^4<\infty$.}
\end{assumption}
We first give the definition of the solution to equation \eqref{ec0}.
\begin{definition}\label{ecd01}
	A progressively measurable continuous process $X$ defined on $(\Omega, \mathscr{F}, \mathbb{F}, \mathbb{P})$ is called a strong solution to equation \eqref{ec0} if it satisfies the following conditions:
	\begin{itemize}
		\setlength\itemsep{0.15em}
		\item $\mathbb{P}(X_0=\xi)=1$.
		\item $\int_0^t\mathbb{E}|b(s, X_s, \mu_{X_s})|ds+\int_0^t\mathbb{E}|\sigma(s, X_s)|^2ds<\infty$, for any $t\in[0, T]$.
		\item $X$ satisfies that for any $t\in[0, T]$, $\mathbb{P}$-a.s.
		$$
		X_t=\xi+\int_0^tb(s, X_s, \mu_{X_s})ds+\int_0^t\sigma(s, X_s)dB_s.
		$$
	\end{itemize}
\end{definition}

The well-posedness of equation \eqref{ec0} is proved using the Euler method by first dividing $[0, T]$ into $n$ equal subintervals $[0, t_1], [t_1, t_2], \cdots, [t_{n-1}, t_n]$. Let $X^n_0=\xi$ and for $t\in [0, t_1]$, by \cite{oksendal2003stochastic} the following equation has a unique solution:
\begin{equation}\label{ec02}
	X_t^n=\xi+\int_0^tb(s, X^n_0, \mu_{\xi})ds+\int_0^t\sigma(s, X^n_0)dB_s.
\end{equation}
According to Jensen's inequality and the Burkholder-Davis-Gundy (BDG) inequality, for any $p\ge2$, we have
\begin{align*}
		&\hspace{-0.3cm}\mathbb{E}\sup\limits_{0\le s\le t_1}|X_s^n|^p\\
		& \le C\mathbb{E}|\xi|^p+C\mathbb{E}\int_0^{t_1}|b(s, X_0^n, \mu_{\xi})|^pds+\mathbb{E}\sup\limits_{0\le s\le t_1}\left|\int_0^{t_1}\sigma(s, X_0^n)dB_s\right|^p\\
		&\le C\mathbb{E}|\xi|^p+CT(1+\mathbb{E}|\xi|^p)+C\mathbb{E}\int_0^{t_1}|\sigma(s, X_0^n)|^pds\\
		&\le C\mathbb{E}|\xi|^p+CT(1+\mathbb{E}|\xi|^p)\\
		&\le C(1+\mathbb{E}|\xi|^p).
\end{align*}
Similarly, for $t\in [t_k, t_{k+1}]$ where $k=1,\ldots,n-1$, considering that $X^n$ evolves according to
\begin{equation}\label{ec2}
	dX_t^n=b(t, X_{t_k}^n, \mu_{X_{t_k}^n})dt+\sigma(t, X_{t_k}^n)dB_t,
\end{equation}
we have
$$\mathbb{E}\sup\limits_{t_k\le s\le t_{k+1}}|X_s^n|^p\le C_{n, p}(1+\mathbb{E}|\xi|^p),$$
where $C_{n, p}$ is a positive constant that only depends on $n$ and $p$.

Next, we use ${X^n}$ to find the solution of equation \eqref{ec0}.
\begin{lemma}\label{lc1}
{Assume $p \geq 2$ and $\mathbb{E}|\xi|^p<\infty$.} Under Assumption \ref{assumption1}, considering that $X_t^n$ evolves according to equations \eqref{ec02}-\eqref{ec2}, there exists a constant $C_{T,p}$ that only depends on $p$ and $T$, such that
	$$\sup_n\mathbb{E}\sup\limits_{0\le s\le T}|X_s^n|^p\le C_{p}(1+\mathbb{E}|\xi|^p).$$
\end{lemma}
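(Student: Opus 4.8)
The plan is to establish the uniform (in $n$) moment bound by a single Gronwall-type estimate applied to the interpolated Euler scheme, being careful to control the drift term via its logarithmic modulus of continuity and the linear growth of $\sigma$. First I would fix $p\geq 2$ and, for $t\in[0,T]$, introduce the step function $\eta_n(s):=t_k$ for $s\in[t_k,t_{k+1})$, so that $X^n$ satisfies $X^n_t=\xi+\int_0^t b(s,X^n_{\eta_n(s)},\mu_{X^n_{\eta_n(s)}})\,ds+\int_0^t\sigma(s,X^n_{\eta_n(s)})\,dB_s$ on all of $[0,T]$. Applying Jensen's inequality to the drift integral and the BDG inequality to the stochastic integral, then the linear growth bounds from Assumption \ref{assumption1} (namely $|b(\omega,s,x,\mu)|\le C(1+|x|+\mu(|\cdot|))$ and $|\sigma(\omega,s,x)|\le C(1+|x|)$), I obtain for $g_n(t):=\mathbb{E}\sup_{0\le s\le t}|X^n_s|^p$ an inequality of the shape
\begin{align*}
g_n(t)\le C(1+\mathbb{E}|\xi|^p)+C\int_0^t\Big(1+\mathbb{E}|X^n_{\eta_n(s)}|^p+\big(\mu_{X^n_{\eta_n(s)}}(|\cdot|)\big)^p\Big)\,ds.
\end{align*}

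Next I would absorb the measure term: since $\mu_{X^n_{\eta_n(s)}}(|\cdot|)=\mathbb{E}|X^n_{\eta_n(s)}|\le (\mathbb{E}|X^n_{\eta_n(s)}|^p)^{1/p}$ by Jensen (using $p\geq1$), we have $\big(\mu_{X^n_{\eta_n(s)}}(|\cdot|)\big)^p\le \mathbb{E}|X^n_{\eta_n(s)}|^p$, so both state-dependent contributions are dominated by $\mathbb{E}|X^n_{\eta_n(s)}|^p$. The key observation is that $\mathbb{E}|X^n_{\eta_n(s)}|^p\le \mathbb{E}\sup_{0\le r\le s}|X^n_r|^p=g_n(s)$ because $\eta_n(s)\le s$ and the supremum is monotone in its upper limit. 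This yields the closed inequality $g_n(t)\le C(1+\mathbb{E}|\xi|^p)+C\int_0^t g_n(s)\,ds$. To make this rigorous I must first know $g_n(t)<\infty$ for each fixed $t$; that is exactly the piecewise bound $\mathbb{E}\sup_{t_k\le s\le t_{k+1}}|X^n_s|^p\le C_{n,p}(1+\mathbb{E}|\xi|^p)$ already derived in the excerpt, which patched over the $n$ subintervals gives $g_n(T)\le C_{n,p}'(1+\mathbb{E}|\xi|^p)<\infty$ (here the constant is allowed to depend on $n$). With finiteness in hand, Gronwall's lemma gives $g_n(T)\le C(1+\mathbb{E}|\xi|^p)e^{CT}=:C_p(1+\mathbb{E}|\xi|^p)$, where now $C_p$ depends only on $p$ and $T$ (through $C$ and the BDG constant) but not on $n$. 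Taking the supremum over $n$ finishes the proof.

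The main point requiring care — and the only place where the structure of Assumption \ref{assumption1} genuinely matters — is that the \emph{growth} bounds on $b$ and $\sigma$ are linear, so the logarithmic term in the local modulus of continuity of $b$ and the H\"older-type local modulus of $\sigma$ play no role here; they are only needed later for uniqueness and for passing to the limit. One subtlety to flag: in the BDG step the bound $\mathbb{E}\sup_{0\le s\le t}|\int_0^s\sigma(r,X^n_{\eta_n(r)})dB_r|^p\le C_p\,\mathbb{E}\big(\int_0^t|\sigma(r,X^n_{\eta_n(r)})|^2dr\big)^{p/2}$ should be followed by Jensen (or H\"older) in $r$ to move the power $p/2$ inside the time integral, costing a factor $T^{p/2-1}$, before applying linear growth; this is routine but is where the $T$-dependence of $C_p$ enters. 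I expect no serious obstacle: the argument is a standard a priori moment estimate, and the only thing to be vigilant about is keeping the constant independent of $n$, which is automatic once one uses $\eta_n(s)\le s$ rather than trying to iterate over the $n$ subintervals with $n$-dependent constants.
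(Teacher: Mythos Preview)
Your argument is correct. The route differs from the paper's: you bound $|X^n_t|^p$ directly via the elementary inequality $|a+b+c|^p\le C_p(|a|^p+|b|^p+|c|^p)$, applying Jensen to the time integral and BDG to the stochastic integral before invoking linear growth, whereas the paper applies It\^o's formula to $(1+|X^n_t|^2)^{p/2}$, splits into four terms $\mathcal{I}_1,\dots,\mathcal{I}_4$, uses Young's inequality to close the drift and quadratic-variation pieces, and absorbs the BDG contribution of the martingale part via the usual $\frac12\mathbb{E}\sup(\cdot)$ trick. Both reach the same Gronwall inequality for $g_n(t)=\mathbb{E}\sup_{s\le t}|X^n_s|^p$, and both rely only on the linear-growth parts of Assumption~\ref{assumption1}; you are right that the logarithmic and H\"older moduli are irrelevant here. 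Your approach is shorter and avoids It\^o's formula altogether; the paper's It\^o-based template, on the other hand, is what gets reused verbatim later (e.g.\ in Step~1 of Theorem~\ref{thos1}), where the extra monotone term $-X^n_s\nabla\psi^n(X^n_s)$ enters naturally inside the It\^o expansion and can be dropped by sign---a structure the direct $|a+b+c|^p$ decomposition would not exploit as cleanly. Your handling of the a priori finiteness of $g_n$ and the bound $\mathbb{E}|X^n_{\eta_n(s)}|^p\le g_n(s)$ is exactly what is needed.
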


\begin{proof}
Denote $k_n(s)$ as the value of $t_k$ when $s\in[t_k, t_{k+1}]$.	According to It\^o's formula,
	\begin{align*}
		(1+|X_t^n|^2)^{\frac{p}{2}} =(1+|X_0^n|^2)^{\frac{p}{2}}+\mathcal{I}_1(t)+\mathcal{I}_2(t)+\mathcal{I}_3(t)+\mathcal{I}_4(t),
	\end{align*}
	where
	\begin{align*}
		\mathcal{I}_1(t):&=p\int_0^t(1+|X_s^n|^2)^{\frac{p-2}{2}}X_s^nb(s, X_{k_n(s)}^n, \mu_{X_{k_n(s)}^n})ds,\\
		\mathcal{I}_2(t):&=\frac{p}{2}\int_0^t(1+|X_s^n|^2)^{\frac{p-2}{2}}|\sigma(s, X_{k_n(s)}^n)|^2ds,\\
		\mathcal{I}_3(t):&=p\int_0^t(1+|X_s^n|^2)^{\frac{p-2}{2}}X_s^n\sigma(s, X_{k_n(s)}^n)dB_s,\\
		\mathcal{I}_4(t):&=\frac{p(p-2)}{2}\int_0^t(1+|X_s^n|^2)^{\frac{p-4}{2}}|X_s^n|^2|\sigma(s, X_{k_n(s)}^n)|^2ds.
	\end{align*}
	Under Assumption \ref{assumption1}, using Young's inequality that
	\begin{align}
		\label{eqn:young}
		a^\frac{p-2}{2}b\le \frac{p-2}{p}a^\frac{p}{2}+\frac{2}{p}b^\frac{p}{2} \quad \text{for } a,b\geq0,
	\end{align}
	and Jensen's inequality, we have
	\begin{equation}\label{ec4}
		\begin{aligned}
			&\hspace{-0.3cm}\mathcal{I}_1(t)+\mathcal{I}_2(t)+\mathcal{I}_4(t)\\
			& \le C_p\int_0^t(1+|X_s^n|^2)^{\frac{p-2}{2}}\Big(1+|X_s^n|^2+|X_{k_n(s)}^n|^2+\mathbb{E}|X_{k_n(s)}^n|^2\Big)ds\\
			&\le C_p\int_0^t\Big[(1+|X_s^n|^2)^{\frac{p}{2}}+(1+|X_{k_n(s)}^n|^2)^{\frac{p}{2}}+(1+\mathbb{E}|X_{k_n(s)}^n|^2)^\frac{p}{2}\Big]ds.
		\end{aligned} \nonumber
	\end{equation}
	Using the BDG inequality and Assumption \ref{assumption1}, we have
	\begin{equation}\label{ec5}
		\begin{aligned}
			&\mathbb{E}\sup\limits_{0\le s\le t}|\mathcal{I}_3(s)|\le \frac{1}{2}\mathbb{E}\sup\limits_{0\le r\le t}(1+|X_r^n|^2)^\frac{p}{2}+C_p\mathbb{E}\,\mathcal{I}_2(t).
		\end{aligned} \nonumber
	\end{equation}
	Therefore, we have
	\begin{equation}\label{ec6}
		\begin{aligned}
			\mathbb{E}\sup\limits_{0\le s\le T}(1+|X_s^n|^2)^\frac{p}{2} &\le (1+\mathbb{E}|X_0^n|^2)^{\frac{p}{2}}+C_{T,p}\int_0^T\mathbb{E}\sup\limits_{0\le r\le s}(1+|X_r^n|^2)^{\frac{p}{2}}ds.
		\end{aligned} \nonumber
	\end{equation}
	Using Gr\"onwall's Lemma, we have
	\begin{equation}\label{ec7}
		\mathbb{E}\sup\limits_{0\le s\le T}|X_s^n|^p\le C_{T,p}(1+\mathbb{E}|\xi|^p).
	\end{equation}
	\end{proof}

Now we give the following existence and uniqueness result for  equation \eqref{ec0}.
\begin{theorem}\label{th1}
Assume $p \geq 2$ and $\mathbb{E}|\xi|^p<\infty$. Then under Assumption \ref{assumption1}, equation \eqref{ec0} has a unique strong solution $X$. Furthermore, $X\in \mathbb{S}^p[0, T]$, that is, there exists a constant $C_{T,p}$ depending only on $T, ~p$ such that $$\mathbb{E}\sup\limits_{0\le s\le T}|X_s|^p\le C_{T, p}(1+\mathbb{E}|\xi|^p).$$
\end{theorem}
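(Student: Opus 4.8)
The plan is to construct the solution as the uniform (in $L^p$) limit of the Euler scheme $X^n$ and then to verify that any two solutions coincide, using the Yamada-Watanabe function $V_{\e,\d}$ to cope with the merely locally H\"older diffusion coefficient. First I would establish tightness/convergence of $\{X^n\}$. By Lemma \ref{lc1} we have the uniform moment bound $\sup_n\mathbb{E}\sup_{0\le s\le T}|X_s^n|^p\le C_p(1+\mathbb{E}|\xi|^p)$, and a standard estimate on the increments of \eqref{ec2} gives, for $s\le t$, $\mathbb{E}|X_t^n-X_s^n|^p\le C_p|t-s|^{p/2}(1+\mathbb{E}|\xi|^p)$ together with $\mathbb{E}|X_s^n-X_{k_n(s)}^n|^p\le C_p n^{-p/2}(1+\mathbb{E}|\xi|^p)$, so that $X^n$ and its piecewise-frozen version are asymptotically close. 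The difficulty with a McKean-Vlasov equation under only local regularity is that one cannot appeal to a global contraction; the remedy is the truncation argument advertised in the introduction: for $R>0$ set $\tau_R^n:=\inf\{t:|X_t^n|\ge R\}$, so that on $[0,\tau_R^n\wedge\tau_R^m]$ the logarithmic moduli of continuity in Assumption \ref{assumption1} are bounded by constants $C\ln(e+2R)$, reducing the problem on that event to the locally-Lipschitz-drift / locally-H\"older-diffusion situation with explicit (though $R$-dependent) constants, and the uniform moment bound makes $\mathbb{P}(\tau_R^n\le T)\le C_pR^{-p}$ uniformly in $n$.

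Next I would prove that $\{X^n\}$ is Cauchy in $\mathbb{S}^2[0,T]$ (this suffices, higher moments then coming from Lemma \ref{lc1} and Fatou). Fix $n,m$ and apply It\^o's formula to $V_{\e,\d}(X_t^n-X_t^m)$. The second-derivative term produces $\int_0^t V_{\e,\d}''(X_s^n-X_s^m)|\sigma(s,X_{k_n(s)}^n)-\sigma(s,X_{k_m(s)}^m)|^2\,ds$; splitting $\sigma(s,X_{k_n(s)}^n)-\sigma(s,X_{k_m(s)}^m)$ and using the bound $|\sigma(\omega,s,x)-\sigma(\omega,s,x')|^2\le C\ln(e+|x|+|x'|)|x-x'|^{2\alpha+1}$ from Assumption \ref{assumption1} together with $V_{\e,\d}''(x)\le \tfrac{2}{|x|\ln\d}\mathbbm 1_{[\e/\d,\e]}(|x|)$ (Theorem \ref{thm:Properties_of_YW}), on the stopped interval the frozen-diffusion contribution is controlled, up to the log factor $\le C\ln(e+2R)$, by $\tfrac{2}{\ln\d}\int_0^t |X_{k_n(s)}^n-X_{k_m(s)}^m|^{2\alpha}\,ds$, which for $\alpha\le\tfrac12$ we bound by $\tfrac{2}{\ln\d}\int_0^t(|X_s^n-X_s^m|+ |X_s^n-X_{k_n(s)}^n|+|X_s^m-X_{k_m(s)}^m|)^{2\alpha}ds$; sending $\d\to\infty$ after $n,m\to\infty$ kills this term, while the increment pieces vanish by the estimates above. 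The drift term, handled via $|\mathrm{sgn}(x)V_{\e,\d}'(x)|\le1$ and $|x|-\e\le V_{\e,\d}(x)$, together with the Lipschitz-in-measure bound and $W_1(\mu_{X_s^n},\mu_{X_s^m})\le \mathbb{E}|X_s^n-X_s^m|$, yields a term $\le C_R\int_0^t \mathbb{E}\sup_{r\le s}|X_r^n-X_r^m|\,ds + (\text{increment errors}) + C\e$; the martingale term is dispatched by BDG exactly as in Lemma \ref{lc1}. Collecting terms, taking expectations and $\sup$, then letting $\e\to0$ and $\d\to\infty$, Gr\"onwall on $[0,T\wedge\tau_R^n\wedge\tau_R^m]$ gives $\mathbb{E}\sup_{s\le T\wedge\tau_R^n\wedge\tau_R^m}|X_s^n-X_s^m|^2\to0$; combining with $\mathbb{P}(\tau_R^n\wedge\tau_R^m\le T)\le C_pR^{-p}$ and the uniform $p$-th moments (to control the event $\{\tau_R\le T\}$ by H\"older), then letting $R\to\infty$, shows $\{X^n\}$ is Cauchy in $\mathbb{S}^2[0,T]$. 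Its limit $X$ inherits the $\mathbb{S}^p$-bound from Lemma \ref{lc1} via Fatou.

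It then remains to pass to the limit in the Euler scheme and to prove uniqueness. For the limit I would use the $\mathbb{S}^2$ convergence plus the uniform moment bounds to pass to the limit termwise in \eqref{ec2}: $\int_0^t b(s,X_{k_n(s)}^n,\mu_{X_{k_n(s)}^n})ds\to\int_0^t b(s,X_s,\mu_{X_s})ds$ and likewise for the stochastic integral (using the It\^o isometry / BDG and the fact that $X_{k_n(s)}^n\to X_s$ in $L^2$ uniformly), the continuity of $b,\sigma$ in their arguments and the local bounds making the convergence of the integrands routine; this shows $X$ satisfies Definition \ref{ecd01}, with the integrability condition there following from Assumption \ref{assumption1} and $X\in\mathbb{S}^p$. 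Uniqueness is proved by the same Yamada-Watanabe-plus-truncation computation applied to the difference of two solutions $X,\tilde X$ (now with no time-discretization error, so the argument is cleaner): It\^o on $V_{\e,\d}(X_t-\tilde X_t)$ on $[0,T\wedge\tau_R]$, use the two moduli of continuity and the properties in Theorem \ref{thm:Properties_of_YW}, send $\e\to0,\d\to\infty$, apply Gr\"onwall to get $\mathbb{E}\sup_{s\le T\wedge\tau_R}|X_s-\tilde X_s|=0$, and finally $R\to\infty$. The main obstacle I anticipate is the interplay, inside the $V_{\e,\d}''$ term, between the \emph{locally} (log-)growing H\"older modulus of $\sigma$ and the frozen argument $X_{k_n(s)}^n$: one must simultaneously (i) localize by $\tau_R$ to turn $\ln(e+|x|+|x'|)$ into a constant, (ii) absorb the discretization gap $X_s^n-X_{k_n(s)}^n$ at the H\"older scale $2\alpha$, and (iii) choose the order of limits ($n,m\to\infty$, then $\d\to\infty$, then $\e\to0$, then $R\to\infty$) so that each error term genuinely vanishes; getting this bookkeeping right, rather than any single estimate, is the crux.
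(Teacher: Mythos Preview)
Your overall architecture---Euler scheme, Yamada--Watanabe function for the H\"older diffusion, truncation for the logarithmic local constants---matches the paper's. But two steps in your plan do not close as written.

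\textbf{First, the sup cannot be extracted directly from the $V_{\e,\d}$ computation.} You apply It\^o to $V_{\e,\d}(X^n-X^m)$ and then ``take expectations and sup'', dispatching the martingale term ``by BDG exactly as in Lemma~\ref{lc1}''. But BDG here produces
\[
\mathbb{E}\Bigl(\int_0^t |V_{\e,\d}'(Z_s^{m,n})|^2\,\bigl|\sigma(s,X_{k_n(s)}^n)-\sigma(s,X_{k_m(s)}^m)\bigr|^2\,ds\Bigr)^{1/2}
\;\le\; C\,\mathbb{E}\Bigl(\int_0^t L_R\,|Z_s^{m,n}|^{2\alpha+1}\,ds\Bigr)^{1/2}+\cdots,
\]
and this does \emph{not} split as $\tfrac12\mathbb{E}\sup V_{\e,\d}(Z^{m,n})+C\,\mathbb{E}\,\mathcal{I}_2$ the way it does in Lemma~\ref{lc1}: there is no factor of $1/\ln\d$ here, and the residual term is of order $L_R\,t$, which does not vanish under any of your limits. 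The paper resolves this by a two-stage argument: in Step~2 it applies It\^o to $V_{\e,\d}(Z_t^{m,n})$ \emph{without} taking sup, so the martingale has mean zero, and obtains only the pointwise bound $\sup_{t\le t_0}\mathbb{E}|Z_t^{m,n}|\to0$. Then in Step~3 it applies It\^o to $|Z_t^{m,n}|^2$; there BDG gives $\tfrac12\mathbb{E}\sup|Z|^2+C\,\mathbb{E}\int|\Delta\sigma|^2\,ds$, and the diffusion integral is controlled via Young's inequality $|z|^{1+2\alpha}\le 2\alpha|z|^2+(1-2\alpha)|z|$, the linear piece being fed by the Step~2 result. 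Your one-step plan skips this and does not close.

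\textbf{Second, you omit the small-time restriction and iteration.} After Gr\"onwall with constant $L_R+c_0\sim C\ln(e+2R)$, the bound carries a factor $(1+R)^{ct}$. Your bad-event control is only $\mathbb{P}(\tau_R^n\wedge\tau_R^m\le T)\le C\,R^{-p}$ with $p=4$ from Assumption~\ref{assumption1}; for $(1+R)^{ct}\cdot R^{-q}\to0$ one needs $ct<q$, which forces $t$ small. The paper makes this explicit: it chooses $\e=R^{-l}$ with $l=2/(1+2\alpha)$, obtains convergence only for $t\le t_0$ with $ct_0<4\alpha/(1+2\alpha)$, and then iterates on subintervals (Step~6). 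Your sketch treats $[0,T]$ in one shot and the order of limits you propose does not evade this.

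A smaller point: localizing by stopping times $\tau_R^n$ is awkward for the McKean--Vlasov term, because $W_1(\mu_{X_{k_n(s)}^n},\mu_{X_{k_m(s)}^m})\le\mathbb{E}|X_{k_n(s)}^n-X_{k_m(s)}^m|$ involves the \emph{unstopped} laws, so the Gr\"onwall loop on the stopped quantity does not close without an extra splitting that reintroduces an $R$-dependent error. The paper instead works with the unstopped process throughout and uses the indicator $\mathbbm{1}_{\Omega_R^{m,n}}$, which keeps the Gr\"onwall variable equal to the quantity appearing in the measure term.
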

\begin{proof}
	We complete the proof by proceeding with the following $5$ steps.
	\medskip
	
	\noindent\textbf{Step $1$.}
	Recall that $k_n(t)$ is defined as $t_k$ when $t\in[t_k, t_{k+1}]$.
	Using Lemma \ref{lc1}, the BDG inequality, and Jensen's inequality, we have that, 
	\begin{align*}
		&\mathbb{E}|X_t^n-X_{k_n(t)}^n|^p \\
		&\le 2^{p-1}\mathbb{E}\left|\int_{k_n(t)}^tb(s, X_{k_n(s)}^n, \mu_{X_{k_n(s)}^n})ds\right|^p+2^{p-1}\mathbb{E}\left|\int_{k_n(t)}^t\sigma(s, X_{k_n(s)}^n)dB_s\right|^p\\
		&\le \frac{C_p}{n^{p-1}}\int_{k_n(t)}^t\mathbb{E}\left(1+|X_{k_n(s)}^n|^{p}+\mathbb{E}|X_{k_n(s)}^n|^{p}\right)ds\\
		&\quad+\frac{C_p}{n^{\frac{p}{2}-1}}\left(\int_{k_n(t)}^t\mathbb{E}(1+|X_{k_n(s)}^n|^{p}+\mathbb{E}|X_{k_n(s)}^n|^{p})ds\right)\\
		&\le \frac{C_p}{n^\frac{p}{2}},
	\end{align*}
	which yields that
	\begin{align}
		\label{ec18}
		\sup\limits_{0\le s\le T}\mathbb{E}|X_{k_n(s)}^n-X_s^n|^p\le \frac{C_p}{n^\frac{p}{2}}.
	\end{align}
	\smallskip
	
	{
		\noindent\textbf{Step $2$.}
	We now estimate $\mathbb{E}|Z_t^{m,n}|$ where $$Z_t^{m,n}:=X_t^{m}-X_t^n$$ is assumed. Here, we are going to use the Yamada-Watanabe function $V_{\epsilon, \delta}(x)$ defined in equation \eqref{eqn:Yamada-Watanabe_func}; see Theorem \ref{thm:Properties_of_YW} for its properties. According to It\^o's formula, we have
	\begin{equation}\label{ec11}
		\begin{aligned}
			V_{\epsilon, \delta}(Z_t^{m,n}) &= \int_0^tV_{\epsilon, \delta}'(Z_s^{m,n})\Big(b(s, X_{k_n(s)}^n, \mu_{X_{k_n(s)}^n})-b(s, X_{k_{m}(s)}^{m}, \mu_{X_{k_{m}(s)}^{m}})\Big)ds\\
			&\quad+\frac{1}{2}\int_0^tV_{\epsilon, \delta}^{''}(Z_s^{m,n})\big|\sigma(s, X_{k_n(s)}^n)-\sigma(s, X_{k_m(s)}^m)\big|^2ds\\
			&\quad+\int_0^tV_{\epsilon, \delta}'(Z_s^{m,n})\big(\sigma(s, X_{k_n(s)}^n)-\sigma(s, X_{k_{m}(s)}^{m})\big)dB_s\\
			&=:\, \mathcal{I}_{1, 1}(t)+\mathcal{I}_{1, 2}(t)+\mathcal{I}_{1, 3}(t).
		\end{aligned} \nonumber
	\end{equation}
	By Lemma \ref{lc1}, $\mathbb{E}\,\mathcal{I}_{1,3}(t)=0$.
	Denote, for any $R>0$,
	$$\Omega^{m,n}_R:=\left\{\omega: \sup\limits_{0\le s\le T}|X_s^n|\vee \sup\limits_{0\le s\le T}|X_s^m|> R\right\}.$$
	Under Assumption \ref{assumption1} and by the properties of the Wasserstein distance  and equation (\ref{ec7}), denoting $$c_0:=\sup_n\mathbb{E}\sup_{t\leq T}|X^n_t|\quad\text{and}\quad L_R:=C\ln (e+2R),$$ we obtain that
	\begin{align*}
		\mathcal{I}_{1, 1}(t)
		&\le \int_0^t|V_{\epsilon, \delta}'(Z_s^{m,n})|\Big[(L_R+2c_0)\Big(|Z^{m,n}_s|+|X_{k_n(s)}^n-X_s^n|+|X_{k_{m}(s)}^m-X_s^{m}|\\
&\hspace{3cm}+\mathbb{E}|Z_s^{m,n}|+\mathbb{E}|X_{k_n(s)}^n-X_s^n|+\mathbb{E}|X_{k_{m}(s)}^{m}-X_s^{m}|\Big)\Big]ds\\
		&\quad+ C\int_0^t|V_{\epsilon, \delta}'(Z_s^{m,n})|\Big(1+2c_0+|X_{k_n(s)}^n|+|X_{k_{m}(s)}^{m}|\Big) \mathbbm{1}_{\Omega^{m,n}_R}ds\\
		&\le (L_R+2c_0)\int_0^t\Big[|Z_s^{m,n}|+|X_{k_n(s)}^n-X_s^n|+|X_{k_{m}(s)}^{m}-X_s^{m}|\\
&\hspace{2.5cm}
	+\mathbb{E}|X_{k_n(s)}^n-X_s^n|+\mathbb{E}|X_{k_{m}(s)}^{m}-X_s^{m}|+\mathbb{E}|X_s^n-X_s^{m}|\Big]ds\\
		&\quad+C\int_0^t\Big(1+2c_0+|X_{k_n(s)}^n|+|X_{k_{m}(s)}^{m}|\Big) \mathbbm{1}_{\Omega^{m,n}_R}ds.
	\end{align*}
	Under Assumption \ref{assumption1}, setting $\delta=2$, we have
	\begin{align*}
		\mathcal{I}_{1, 2}(t)&\le \int_0^tV_{\epsilon, \delta}^{''}(Z_s^{m,n})\Big[\big|\sigma(s, X_{k_n(s)}^n)-\sigma(s, X_{k_{m}(s)}^{m})\big|^2\mathbbm{1}_{\Omega\backslash\Omega^{m,n}_R}\\
		&\quad\hspace{2.6cm}+\big|\sigma(s, X_{k_n(s)}^n)-\sigma(s, X_{k_{m}(s)}^{m})\big|^2\mathbbm{1}_{\Omega^{m,n}_R}\Big]ds\\
		&\le C\int_0^t\epsilon^{2\alpha}L_Rds+\frac{C}{\epsilon}\int_0^t\Big(1+|X_{k_n(s)}^n|^2+|X_{k_{m}(s)}^{m}|^2\Big)\mathbbm{1}_{\Omega^{m,n}_R}ds
		\\
		&\quad+\frac{C}{\epsilon}\int_0^tL_R\Big[|X_{k_n(s)}^n-X_s^n|^{2\alpha+1}
		+|X_{k_{m}(s)}^{m}-X_s^{m}|^{2\alpha+1}\Big]ds.
	\end{align*}
	Using Lemma \ref{lc1}, we have
	\begin{equation}\label{ec14}
		\begin{aligned}
			\mathbb{E}\,\mathcal{I}_{1, 2}(t)&\le \frac{CL_R}{\epsilon}\int_0^t\Big(\mathbb{E}|X_{k_n(s)}^n-X_s^n|^{2\alpha+1}
+\mathbb{E}|X_{k_{m}(s)}^{m}-X_s^{m}|^{2\alpha+1}\Big)ds\\
			&\quad+\frac{C}{\epsilon}\int_0^t\mathbb{E}\Big[(1+|X_{k_n(s)}^n|^2+|X_{k_{m}(s)}^{m}|^2)\mathbbm{1}_{\Omega^{m,n}_R}\Big]ds+CtL_R\epsilon^{2\alpha}	.
		\end{aligned} \nonumber
	\end{equation}
	By H\"older's inequality, Chebyshev's inequality and Lemma \ref{lc1}, we have
	\begin{equation}\label{ec15}
		\begin{aligned}
			&\hspace{-0.8cm}\mathbb{E}\left[\int_0^T(1+|X_{k_n(s)}^n|^2+|X_{k_{m}(s)}^{m}|^2)\mathbbm{1}_{\Omega^{m,n}_R}ds\right]\le \frac{c_{T}}{R^{2}}(1+\mathbb{E}|\xi|^{4}),
		\end{aligned}
	\end{equation}
	and furthermore, 
		\begin{align}\label{ec16}
			\mathbb{E}|Z_t^{m,n}|
			&\le (L_R+c_0)\int_0^t\Bigg[\mathbb{E}|Z_s^{m,n}|+\mathbb{E}|X_{k_n(s)}^n-X_s^n|+\mathbb{E}|X_{k_{m}(s)}^{m}-X_s^{m}|\Bigg]ds\nonumber\\
			&\quad+\frac{CL_R}{\epsilon}\int_0^t\Bigg[\mathbb{E}|X_{k_n(s)}^n-X_s^n|^{2\alpha+1}
+\mathbb{E}|X_{k_{m}(s)}^{m}-X_s^{m}|^{2\alpha+1}\Bigg]ds\nonumber\\
			&\quad+\frac{C_T(1+\mathbb{E}|\xi|^{4})}{R^2}(1+\frac{1}{\epsilon})+CtL_R\epsilon^{2\alpha}+\epsilon.
		\end{align}
	Let $l=\frac{2}{1+2\alpha}, ~R>1$ and $\epsilon=\frac{1}{R^l}$.  Using Gr\"onwall's Lemma and equation \eqref{ec18}, we have
		\begin{align}\label{ec019}
			\mathbb{E}|Z_t^{m,n}|
			&\le C\Bigg[(L_R+c_0)\left({n}^{-1/2}+{m}^{-1/2}\right)+L_RR^{l}\left({n}^{-\alpha-1/2}+{m}^{-\alpha-1/2}\right)\nonumber\\
			&\hspace{4cm}+\frac{C(1+R^l)}{R^2}+{L_RR^{-2\alpha l}}\Bigg](1+R)^{ct}.
		\end{align}
Then for $t_0>0$ satisfying that $ct_0<\frac{4\alpha}{1+2\alpha}$, 
\begin{equation}\label{ec0191} \begin{aligned}
			\sup_{t\leq t_0}\mathbb{E}|Z_t^{m,n}|\to 0, \quad\mbox{by ~letting} ~ m, n\to\infty, ~~\mbox{and ~then} ~~R\to\infty.
\end{aligned}
	\end{equation}}	\smallskip
	
	{\noindent\textbf{Step $3$.}
	We now prove that $\{X^n; n\geq1\}$ is a Cauchy sequence in $\mathbb{S}^2[0, t_0]$. By It\^o's formula,
	\begin{equation}\label{ec020}
		\begin{aligned}	
			|Z_t^{m,n}|^2=\mathcal{I}_{2,1}(t)+\mathcal{I}_{2,2}(t)+\mathcal{I}_{2,3}(t),
		\end{aligned}\nonumber
	\end{equation}
	where 
	\begin{align*}
		\mathcal{I}_{2,1}(t):&=2\int_0^tZ_s^{m,n}\Big(b(s, X_{k_n(s)}^n, \mu_{X_{k_n(s)}^n})-b(s, X_{k_{m}(s)}^{m}, \mu_{X_{k_{m}(s)}^{m}})\Big)ds,\\
		\mathcal{I}_{2,2}(t):&=\int_0^t|\sigma(s, X_{k_n(s)}^n)-\sigma(s, X_{k_{m}(s)}^{m})|^2ds,\\
		\mathcal{I}_{2,3}(t):&= 2\int_0^tZ_s^{m,n}\big(\sigma(s, X_{k_n(s)}^n)-\sigma(s, X_{k_{m}(s)}^{m})\big)dB_s.
	\end{align*}
	Under Assumption \ref{assumption1}, by Young's inequality \eqref{eqn:young},  H\"older inequality, and Lemma \ref{lc1}, together with equations \eqref{ec18} and \eqref{ec15}, we have
		\begin{align*}	
			\mathbb{E}|\mathcal{I}_{2,1}(t)|
			&\le 2\mathbb{E}\int_0^t(L_R+c_0)|Z_s^{m, n}|\Big[|X_{k_n(s)}^n-X_{k_{m}(s)}^{m}|
			+\mathbb{E}|X_{k_n(s)}^n-X_{k_{m}(s)}^{m}|\Big]ds\\
			&\quad+C\mathbb{E}\int_0^t|Z_s^{m,n}|\Big(1+2c_0+|X_{k_n(s)}^n|+|X_{k_{m}(s)}^{m}|\Big) \mathbbm{1}_{\Omega^{m,n}_R}ds\\
			&\le 3(L_R+c_0)\int_0^t\mathbb{E}|Z_s^{m,n}|^2ds+\frac{C_{T, c_0}}{R^{2}}(1+\mathbb{E}|\xi|^4)\\
			&\quad+C(L_R+c_0)\int_0^t\Bigg[\mathbb{E}|X_{k_n(s)}^n-X_s^n|^2+\mathbb{E}|X_{k_{m}(s)}^{m}-X_s^m|^2\Bigg]ds\\
&\quad+(L_R+c_0)\int_0^t\Big(\mathbb{E}|Z_s^{m,n}|+\mathbb{E}|X_{k_n(s)}^n-X_s^n|+\mathbb{E}|X_{k_{m}(s)}^{m}-X_s^{m}|\Big)^2ds.
		\end{align*}
	It follows from the BDG inequality that
	\begin{equation}\label{ec19}
		\begin{aligned}	
			\mathbb{E}\sup_{0\le s\le t}|\mathcal{I}_{2,3}(s)|&\le \frac{1}{2}\mathbb{E}\sup_{0\le s\le t}|Z_s^{m,n}|^2+C\mathbb{E}\,\mathcal{I}_{2,2}(t).
		\end{aligned}\nonumber
	\end{equation}
	Under Assumption \ref{assumption1}, by Young's inequality that
	\begin{align}	
		\label{eqn:young2}
		|a|^{1+2\alpha}\le {2\alpha}|a|^2+(1-2\alpha)|a|,
	\end{align}
	 and by equation \eqref{ec15}, we have
	\begin{align*}	
		\mathbb{E}|\mathcal{I}_{2,2}(t)|&\le \mathbb{E}\int_0^tL_R|X_{k_n(s)}^n-X_{k_{m}(s)}^{m}|^{1+2\alpha}ds\\
		&\quad+C\mathbb{E}\int_0^t\Big(2+|X_{k_n(s)}^n|^2+|X_{k_{m}(s)}^{m}|^2\Big)\mathbbm{1}_{\Omega^{m,n}_R}ds\\
		&\le 3\int_0^tL_R\mathbb{E}|Z_s^{m,n}|^2ds+\int_0^tL_R\mathbb{E}|Z_s^{m,n}|ds+\frac{c_{T}}{R^2}(1+\mathbb{E}|\xi|^4)\\
		&\quad+3\int_0^tL_R\Big[\mathbb{E}|X_{k_n(s)}^n-X_s^n|^2+\mathbb{E}|X_{k_{m}(s)}^{m}-X_s^{m}|^2\Big]ds\\
		&\quad+ L_R\int_0^t\Big[\mathbb{E}|X_{k_n(s)}^n-X_s^n|
+\mathbb{E}|X_{k_{m}(s)}^{m}-X_s^{m}|\Big]ds.
	\end{align*}
	Summing up, by equations \eqref{ec18} and \eqref{ec019}-\eqref{ec0191}, we have
	\begin{align*}	
		\mathbb{E}\sup_{0\le s\le t_0}|Z_s^{m,n}|^2
		&\le  6(L_R+c_0)\int_0^{t_0}\mathbb{E}\sup_{0\le r\le s}|Z_r^{m,n}|^2ds+\frac{C_{T}}{R^{2}}(1+\mathbb{E}|\xi|^4)\\
		&\quad+ CL_R{t_0}\Big[\sup_{t\leq t_0}\mathbb{E}|Z_t^{m,n}|+n^{-1/2}+m^{-1/2}
\Big].
	\end{align*}
	By Gr\"onwall's Lemma,
	\begin{align*}	
		&\mathbb{E}\sup_{0\le s\le t_0}|Z_s^{m,n}|^2\\
		&\le C(1+R)^{ct_0}\Big[L_R{t_0}\big(\sup_{t\leq t_0}\mathbb{E}|Z_t^{m,n}|+n^{-1/2}+m^{-1/2}
\big)+\frac{C_{T}}{R^{2}}(1+\mathbb{E}|\xi|^4)\Big].
			\end{align*}
	Then by equation \eqref{ec0191} and noting that $ct_0<2$, we get by sending $R\to\infty$,
	\begin{equation}\label{ec240}
		\begin{aligned}	
			\lim_{m, n\to\infty}\mathbb{E}\sup_{t\in[0,t_0]}|Z^{m,n}_t|^2=0.
		\end{aligned}
	\end{equation}}
	\smallskip

	{\noindent\textbf{Step $4$.}  By equation \eqref{ec240}, $\{X^n\}$ is a Cauchy sequence on $\mathbb{S}^2[0, t_0]$, and then there exists $X\in \mathbb{S}^2[0, t_0]$ satisfying that
	\begin{equation}\label{ec25}
		\lim\limits_{n\rightarrow \infty}\mathbb{E}\sup_{0\le s\le t_0}|X_s^n-X_s|^2=0.
	\end{equation}
	By Lemma \ref{lc1}, we know that $X\in\mathbb{S}^p([0,t_0])$ for all $p\geq2$ satisfying $$\mathbb{E}\sup\limits_{0\le t\le t_0}|X_t|^p\le C(1+\mathbb{E}|\xi|^p).$$
	Now our goal is to show that $X$ is a solution to equation \eqref{ec0} on $[0, t_0]$.
	By the properties of the Wasserstein distance, we have
	\begin{equation}\label{ec26}
		\lim\limits_{n\rightarrow \infty}\sup_{0\le s\le t_0}W_1(\mu_{X_s^n}, \mu_{X_s})\le \lim\limits_{n\rightarrow \infty}\mathbb{E}\sup_{0\le s\le t_0}|X_s^n-X_s|=0.\nonumber
	\end{equation}
	Notice that for any $t\in [0, t_0]$,
	\begin{equation}\label{ec27}
		\begin{aligned}
			\lim\limits_{n\rightarrow \infty}W_1(\mu_{X_{k_n(t)}^n}, \mu_{X_t})
			&\le \lim\limits_{n\rightarrow \infty}W_1(\mu_{X_{k_n(t)}^n}, \mu_{X_t^n})+\lim\limits_{n\rightarrow \infty}W_1(\mu_{X_t^n}, \mu_{X_t})\\
			&\le \lim\limits_{n\rightarrow \infty}\mathbb{E}|X_{k_n(t)}^n-X_t^n|+\lim\limits_{n\rightarrow \infty}W_1(\mu_{X_t^n}, \mu_{X_t})\\
			&\le \lim\limits_{n\rightarrow \infty}\frac{C}{\sqrt{n}}+\lim\limits_{n\rightarrow \infty}W_1(\mu_{X_t^n}, \mu_{X_t})\\
			&=0.
		\end{aligned}\nonumber	
	\end{equation}
	Applying the condition on $b$ and Lemma \ref{lc1} yields
	\begin{align*}	
		&\mathbb{E}\int_0^{t_0}\Big|b(s, X_{k_n(s)}^n, \mu_{X_{k_n(s)}^n})-b(s, X_s, \mu_{X_s})\Big|ds\\
		&\le 2\mathbb{E}\int_0^{t_0}\Big(L_R+\mathbb{E}|X_{k_n(s)}^n|+\mathbb{E}|X_{s}|\Big)\Big(|X_{k_n(s)}^n-X_s|+W_1(\mu_{X_{k_n(s)}^n}, \mu_{X_s})\Big)ds\\
&\quad+2\mathbb{E}\int_0^{t_0}\Big(1+|X_{k_n(s)}^n|+|X_s|+\mathbb{E}|X_{k_n(s)}^n|+\mathbb{E}|X_s|\Big)\\
&\hspace{6cm}\times\mathbbm{1}_{\big\{\sup_{t\leq T}(|X^n_t|\vee|X_t|)>R\big\}}ds\\
		&\le C(1+L_R)\int_0^{t_0}\mathbb{E}|X_{k_n(s)}^n-X_s|ds +\frac{C}{R}(1+\mathbb{E}|\xi|^{2}),
	\end{align*}
	and then we have by  equations \eqref{ec18}  and  \eqref{ec25} that
	\begin{equation}\label{ec29}
		\lim\limits_{n\rightarrow \infty}\mathbb{E}\int_0^{t_0}\big|b(s, X_{k_n(s)}^n, \mu_{X_{k_n(s)}^n})-b(s, X_s, \mu_{X_s})\big|ds=0.
	\end{equation}
	Similarly,
	\begin{equation*}
		\lim\limits_{n\rightarrow \infty}\mathbb{E}\int_0^{t_0}\big|\sigma(s, X_{k_n(s)}^n)-\sigma(s, X_s)\big|^{2}ds=0,
	\end{equation*}	
and moreover,
	\begin{equation}\label{ec31}
		\lim\limits_{n\rightarrow \infty}\mathbb{E}\sup_{0\le t\le {t_0}}\left|\int_0^t\Big(\sigma(s, X_{k_n(s)}^n)-\sigma(s, X_s)\Big)dW_s\right|=0.\nonumber
	\end{equation}	
	Hence, $X$ is a solution to equation \eqref{ec0} on $[0,t_0]$.} 
	\bigskip
	
{\noindent\textbf{Step $5$.}
In this step we prove the uniqueness of the solution on $[0, t_0]$.
	Suppose that both $X$ and $\widetilde{X}$ are solutions to equation \eqref{ec0}, then by the second property in Definition \ref{ecd01} and the BDG inequality, we have \begin{equation}\label{1mom}\mathbb{E}\sup\limits_{0\le s\le t_0}|X_s|<\infty.\end{equation}	
	Denote
	\begin{equation}\label{ec32}
		\tau_N:=\inf\limits_{0\le t\le t_0}\big\{t:|X_t|> N\big\}\land t_0.\nonumber
	\end{equation}		
	By It\^o's formula,
	\begin{equation}\label{ec33}
		\begin{aligned}
			(1+|X_{t\land \tau_N}|^2)^{\frac{p}{2}} &= (1+|\xi|^2)^{\frac{p}{2}}
			+p\int_0^{t\land \tau_N}(1+|X_s|^2)^{\frac{p-2}{2}}X_sb(s, X_s, \mu_{X_s})ds\\
			&\quad+\frac{p}{2}\int_0^{t\land \tau_N}(1+|X_s|^2)^{\frac{p-2}{2}}|\sigma(s, X_s)|^2ds\\
			&\quad+p\int_0^{t\land \tau_N}(1+|X_s|^2)^{\frac{p-2}{2}}X_s\sigma(s, X_s)dB_s\\
			&\quad+\frac{p(p-2)}{2}\int_0^{t\land \tau_N}(1+|X_s|^2)^{\frac{p-4}{2}}|X_s|^2|\sigma(s, X_s^n)|^2ds\\
			&=:(1+|\xi|^2)^{\frac{p}{2}}+\mathcal{I}_{3,1}(t)+\mathcal{I}_{3,2}(t)+\mathcal{I}_{3,3}(t)+\mathcal{I}_{3,4}(t).
		\end{aligned} \nonumber
	\end{equation}	
	It follows from the BDG inequality that
	\begin{equation}\label{ec34}
		\begin{aligned}
			&\mathbb{E}\sup\limits_{0\le s\le  t_0\land \tau_N}|\mathcal{I}_{3,3}(s)|\le \frac{1}{2}\mathbb{E}\sup\limits_{0\le r\le  t_0\land \tau_N}(1+|X_r|^2)^\frac{p}{2}+C_p\mathbb{E}\,\mathcal{I}_{3,2}(t_0\land \tau_N).
		\end{aligned} \nonumber
	\end{equation}
	Under Assumption \ref{assumption1}, by equation \eqref{1mom}, we have
	\begin{equation}\label{ec35}
		\begin{aligned}
			&\hspace{-0.3cm}\mathbb{E}|\mathcal{I}_{3,1}(t)+\mathcal{I}_{3,2}(t)+\mathcal{I}_{3,4}(t)|\\ &\le C_p\mathbb{E}\int_0^t(1+|X_{s\land \tau_N}|^2)^{\frac{p-2}{2}}\Big(1+|X_{s\land \tau_N}|^2+(\mathbb{E}|X_s|)^2\Big) \mathbbm{1}_{\{s\le \tau_N\}}ds\\
			&\le C_p\int_0^t\mathbb{E}(1+|X_{s\land \tau_N}|^2)^{\frac{p}{2}}ds.
		\end{aligned} \nonumber
	\end{equation}
		By Gr\"onwall's Lemma, we have
	\begin{equation}\label{ec37}
		\begin{aligned}
			\mathbb{E}\sup\limits_{0\le s\le t_0\land \tau_N}(1+|X_s|^2)^{\frac{p}{2}}\le C_p(1+\mathbb{E}|\xi|^p),
		\end{aligned} \nonumber
	\end{equation}
which yields by sending $N\to\infty$ that 
		\begin{equation}\label{momentX}
			\mathbb{E}\sup\limits_{0\le s\le t_0}(1+|X_s|^2)^{\frac{p}{2}}\le C_p(1+\mathbb{E}|\xi|^p).
		\end{equation}
	Similarly,
		\begin{equation}\label{momentX1}
			\mathbb{E}\sup\limits_{0\le s\le t_0}(1+|\widetilde{X}_s|^2)^{\frac{p}{2}}\le C_p(1+\mathbb{E}|\xi|^p).
		\end{equation}
	Applying It\^o's formula to $V_{\epsilon, \delta}(X_t-\widetilde{X}_t)$, similar to equation \eqref{ec16},  by equations \eqref{momentX} and \eqref{momentX1} and Chebyshev's inequality,
	we have for $0\leq t\leq t_0$,
		\begin{align*}
			&\mathbb{E}|X_t-\widetilde{X}_t|\\
			&\le \epsilon+\mathbb{E}V_{\epsilon, \delta}(X_t-\widetilde{X}_t)\\
&\le  \epsilon+\mathbb{E}\int_0^t\big(L_R+2c_1\big)\Big[|X_s-\widetilde{X}_s|+\mathbb{E}|X_s-\widetilde{X}_s|\Big]ds
+C\epsilon^{2\alpha}L_Rt\\
&\quad+C\Big(1+\frac1\epsilon\Big)\mathbb{E}\int_0^t
\Big[1+|X_s|^2+|\widetilde{X}_s|^2+2c_1\Big]ds
\mathbbm{1}_{\{\sup_{t\leq  t_0}(|X_t|\vee|\widetilde{X}_t|)>R\}}\\&\le  \epsilon+\mathbb{E}\int_0^t\big(L_R+2c_1\big)\Big[|X_s-\widetilde{X}_s|+\mathbb{E}|X_s-\widetilde{X}_s|\Big]ds\\
&\quad+C\epsilon^{2\alpha}L_Rt+C_{T}R^{-2}\Big(1+\frac1\epsilon\Big)(1+\mathbb{E}|\xi|^4).
		\end{align*}
where $c_1:=\mathbb{E}\sup_{t\leq t_0}(|X_t|\vee|\widetilde{X}_t|)$. 	Using Gr\"onwall's Lemma, we have for $t\in[0, t_0]$,
	\begin{equation}\label{ec44}
		\begin{aligned}
			\mathbb{E}|X_t-\widetilde{X}_t|
			\leq C_{T}(1+R)^{2t}\Bigg[\epsilon^{2\alpha}L_Rt+R^{-2}\Big(1+\frac1\epsilon\Big)(1+\mathbb{E}|\xi|^4)\Bigg].
\end{aligned} \nonumber
\end{equation}
Set  $\epsilon^{2\alpha}=(1+R)^{-2t_0-1}$.  Sending $R\rightarrow\infty$, we have for every $t\in[0,t_0]$,
	\begin{equation}\label{ec45}	\begin{aligned}
		\mathbb{E}|X_t-\widetilde{X}_t|=0.\end{aligned}
	\end{equation}	
	It then follows that equation \eqref{ec0} has a unique strong solution on $[0,t_0]$.}
	\bigskip

	{\noindent\textbf{Step $6$.} By Steps $1-5$, we have proved the well-posedness of equation \eqref{ec0} on $[0,t_0]$. For the case of $t_0<T$, suppose there exists some constant $k>1$ satisfying $k t_0\geq T$. Then we may repeat the arguments in Steps $1-5$ and prove inductively the well-posedness on $[(i-1)t_0, i t_0\wedge T]$ where $i=1, \cdots, k$. Hence, we have proved that equation \eqref{ec0} has a unique strong solution $X$ on $[0,T]$ and it satisfies that
\begin{equation*}
\mathbb{E}\sup_{t\in[0,T]}|X_t|^p\leq C_{p,T}(1+\mathbb{E}|\xi|^p).
\end{equation*}}   
\end{proof}

\section{Well-posedness of the MVSVI}
\label{sec:Well-posedness_MVSVI}
In this section, we prove the existence and uniqueness of solutions  to equation \eqref{eos1} in Theorem \ref{thos1}.
We impose the following conditions.
\begin{assumption}	\label{assumption2}
	{$\psi:\mathbb{R}\to[0, +\infty)$ is a proper, lower semi-continuous, convex function with $\psi(0)=0$ and 
	$0\in \operatorname{Int}(D)$, and $\mathbb{P}$-a.s. $\xi\in \overline{D}$ where $D=\{x: \partial\psi(x)\neq \emptyset\}$. }
\end{assumption}
We first give the definition of the solution to equation \eqref{eos1}.
\begin{definition}
	\label{def:mvsvi}
	A pair of progressively measurable continuous processes $(X, \phi)$ defined on $(\Omega, \mathscr{F}, \{\mathscr{F}_t\}_{t\geq0}, \mathbb{P})$ is called a solution to equation \eqref{eos1}, if $(X, \phi)$ satisfies the following conditions:
	\begin{itemize}
		\item $\mathbb{P}(X_0=\xi)=1$.
				\item $\int_0^t\mathbb{E}|b(s, X_s, \mu_{X_s})|ds+\int_0^t\mathbb{E}|\sigma(s, X_s)|^2ds<\infty$, for any $0<t\le T$.
		\item For any $0<s<t\le T$,
		$$
		X_t=\xi+\int_0^tb(s, X_s, \mu_{X_s})ds+\int_0^t\sigma(s, X_s)dB_s-\phi_t,\quad \mathbb{P}-a.s..$$
		\item For every $t\in[0,T]$, $\mathbb{P}$-a.s.  $X_t\in\overline{D}$, and $\phi$ is of bounded variation satisfying that $\phi(0)=0$, and for any $\varrho\in C([0,T];\overline{D})$ and $0<s<t\le T$,
		\begin{equation}\label{eos995}	
			\int_s^t(\varrho_u-X_u)d\phi_u+\int_s^t\psi(X_u)du\le \int_s^t\psi(\varrho_u)du, ~~~\mathbb{P}-a.s..
		\end{equation}
	\end{itemize}
\end{definition}

To faciliate mathematical derivation, we list the following useful properties.
\begin{remark}\label{ros1}
	Suppose both $(X, \phi)$ and $(X', \phi')$ are solutions of equation \eqref{eos1}, by equation \eqref{eos995}, we have
	$$
	\int_s^t(X_u-X'_u)(d\phi_u-d\phi'_u)\ge 0, ~~~a.s..
	$$
\end{remark}
The following theorem is the main result of this section.
\begin{theorem}\label{thos1}
	{Suppose $\mathbb{E}\big[|\xi|^p+|\psi(\xi)|^2\big]<\infty$ for any $p\geq2$.} Then under Assumptions \ref{assumption1} and \ref{assumption2}, equation \eqref{eos1} has a unique solution $(X, \phi)$. Moreover, there exists a constant $C_{p,T}$ that depends only on $p$ and $T$, such that $$\mathbb{E}\sup\limits_{0\le s\le T}|X_s|^p\le C_{p,T}(1+\mathbb{E}|\xi|^p).$$
\end{theorem}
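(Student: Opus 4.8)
The plan is to combine the Yosida--Moreau approximation of the convex function $\psi$ with the well-posedness result for the plain MVSDE already obtained in Theorem \ref{th1}. For each $n\geq1$, replace the subdifferential term $\partial\psi$ by the (single-valued, $n$-Lipschitz) gradient $\nabla\psi^n$ and consider the penalized McKean--Vlasov equation
\begin{equation*}
X^n_t = \xi + \int_0^t b(s, X^n_s, \mu_{X^n_s})\,ds + \int_0^t \sigma(s, X^n_s)\,dB_s - \int_0^t \nabla\psi^n(X^n_s)\,ds.
\end{equation*}
Since $\nabla\psi^n$ is globally Lipschitz and $\nabla\psi^n(0)=0$, the drift $b(s,x,\mu)-\nabla\psi^n(x)$ still satisfies the hypotheses of Assumption \ref{assumption1} (the extra term is even globally Lipschitz in $x$, uniformly in $\mu$, and of linear growth), so Theorem \ref{th1} gives a unique strong solution $X^n\in\mathbb{S}^p[0,T]$ for every $p\geq2$, with a moment bound $\mathbb{E}\sup_{t\le T}|X^n_t|^p\le C_{p,T}(1+\mathbb{E}|\xi|^p)$ that is uniform in $n$ (because the monotonicity of $\nabla\psi^n$ and $\nabla\psi^n(0)=0$ make the penalization term help rather than hurt the a priori estimate, e.g.\ $X^n_s\nabla\psi^n(X^n_s)\geq0$).

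First I would establish uniform-in-$n$ bounds on the penalization terms. Set $\phi^n_t:=\int_0^t\nabla\psi^n(X^n_s)\,ds$. Applying It\^o's formula to $\psi^n(X^n_t)$ and using the identities in Theorem \ref{d14} (in particular $\psi^n(x)\le\psi(x)$, $\psi^n(J_nx)=\psi^n(x)-\frac1{2n}|\nabla\psi^n(x)|^2$, and convexity giving $\nabla\psi^n(x)\cdot(\text{drift})$ controls), together with the assumption $\mathbb{E}|\psi(\xi)|^2<\infty$, one obtains $\sup_n\mathbb{E}\big[\sup_{t\le T}\psi^n(X^n_t)+\int_0^T|\nabla\psi^n(X^n_s)|^2\,ds\big]<\infty$; this is the standard Yosida--Moreau bookkeeping and also yields that $|\phi^n|_0^T$ has a uniform second moment, hence $\{\phi^n\}$ is bounded in variation. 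Next I would prove that $\{X^n\}$ is Cauchy: writing $Z^{m,n}=X^m-X^n$, apply It\^o to the Yamada--Watanabe function $V_{\epsilon,\delta}(Z^{m,n}_t)$. The drift and diffusion contributions are handled exactly as in Steps 2--3 of the proof of Theorem \ref{th1} (the locally H\"older noise term is absorbed through $V''_{\epsilon,\delta}$ and the truncation $\Omega^{m,n}_R$), while the new penalization contribution $-\int_0^t V'_{\epsilon,\delta}(Z^{m,n}_s)\big(\nabla\psi^m(X^m_s)-\nabla\psi^n(X^n_s)\big)\,ds$ is split via the first inequality of Theorem \ref{d14} into a genuinely negative monotone part and a remainder bounded by $\big(\frac1m+\frac1n\big)\mathbb{E}\int_0^T|\nabla\psi^m(X^m_s)||\nabla\psi^n(X^n_s)|\,ds\to0$ by the uniform $L^2$ bound. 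Running Gr\"onwall on a small interval $[0,t_0]$ (with $t_0$ chosen as in Theorem \ref{th1}) and then iterating over $[(i-1)t_0, it_0\wedge T]$ gives $\mathbb{E}\sup_{t\le T}|Z^{m,n}_t|^2\to0$, so $X^n\to X$ in $\mathbb{S}^2[0,T]$, and Lemma \ref{lc1}-type reasoning upgrades this to $\mathbb{S}^p$ with the claimed moment bound.

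Then I would pass to the limit to identify $(X,\phi)$ as a solution. From the uniform variation bound, along a subsequence $\phi^n\to\phi$ uniformly (using that the processes are tight in $C([0,T];\mathbb{R})$ thanks to $X^n\to X$ and the integral representation $\phi^n_t=\xi-X^n_t+\int_0^tb\,ds+\int_0^t\sigma\,dB_s$), and $\phi$ is of bounded variation with $\phi_0=0$. Convergence of the $b$ and $\sigma$ integrals follows as in Step 4 of Theorem \ref{th1} (convergence of laws in $W_1$ plus the growth/continuity conditions), so $X$ satisfies the stated integral equation with the limit $\phi$. To get $X_t\in\overline D$ and the variational inequality \eqref{eos995}: from $J_nX^n_s\to\Pi_{\overline D}$ and the uniform bound on $\frac1n|\nabla\psi^n(X^n_s)|^2=2(\psi^n(X^n_s)-\psi(J_nX^n_s))$, one shows $X^n_s-J_nX^n_s\to0$ in $L^2$, forcing $X_t\in\overline D$; and for any $\varrho\in C([0,T];\overline D)$, the convexity inequality $(\varrho_u-J_nX^n_u)\nabla\psi^n(X^n_u)+\psi(J_nX^n_u)\le\psi(\varrho_u)$ integrated in $u$, combined with Lemma \ref{intconv} (applied with $\kappa_n=\phi^n$, which have uniformly bounded variation and converge uniformly, and $f_n=\varrho-X^n$ or $\varrho-J_nX^n$), passes to the limit to yield \eqref{eos995}. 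Finally, uniqueness: if $(X,\phi)$ and $(X',\phi')$ are two solutions, It\^o on $V_{\epsilon,\delta}(X_t-X'_t)$ together with Remark \ref{ros1} (which makes the $d\phi-d\phi'$ term nonpositive) reduces the estimate to exactly the computation in Step 5 of Theorem \ref{th1}, giving $X=X'$ and then $\phi=\phi'$.

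I expect the main obstacle to be the limit passage in the variational inequality \eqref{eos995}: one must carefully juggle $X^n$, $J_nX^n$, $\nabla\psi^n(X^n)$ and $\phi^n$ so that the convexity inequality survives the limit, and this is exactly where Lemma \ref{intconv} and the uniform variation/$L^2$ bounds on the penalization are essential. The a priori bound $\sup_n\mathbb{E}\int_0^T|\nabla\psi^n(X^n_s)|^2\,ds<\infty$, which rests on $\mathbb{E}|\psi(\xi)|^2<\infty$ and on correctly handling the interaction between the locally H\"older $\sigma$ and the It\^o correction for $\psi^n$, is the technical heart on which everything else depends.
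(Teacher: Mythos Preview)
Your overall architecture---Yosida--Moreau penalization, Theorem \ref{th1} for each $X^n$, Yamada--Watanabe function for the Cauchy estimate, Lemma \ref{intconv} for the limit in \eqref{eos995}, and monotonicity (Remark \ref{ros1}) for uniqueness---matches the paper exactly. There is, however, one genuine gap.

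\medskip
\textbf{The gap: the uniform $L^2$ bound on $\nabla\psi^n(X^n_\cdot)$.} You assert that ``standard Yosida--Moreau bookkeeping'' gives
\[
\sup_n\,\mathbb{E}\!\int_0^T|\nabla\psi^n(X^n_s)|^2\,ds<\infty.
\]
For a \emph{forward} SDE with multiplicative noise this is not available. Applying It\^o's formula to $\psi^n(X^n_t)$ produces the second--order term $\tfrac12\int_0^T(\psi^n)''(X^n_s)\,\sigma^2(s,X^n_s)\,ds$, and one only knows $0\le(\psi^n)''\le n$; that $n$ cannot be absorbed, so the resulting bound on $\mathbb{E}\int_0^T|\nabla\psi^n|^2$ grows with $n$. (The uniform $L^2$ estimate \emph{does} hold for the backward equation, cf.\ Lemma \ref{lb2}, precisely because there is no It\^o correction in that direction; this is why your intuition feels ``standard''.) Consequently your Cauchy argument, which controls the penalization cross--term by $\big(\tfrac1n+\tfrac1m\big)\mathbb{E}\int_0^T|\nabla\psi^n(X^n_s)|\,|\nabla\psi^m(X^m_s)|\,ds$ via that $L^2$ bound, does not close.

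\medskip
\textbf{How the paper repairs this.} Instead of one uniform $L^2$ estimate, the paper obtains two complementary bounds (Step~2):
\[
\sup_n\,\mathbb{E}\!\left(\int_0^T|\nabla\psi^n(X^n_s)|\,ds\right)^{\!p}<\infty
\quad\text{(uniform in $n$, any $p$),}
\]
which follows from \eqref{eos997} and the $p$-moment bound \eqref{eos18}, together with the \emph{non-uniform} pointwise estimate
\[
\mathbb{E}\sup_{s\le T}|\nabla\psi^n(X^n_s)|^4\le C\big(n^{7/2}+n^2\mathbb{E}|\psi(\xi)|^2\big),
\]
obtained by applying It\^o to $|\psi^n(X^n_t)|^2$ and tracking the $n$'s honestly. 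In the Cauchy step (paper's Step~3) these are combined via H\"older:
\[
\frac{1}{n}\,\mathbb{E}\!\int_0^T|\nabla\psi^n(X^n_s)|\,|\nabla\psi^m(X^m_s)|\,ds
\le \frac{1}{n}\Big(\mathbb{E}\sup_s|\nabla\psi^n|^4\Big)^{1/4}
\Big(\mathbb{E}\Big(\int_0^T|\nabla\psi^m|\,ds\Big)^{2}\Big)^{1/2}
\le C\,n^{-1/8},
\]
so the extra $\tfrac{\delta}{\epsilon}$ coming from $|V'_{\epsilon,\delta}(x)/x|\le\delta/\epsilon$ is beaten by choosing $\epsilon$ as a suitable negative power of $R$. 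This is the technical heart you were looking for, and it is the reason the hypothesis $\mathbb{E}|\psi(\xi)|^2<\infty$ enters.

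\medskip
\textbf{A minor strategic difference.} You propose iterating on small intervals $[0,t_0]$ as in Theorem \ref{th1}. The paper instead works directly on $[0,T]$: because here $\mathbb{E}|\xi|^p<\infty$ for \emph{all} $p\ge2$, one can take the truncation exponent $q$ arbitrarily large and kill the $R$-powers arising from Gr\"onwall in one shot (see the choice of $q$ at the end of Step~4). Your iteration would also work, but is unnecessary under the present moment hypothesis.
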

\begin{proof}
	The proof is proceed in the following $6$ steps, using the properties of the Yosida-Moreau approximation function (Theorem \ref{d14}).
	\bigskip
	
	\noindent\textbf{Step $1$.} We first consider an approximating stochastic process $X_t^n$ evolving according to the following SDE:
	\begin{equation}\label{eos3}
		\begin{aligned}
			X_t^n = \xi+\int_0^tb(s, X_s^n, \mu_{X_s^n})ds+\int_0^t\sigma(s, X_s^n)dB_s-\int_0^t\nabla\psi^n(X_s^n)ds,
		\end{aligned}
	\end{equation}
	where $\psi^n(x)$ is the Yosida-Moreau approximation of $\psi(x)$.
	That is, in equation \eqref{eos3}, we replace the subdifferential $\partial\psi$ in equation \eqref{eos1} with the gradient $\nabla\psi^n$.
	For every $n\geq1$, according to Theorems \ref{d14} and \ref{th1}, equation \eqref{eos3} has a unique strong solution denoted by $X^n$, which satisfies $$\mathbb{E}\sup_{0\leq s\leq T}|X_s^n|^p\leq C_{n,p,T}(1+\mathbb{E}|\xi|^p).$$
	Now we aim to show that $C_{n,p,T}$ in the above equation is in fact independent of $n$.
	Under Assumptions \ref{assumption1} and \ref{assumption2}, by It\^o's formula,
	\begin{align}\label{eos4}
		|X_t^n|^2 &= |\xi|^2+2\int_0^tX_s^nb(s, X_s^n, \mu_{X_s^n})ds+2\int_0^tX_s^n\sigma(s, X_s^n)dB_s\nonumber\\
		&\quad-2\int_0^tX_s^n\nabla\psi^n(X_s^n)ds+\int_0^t|\sigma(s, X_s^n)|^2ds\nonumber\\
		&\le |\xi|^2+C\int_0^t\Big(1+|X_s^n|^2+(\mathbb{E}|X_s^n|)^2\Big)ds+2\int_0^tX_s^n\sigma(s, X_s^n)dB_s\nonumber\\
		&\quad-2\int_0^tX_s^n\nabla\psi^n(X_s^n)ds\nonumber\\
		&=:|\xi|^2+\mathcal{J}_{1,1}(t)+\mathcal{J}_{1,2}(t)+\mathcal{J}_{1,3}(t).
	\end{align}
	By the BDG inequality and Young's inequality, under Assumptions \ref{assumption1} and \ref{assumption2},
	\begin{equation}\label{eos5}
		\begin{aligned}
			\mathbb{E}\sup_{0\le s\le t}\mathcal{J}_{1,2}(s)
			&\le C\mathbb{E}\left(\int_0^t|X_s^n|^2|\sigma(s, X_s^n)|^2ds\right)^\frac{1}{2}\\
			&\le \frac{1}{2}\mathbb{E}\sup_{0\le s\le t}|X_s^n|^2+C\int_0^t\mathbb{E}(1+|X_s^n|^2)ds.
		\end{aligned}\nonumber
	\end{equation}
	Since $\nabla\psi^n (x)$ is monotonically increasing, we know from Assumption \ref{assumption1} and $\psi(0)=0$ that $$\mathcal{J}_{1,3}(t)=-2\int_0^tX_s^n\nabla\psi^n(X_s^n)ds\leq 0,$$ and then
	\begin{equation}\label{eos7}
		\begin{aligned}
			\mathbb{E}\sup_{0\le s\le t}|X_s^n|^2\le C(1+\mathbb{E}|\xi|^2)+C\int_0^t\sup_{0\le r\le s}\mathbb{E}|X_r^n|^2ds.
		\end{aligned}\nonumber
	\end{equation}
	Then by Gr\"onwall's lemma
	\begin{equation}\label{eos8}
		\begin{aligned}
			\sup_n\mathbb{E}\sup_{0\le s\le t}|X_s^n|^2\le C(1+\mathbb{E}|\xi|^2).
		\end{aligned}\nonumber
	\end{equation}
	Again by It\^o's formula, for $p>2$,
	\begin{align}\label{eos9}
		(1+|X_t^n|^2)^\frac{p}{2}
		&=(1+|\xi|^2)^\frac{p}{2}+p\int_0^t(1+|X_s^n|^2)^\frac{p-2}{2}X_s^nb(s, X_s^n, \mu_{X_s^n})ds\nonumber\\
		&\quad+\frac{p}{2}\int_0^t(1+|X_s^n|^2)^\frac{p-2}{2}|\sigma(s, X_s^n)|^2ds\nonumber\\
		&\quad+p\int_0^tX_s^n(1+|X_s^n|^2)^\frac{p-2}{2}\sigma(s, X_s^n)dB_s\nonumber\\
		&\quad-p\int_0^t(1+|X_s^n|^2)^\frac{p-2}{2}X_s^n\nabla\psi^n(X_s^n)ds\nonumber\\
		&\quad +\frac{p(p-2)}{2}\int_0^t(1+|X_s|^2)^{\frac{p-4}{2}}|X_s|^2|\sigma(s, X_{s})|^2ds\nonumber\\
		&=:(1+|\xi|^2)^\frac{p}{2}+\mathcal{J}_{2,1}(t)+\mathcal{J}_{2,2}(t)+\mathcal{J}_{2,3}(t)+\mathcal{J}_{2,4}(t)+\mathcal{J}_{2,5}(t).
	\end{align}
	Under Assumptions \ref{assumption1} and \ref{assumption2}, by the BDG inequality and Young's inequality \eqref{eqn:young},
	we have
	\begin{align}\label{eos10}
		&\mathbb{E}\sup_{0\le s\le t}\Big|\mathcal{J}_{2,1}(s)+\mathcal{J}_{2,2}(s)+\mathcal{J}_{2,3}(s)+\mathcal{J}_{2,5}(s)\Big|\nonumber\\
		&\le C_p\mathbb{E}\int_0^t(1+|X_s^n|^2)^\frac{p-2}{2}\Big(1+|X_s^n|^2+(\mathbb{E}|X_s^n|)^2\Big)ds+\frac{1}{2}\mathbb{E}\sup_{0\le s\le t}(1+|X_s^n|^2)^\frac{p}{2}\nonumber\\
		&\le  C_p\mathbb{E}\int_0^t(1+|X_s^n|^2)^\frac{p}{2}ds+C_p\int_0^t(\mathbb{E}|X_s^n|)^pds+
		+\frac{1}{2}\mathbb{E}\sup_{0\le s\le t}(1+|X_s^n|^2)^\frac{p}{2}.
	\end{align}
	Since $x\nabla\psi^n(x)\geq 0$, we have $$\mathcal{J}_{2,4}(t)=-p\int_0^t(1+|X_s^n|^2)^\frac{p-2}{2}X_s^n\nabla\psi^n(X_s^n)dt\le 0.$$
	Then plugging the above equation and equation \eqref{eos10} into equation \eqref{eos9}, by Young's inequality \eqref{eqn:young}, we have
	\begin{equation}\label{eos17}
		\begin{aligned}	
			\mathbb{E}\sup_{0\le s\le t}(1+|X_s^n|^2)^\frac{p}{2}
			&\le  C_p\mathbb{E}(1+|\xi|^2)^\frac{p}{2}+C_p\int_0^t\mathbb{E}\sup_{0\le r\le s}(1+|X_r^n|^2)^\frac{p}{2}ds.
		\end{aligned}\nonumber
	\end{equation}
	Gr\"onwall's Lemma yields that there exsits a constant $C_{p,T}>0$ independent of $n$ satisfying that
	\begin{equation}\label{eos18}
		\sup_{n}\mathbb{E}\sup_{0\le s\le T}|X_s^n|^{p}\le C_{p,T}\mathbb{E}(1+|\xi|^p).
	\end{equation}	
\smallskip
	
	\noindent\textbf{Step $2$.}  In this step, we will prove that $$\mathbb{E}\sup\limits_{0\leq s\leq T}|\nabla\psi^n(X_s^n)|^4\leq Cn^\frac{7}{4}.$$
	Since $\psi^n(x)$ is convex and  $0\in\mathrm{Int} (D)$, there exists some $a_0>0$ such  that $\{a: |a|\le a_0\}\subset \operatorname{Int}(D)$. Then, for any $x\in \mathbb{R}$, we have
	\begin{equation*}
		(a-x)\nabla\psi^n(x)\le \psi^n(a)-\psi^n(x)\le \psi^n(a)\le \psi(a).
	\end{equation*}
	Setting $M=\sup\limits_{|a|\le a_0}\psi(a)$, then by the inequality above, we have
	\begin{equation}\label{eos997}
		a_0|\nabla\psi^n(x)|\le x\nabla\psi^n(x)+M, \hspace{2em} \forall x\in \mathbb{R}.
	\end{equation}
	Using equations \eqref{eos4} and \eqref{eos997}, we obtain
	\begin{align*}
		&\hspace{-0.3cm}\left(a_0\int_0^t|\nabla\psi^n(X_s^n)|ds\right)^p\\
		&\le 2^{p-1}\left|\int_0^tX_s^n\nabla\psi^n(X_s^n)ds\right|^p+2^{p-1}M^Pt^p\\
		&\le 2^{p-1}|\xi|^{2p}+2^{p-1}\left|\int_0^tX_s^nb(s, X_s^n, \mu_{X_s^n})ds\right|^p+2^{p-1}\left|\int_0^tX_s^n\sigma(s, X_s^n)dB_s\right|^p\\
		&\quad+2^{p-1}\left|\int_0^t|\sigma(s, X_s^n)|^2ds\right|^p+2^{p-1}M^Pt^p+2^{p-1}|X_t^n|^{2p}.
	\end{align*}
	Therefore, under Assumptions \ref{assumption1} and \ref{assumption2}, by the BDG inequality and equation \eqref{eos18}, we obtain
	\begin{equation}\label{eos19}
		\sup_{n}\mathbb{E}\left(\int_0^T|\nabla\psi^n(X_s^n)|ds\right)^p\le C_{p,T,M,a_0}(1+\mathbb{E}|\xi|^p).
	\end{equation}
	
	Note that $\psi^n\geq 0$. Using It\^o's formula and the properties of the Yosida-Moreau approximation function, we have
	\begin{align}\label{eqn:step2}
		&|\psi^n(X_t^n)|^2\nonumber\\
		&\le|\psi^n(\xi)|^2+2\int_0^t\psi^n(X_s^n)\nabla\psi^n(X_s^n)b(s, X_s^n, \mu_{X_s^n})ds\nonumber\\
		&\quad+2\int_0^t\psi^n(X_s^n)\nabla\psi^n(X_s^n)\sigma(s, X_s^n)dB_s+\int_0^t|\nabla\psi^n(X_s^n)|^2|\sigma(s, X_s^n)|^2ds\nonumber\\
		&\quad+n\int_0^t\psi^n(X_s^n)|\sigma(s, X_s^n)|^2ds-2\int_0^t\psi^n(X_s^n)|\nabla\psi^n(X_s^n)|^2ds\nonumber\\
		&\le |\psi^n(\xi)|^2+2n\int_0^t\psi^n(X_s^n)|X_s^n||b(s, X_s^n, \mu_{X_s^n})|ds\nonumber\\
		&\quad+2\int_0^t\psi^n(X_s^n)\nabla\psi^n(X_s^n)\sigma(s, X_s^n)dB_s+3n\int_0^t\psi^n(X_s^n)|\sigma(s, X_s^n)|^2ds\nonumber\\
		&\quad-2\int_0^t\psi^n(X_s^n)|\nabla\psi^n(X_s^n)|^2ds\nonumber\\
		&=:|\psi^n(\xi)|^2+\mathcal{J}_{3,1}(t)+\mathcal{J}_{3,2}(t)+\mathcal{J}_{3,3}(t)+\mathcal{J}_{3,4}(t).
	\end{align}
	By the BDG inequality,
	\begin{align}	\label{eos21}
		\mathbb{E}\sup_{0\le s\le t}\mathcal{J}_{3,2}(s)
		&\le 2\mathbb{E}\left(\int_0^t\Big|\psi^n(X_s^n)\nabla\psi^n(X_s^n)\sigma(s, X_s^n)\Big|^2ds\right)^\frac{1}{2}\nonumber\\
		&\le \frac{1}{2}\mathbb{E}\sup_{0\le s\le t}|\psi^n(X_s^n)|^2+4n\mathbb{E}\int_0^t|\psi^n(X_s^n)||\sigma(s, X_s^n)|^2ds\nonumber\\
		&= \frac{1}{2}\mathbb{E}\sup_{0\le s\le t}|\psi^n(X_s^n)|^2+\frac43\mathbb{E}\,\mathcal{J}_{3,3}(t).		
	\end{align}
	By the inequality that $|\psi^n(X_s^n)|\le |\nabla\psi^n(X_s^n)||X_s^n|$ and Young's inequality that $ab\le \eta a^3+C_{\eta}b^\frac{3}{2},$ under Assumptions \ref{assumption1} and \ref{assumption2}, we have
	\begin{align*}
		&\mathcal{J}_{3,1}(t)+\frac73\mathcal{J}_{3,3}(t) \nonumber\\
		&\le Cn\int_0^t|\psi^n(X_s^n)|^\frac{1}{3}|\nabla\psi^n(X_s^n)|^\frac{2}{3}|X_s^n|^\frac{2}{3}\Big(|X_s^n||b(s, X_s^n, \mu_{X_s^n})|+|\sigma(s, X_s^n)|^2\Big)ds\nonumber\\
		&\le  \int_0^t\psi^n(X_s^n)|\nabla\psi^n(X_s^n)|^2ds+C_{1}n^\frac{3}{2}\int_0^t|X_s^n|\Big(1+|X_s^n|^2+(\mathbb{E}|X_s^n|)^2\Big)^\frac{3}{2}ds.
	\end{align*}
	Plugging the above equation and equation \eqref{eos21} into equation \eqref{eqn:step2}, reorganizing the terms, it follows from  equation \eqref{eos18} that
	\begin{equation}\label{eos23}
		\begin{aligned}	
			\mathbb{E}\sup_{0\le s\le t}|\psi^n(X_s^n)|^2+\mathbb{E}\int_0^t\psi^n(X_s^n)|\nabla\psi^n(X_s^n)|^2ds
			&\le C\mathbb{E}|\psi^n(\xi)|^2+Cn^\frac{3}{2}\\
			&\le C(\mathbb{E}|\psi(\xi)|^2+n^\frac{3}{2}).
		\end{aligned}\nonumber
	\end{equation}
	Therefore,
	\begin{equation}\label{eos24}
		\mathbb{E}\sup_{0\le s\le T}|\psi^n(X_s^n)|^2\le C(\mathbb{E}|\psi(\xi)|^2+n^\frac{3}{2}).\nonumber
	\end{equation}
	Using the properties of the Yosida-Moreau approximation function, we can further conclude that
	\begin{equation}\label{eos25}
		\mathbb{E}\sup_{0\le s\le T}|\nabla\psi^n(X_s^n)|^4\le4n^2\mathbb{E}\sup_{0\le s\le T}|\psi^n(X_s^n)|^2\le C(n^\frac{7}{2}+n^2\mathbb{E}|\psi(\xi)|^2).
	\end{equation}
	\smallskip
	
	\noindent\textbf{Step $3$.} In this step, we will use the Yamada-Watanabe function $V_{\epsilon, \delta}(x)$ and Theorem \ref{thm:Properties_of_YW} to obtain $\mathbb{E}|X_t^{n,m}|$,
	where
	$$X_t^{n,m}:=X_t^n-X_t^m.$$
	 We have the following equation generated by It\^o's formula:
	\begin{align*}	
		|X_t^{n,m}|&\le V_{\epsilon, \delta}(X_t^{n,m})+\epsilon\\
		&\le \int_0^t V_{\epsilon, \delta}'(X_s^{n,m})\Big(b(s, X_s^n, \mu_{X_s^n})-b(s, X_s^m, \mu_{X_s^m})\Big)ds\\
		&\quad+\frac{1}{2}\int_0^tV_{\epsilon, \delta}^{''}(X_s^{n,m})\Big|\sigma(s, X_s^n)-\sigma(s, X_s^m)\Big|^2ds\\
		&\quad+ \int_0^t V_{\epsilon, \delta}'(X_s^{n,m})\Big(\sigma(s, X_s^n)-\sigma(s, X_s^m)\Big)dB_s\\
		&\quad-\int_0^t V_{\epsilon, \delta}'(X_s^{n,m})\Big(\nabla\psi^n(X_s^n)-\nabla\psi^m(X_s^m)\Big)ds+\epsilon\\
		&=: \mathcal{J}_{4,1}(t)+\mathcal{J}_{4,2}(t)+\mathcal{J}_{4,3}(t)+\mathcal{J}_{4,4}(t)+\epsilon.
	\end{align*}
	Set $$\Omega^{n, m}:=\left\{\omega\in \Omega: \sup\limits_{0\le s\le T}|X_s^n(\omega)|\vee\sup\limits_{0\le s\le T}|X_s^m(\omega)|>R\right\}.$$
	Denote for $p>0$,
	\begin{equation}\label{eos27}
	{c_{p}}:=\sup_n\mathbb{E}\sup_{t\leq T}|X^n_t|^p.
	\end{equation}
By equation \eqref{eos18} and Chebyshev's inequality,
	under Assumptions \ref{assumption1} and \ref{assumption2}, 
	\begin{equation}\label{eos28}
		\begin{aligned}	
			\mathbb{E}|\mathcal{J}_{4,1}(t)|&\le \mathbb{E}\int_0^t\big(L_R+c_1\big)\big[|X_s^{n,m}|+\mathbb{E}|X_s^{n,m}|\big]ds\\
			&\quad+C\mathbb{E}\int_0^t\Big(1+|X_s^n|+|X_s^m|+\mathbb{E}|X_s^n|+\mathbb{E}|X_s^m|\Big)\mathbbm{1}_{\Omega^{n, m}}ds\\
			&\le 2\int_0^t\big(L_R+c_1\big)\mathbb{E}|X_s^{n,m}|ds+\frac{C_{T}(1+c_2)^{1/2}c_q^{1/2}}{R^{q/2}},
		\end{aligned}\nonumber
	\end{equation}	
where $q>2$. 
	Under Assumptions \ref{assumption1} and \ref{assumption2}, using the properties of the Yamada-Watanabe function, and by equation \eqref{eos18}, we can conclude that
	\begin{equation}\label{eos29}
		\begin{aligned}	
			\mathbb{E}|\mathcal{J}_{4,2}(t)|&\le \frac{\epsilon^{2\alpha}}{\ln\delta}L_R^2t
			+\frac{C}{\epsilon}\mathbb{E}\int_0^t\Big(1+|X_s^n|^2+|X_s^m|^2\Big)\mathbbm{1}_{\Omega^{n, m}}ds\\
			&\le\frac{\epsilon^{2\alpha}}{\ln\delta} L_R^2 t+\frac{C_T}{\epsilon}\frac{(1+c_4)^{1/2}c_q^{1/2}}{R^{q/2}}.
		\end{aligned}\nonumber
	\end{equation}
	By equation \eqref{eos18}, we know that $\mathcal{J}_{4,3}(t)$ is a martingale, and hence
	$$\mathbb{E}\,\mathcal{J}_{4,3}(t)= \mathbb{E}\int_0^t V_{\epsilon, \delta}'(X_s^{n,m})\Big(\sigma(s, X_s^n)-\sigma(s, X_s^m)\Big)dB_s=0.$$
	Using the properties of the Yamada-Watanabe function in Theorem \ref{thm:Properties_of_YW}, the properties of the Yosida-Moreau approximation function, H\"older's inequality, and Jensen's inequality, we obtain
	\begin{align*}	
		\mathbb{E}\,\mathcal{J}_{4,4}(t)
		&=-\mathbb{E}\int_0^t V_{\epsilon, \delta}'(X_s^{n,m})\Big(\nabla\psi^n(X_s^n)-\nabla\psi^m(X_s^m)\Big)ds\\
		&=-\mathbb{E}\int_0^TV_{\epsilon, \delta}'(|X_s^{n,m}|)\frac{X_s^{n,m}}{|X_s^{n,m}|}\Big(\nabla\psi^n(X_s^n)-\nabla\psi^m(X_s^m)\Big)ds\\
		&\le C\mathbb{E}\int_0^t\frac{\delta}{\epsilon}\left(\frac{1}{n}+\frac{1}{m}\right)|\nabla\psi^n(X_s^n)||\nabla\psi^m(X_s^m)|ds\\
		&\le \frac{C\delta}{\epsilon}\Bigg[\frac{1}{n}\Big(\mathbb{E}\sup_{0\le s\le T}|\nabla\psi^n(X_s^n)|^2\Big)^\frac{1}{2}\Big(\mathbb{E}\Big(\int_0^T|\nabla\psi^m(X_s^m)|ds\Big)^2\Big)^\frac{1}{2}\\
		&\qquad\quad+\frac{1}{m}\Big(\mathbb{E}\sup_{0\le s\le T}|\nabla\psi^m(X_s^m)|^2\Big)^\frac{1}{2}\Big(\mathbb{E}\Big(\int_0^T|\nabla\psi^n(X_s^n)|ds\Big)^2\Big)^\frac{1}{2}\Bigg]\\
		&\le \frac{C\delta}{\epsilon}\Bigg[\frac{1}{n}\Big(\mathbb{E}\sup_{0\le s\le T}|\nabla\psi^n(X_s^n)|^4\Big)^\frac{1}{4}\mathbb{E}\Big(\int_0^T|\nabla\psi^m(X_s^m)|ds\Big)^2\Big)^\frac{1}{2}\\
		&\qquad\quad+\frac{1}{m}\Big(\mathbb{E}\sup_{0\le s\le T}|\nabla\psi^m(X_s^m)|^4\Big)^\frac{1}{4}\Big(\mathbb{E}\Big(\int_0^T|\nabla\psi^n(X_s^n)|ds\Big)^2\Big)^\frac{1}{2}\Bigg]\\
		&\le \frac{C\delta}{\epsilon}\left(n^{-\frac{1}{8}}+m^{-\frac{1}{8}}\right).
	\end{align*}
	Taking $\delta=2$,	\begin{equation}\label{eos31}
		\begin{aligned}	
			\mathbb{E}|X_t^{n,m}|&\le \epsilon+2\int_0^t\big(L_R+c_1\big)\mathbb{E}|X_s^{n,m}|ds+C_{T,q}\frac{R^{-{q/2}}}{\epsilon}\\
			&\quad+C_1t{L_R}{\epsilon}^{2\alpha }+C_1\frac1{\epsilon}\left(n^{-\frac{1}{8}}+m^{-\frac{1}{8}}\right).
		\end{aligned}\nonumber
	\end{equation}	
	By Gr\"onwall's Lemma,  we obtain
	\begin{equation}\label{eos42}
		\begin{aligned}	
			\mathbb{E}|X_t^{n,m}|\le C_{T,q}(1+R)^{2t}\Bigg[\frac{R^{-{q/2}}}{\epsilon}+{L_R}{\epsilon}^{2\alpha }+\frac1{\epsilon}\left(n^{-\frac{1}{8}}+m^{-\frac{1}{8}}\right)\Bigg].
		\end{aligned}
	\end{equation}
	\smallskip
	
	\noindent\textbf{Step $4$.} 	In this step, we will prove that $\{X^n\}$ is a Cauchy sequence in $\mathbb{S}^p[0,T]$. Let $0<\theta<1$. By It\^o's formula
	\begin{align*}	
		|X_t^{n,m}|^{p}
		&= p\int_0^t|X_s^{n,m}|^{p-2}X_s^{n,m}\Big(b(s, X_s^n, \mu_{X_s^n})-b(s, X_s^m, \mu_{X_s^m})\Big)ds\\
		&\quad+\frac{p(p-1)}{2}\int_0^t|X_s^{n,m}|^{p-2}\Big|\sigma(s, X_s^n)-\sigma(s, X_s^m)\Big|^2ds\\
		&\quad+p\int_0^t|X_s^{n,m}|^{p-2}X_s^{n,m}\Big(\sigma(s, X_s^n)-\sigma(s, X_s^m)\Big)dB_s\\
		&\quad-p\int_0^|X_s^{n,m}|^{p-2}X_s^{n,m}\Big(\nabla\psi^n(X_s^n)-\nabla\psi^m(X_s^m)\Big)ds\\
				&=\mathcal{J}_{5,1}(t)+\mathcal{J}_{5,2}(t)+\mathcal{J}_{5,3}(t)+\mathcal{J}_{5,4}(t).
	\end{align*}
	Under Assumptions \ref{assumption1} and \ref{assumption2}, by Young's inequality \eqref{eqn:young}, we have
	\begin{align*}	
		\mathcal{J}_{5,1}(t)
		&\le p\int_0^t|X_s^{n,m}|^{p-2}\big(L_R+c_1\big)\Big[|X_s^{n,m}|^2+|X_s^{n,m}|\mathbb{E}|X_s^{n,m}|\Big]ds\\
		&\quad+C p\int_0^t|X_s^{n,m}|^{p-1}\Big(2+|X_s^n|+|X_s^m|+\mathbb{E}|X_s^n|+\mathbb{E}|X_s^m|\Big)\mathbbm{1}_{\Omega^{n, m}}ds\\
		&\le (2p-1)\big(L_R+c_1\big)\int_0^t|X_s^{n,m}|^{p}ds+\big(L_R+c_1\big)\int_0^t(\mathbb{E}|X_s^{n,m}|)^pds\\
		&\quad+C_p\int_0^t\Big(2+|X_s^n|+|X_s^m|+\mathbb{E}|X_s^n|+\mathbb{E}|X_s^m|\Big)^p ds\mathbbm{1}_{\Omega^{n, m}}.
	\end{align*}
	Then  by equation \eqref{eos18} and H\"older's inequality,
	\begin{equation}\label{eos35}
		\begin{aligned}	
			\mathbb{E}\sup_{t\leq T}|\mathcal{J}_{5,1}(t)|
			&\le (2p-1)\big(L_R+c_1\big)\int_0^T\mathbb{E}|X_s^{n,m}|^{p}ds\\
&\quad+\big(L_R+c_1\big)\int_0^t(\mathbb{E}|X_s^{n,m}|)^pds+\frac{C_{p,q,T}}{R^{q/2}}.
		\end{aligned}\nonumber
	\end{equation}
	By the BDG inequality,
	\begin{equation}\label{eos36}
		\begin{aligned}	
			\mathbb{E}\sup_{0\le s\le t}|\mathcal{J}_{5,3}(s)|&\le \frac{1}{2}\mathbb{E}\sup_{0\le s\le t}|X_s^{n,m}|^{p}+C_p\mathbb{E}\,\mathcal{J}_{5,2}(t).
		\end{aligned}\nonumber
	\end{equation}	
	Under Assumptions \ref{assumption1} and \ref{assumption2}, by Young inequalities \eqref{eqn:young} and \eqref{eqn:young2}, we have
	\begin{align*}	
		\mathbb{E}\sup_{t\leq T}\mathcal{J}_{5,2}(t)
		&=\frac{p(p-1)}{2}\mathbb{E}\int_0^T|X_s^{n,m}|^{p-2}\Big|\sigma(s, X_s^n)-\sigma(s, X_s^m)\Big|^2ds\\
				&\le \frac{p(p-1)}{2}L_R\mathbb{E}\int_0^T|X_s^{n,m}|^{p-1+2\alpha}ds\\
		&\quad+C_p\mathbb{E}\int_0^T|X_s^{n,m}|^{p-2}\Big(1+|X_s^n|^2+|X_s^m|^2\Big)\mathbbm{1}_{\Omega^{n, m}}ds\\
		&\le \frac{p(p-1)}{2}L_R\int_0^T\mathbb{E}|X_s^{n,m}|^{p}ds+C_pL_R\int_0^T\mathbb{E}|X_s^{n,m}|ds+\frac{C_{p,q,T}}{R^{q/2}}.
	\end{align*}
	By properties of the Yosida-Moreau approximation function, {for any $t\in[0,T]$,
\begin{align*}	
\mathcal{J}_{5,4}(t)&=-p\int_0^t |X_s^{n,m}|^{p-2}X_s^{n,m}\Big(\nabla\psi^n(X_s^n)-\nabla\psi^m(X_s^m)\Big)ds\\
&\le p\int_0^t |X_s^{n,m}|^{p-2}\left(\frac{1}{n}+\frac{1}{m}\right)|\nabla\psi^n(X_s^n)||\nabla\psi^m(X_s^m)|ds.
\end{align*}
Then by H\"older's inequality and Jensen's inequality, we obtain
	\begin{align*}	
		&\mathbb{E}\sup_{t\leq T}\mathcal{J}_{5,4}(t)\\
		&\le C_p\mathbb{E}\int_0^T|X_s^{n,m}|^{p-2}\left(\frac{1}{n}+\frac{1}{m}\right)\nabla\psi^n(X_s^n)\nabla\psi^m(X_s^m)ds\\
	&\le \frac{C_p}{n}\left(\mathbb{E}\sup_{0\le s\le T}|\nabla\psi^n(X_s^n)|^4\right)^\frac{1}{4}\left(\mathbb{E}\left(\int_0^T|X_s^{n,m}|^{p-2}|\nabla\psi^m(X_s^m)|ds\right)^2\right)^\frac{1}{2}\\
		&\quad+\frac{C_p}{m}\left(\mathbb{E}\sup_{0\le s\le T}|\nabla\psi^m(X_s^m)|^4\right)^\frac{1}{4}\left(\mathbb{E}\left(\int_0^T|X_s^{n,m}|^{p-2}|\nabla\psi^n(X_s^n)|ds\right)^2\right)^\frac{1}{2}\\
&\le C_{p,T}\left(n^{-\frac{1}{8}}+m^{-\frac{1}{8}}\right),
	\end{align*}}
	where, the last inequality holds, since we have by applying equations \eqref{eos18} and \eqref{eos19} and H\"older's inequality that
	\begin{equation}\label{eos39}
		\begin{aligned}	
			&\mathbb{E}\left(\int_0^T|X_s^{n,m}|^{p-2}|\nabla\psi^m(X_s^m)|ds\right)^2	\\
			&\le C_p\sqrt{\mathbb{E}\sup_{0\le s\le T}\Big(1+|X_s^n|^{4p-8}+|X_s^m|^{4p-8}\Big)}\sqrt{\mathbb{E}\left(\int_0^t|\nabla\psi^m(X_s^m)|ds\right)^4}\\
			&\le C_{p,T}.
		\end{aligned}\nonumber
	\end{equation}	
	Hence,  by equation \eqref{eos42},
	\begin{align*}	
		\mathbb{E}\sup_{0\le s\le t}|X_s^{n,m}|^{p}&\le c'{p^2}\int_0^t\big(L_R+c_1\big)\mathbb{E}|X_s^{n,m}|^{p}ds\\
		&\quad+C_pL_R\int_0^t\Big[\mathbb{E}|X_s^{n,m}|+(\mathbb{E}|X_s^{n,m}|)^p\Big]ds\\
		&\quad+\frac{C_{T,p,q}}{R^{q/2}}+C_{p,T}\left(n^{-\frac{1}{8}}+m^{-\frac{1}{8}}\right).
	\end{align*}
Take $\epsilon=(1+R)^{-l_0}$ with $l_0:=\frac{(c'p^2+2)T+2}{2\alpha}$. By Gr\"onwall's Lemma and sending $\theta \rightarrow 0$, we obtain
\begin{align*}	
\mathbb{E}\sup_{0\le s\le t}|X_s^{n,m}|^{p}&\le C_{p,q,T}\Bigg[L_R(1+R)^{l_0+2t+c'p^2t}\left(R^{-\frac{q}{2}}+n^{-\frac{1}{8}}+m^{-\frac{1}{8}}\right)+\frac{L_R^2}{R^2}\Bigg]\\
		&\quad+C_{q,T}\left(R^{-\frac{q}{2}}+n^{-\frac{1}{8}}+m^{-\frac{1}{8}}\right).
	\end{align*}
Choosing $q$ sufficiently large (for example, take $q$ satisfying  $R^{\frac{q}{2}-1}>(1+R)^{l_0+2T+c'p^2T}$), so that by letting $n, m\to\infty$ first and then $R\to\infty$ we have
	\begin{align}	\label{eos45}
		\lim_{n,m\to\infty}\mathbb{E}\sup_{0\le s\le T}|X_s^{n,m}|^p=0.
	\end{align}
Hence, $\{X^n\}$ is a Cauchy sequence on $\mathbb{S}^p[0, T]$.
	\bigskip

	\noindent\textbf{Step $5$.} 	
	Next, we prove the existence of a solution $(X, \phi)$ to equation \eqref{eos1}. Since ${X^n}$ is a Cauchy sequence on $\mathbb{S}^p[0, T]$, there exists ${X}\in\mathbb{S}^p[0, T]$ such that
	\begin{equation}\label{pconX}
		\begin{aligned}	\lim_{n\rightarrow \infty}\mathbb{E}\sup_{0\le s\le T}|X_s^n-X_s|^p=0\quad\text{and}\quad \mathbb{E}\sup_{0\le s\le T}|X_t|^p\leq C_p(1+\mathbb{E}|\xi|^p).
		\end{aligned}
	\end{equation}
	Then, by the properties of the Wasserstein distance,
	\begin{equation}\label{eos46}
		\begin{aligned}	
			\lim\limits_{n\rightarrow \infty}\sup_{0\le s\le T}W_p(\mu_{X_t^n}, \mu_{X_t})\le \lim\limits_{n\rightarrow \infty} \Big(\mathbb{E}\sup_{0\le s\le T}|X_s^n-X_s|^p\Big)^\frac{1}{p}=0.	
		\end{aligned}\nonumber
	\end{equation}
	By H\"older's inequality  and equation \eqref{eos18},  for any  $p>0$, we have
	\begin{align}\label{eos48}	
		&\mathbb{E}\int_0^T\big|b(s, X_s^n, \mu_{X_s^n})-b(s, X_s, \mu_{X_s})\Big|^pds\nonumber\\
		&\le C_p\Big(L_R+\mathbb{E}|X_s^n|+\mathbb{E}|X_s|\Big)^p\mathbb{E}\int_0^T\Big(|X_s^n-X_s|+\mathbb{E}|X_s^n-X_s|\Big)^pds\nonumber\\
		&\quad +C_p\mathbb{E}\Bigg[\int_0^T\Big(1+|X_s^n|+|X_s|+\mathbb{E}|X_s^n|+\mathbb{E}|X_s|\Big)^pds\mathbbm{1}_{\big\{\sup_{t\leq T}(|X_t^n|\vee|X_t|)>R\big\}}\Bigg]\nonumber\\
		&\to 0, \qquad \mathrm{by ~sending} ~~n\to\infty ~\mathrm{and ~then} ~~R\to\infty.
	\end{align}
	Similarly,
	\begin{equation}\label{eos49}
		\begin{aligned}	
			&\lim\limits_{n\rightarrow \infty}\mathbb{E}\sup_{0\le t\le T}\Big|\int_0^t\big(\sigma(s, X_s^n)-\sigma(s, X_s)\big)dB_s\Big|^p=0.
		\end{aligned}
	\end{equation}

It remains to prove the fourth item in Definition \ref{def:mvsvi}.
	Set $$\phi_t^n:=\int_0^t\nabla\psi^n(X_s^n)ds.$$ By  equations \eqref{eos45}, \eqref{eos48}, and \eqref{eos49},  $\{\phi^n\}$ is a Cauchy sequence on $\mathbb{S}^p[0, T]$ and thus there exists $\phi\in \mathbb{S}^p[0, T]$ such that 	\begin{align}\label{phin}\lim_{n\rightarrow \infty}\mathbb{E}\sup\limits_{0\le s\le T}|\phi_s^n-\phi_s|^p=0.\end{align}
	By Theorem \ref{d14} and equation \eqref{eos25}, we have
	\begin{align}\label{Jnxn}
		&\hspace{-0.5cm}\lim\limits_{n\rightarrow \infty}\mathbb{E}\sup\limits_{0\le s\le T}|J_nX_s^n-X_s|\nonumber\\
		&\le \lim\limits_{n\rightarrow \infty}\mathbb{E}\sup\limits_{0\le s\le T}|J_nX_s^n-X^n_s|+\lim\limits_{n\rightarrow \infty}\mathbb{E}\sup\limits_{0\le s\le T}|X^n_s-X_s|\nonumber\\
		&\le \lim\limits_{n\rightarrow \infty}\frac{C\Big(\mathbb{E}\sup\limits_{0\le s\le T}|\nabla\psi^n(X_s^n)|^4\Big)^\frac{1}{4}}{n}\nonumber\\
		&=0,
	\end{align}
	and then
		\begin{align}\label{Jnx}
		&\hspace{-0.5cm}\lim\limits_{n\rightarrow \infty}\mathbb{E}\sup\limits_{0\le s\le T}|J_nX_s-X_s|\nonumber\\
		&\le \lim\limits_{n\rightarrow \infty}\mathbb{E}\sup\limits_{0\le s\le T}|J_nX_s-J_nX^n_s|+\lim\limits_{n\rightarrow \infty}\mathbb{E}\sup\limits_{0\le s\le T}|J_nX^n_s-X_s|\nonumber\\
		&\le \lim\limits_{n\rightarrow \infty}\mathbb{E}\sup\limits_{0\le s\le T}|X^n_s-X_s|=0.
	\end{align}	
	
	By equations \eqref{pconX}, \eqref{phin}-\eqref{Jnx}, there exists a space  $\Omega_0$ with $\mathbb{P}(\Omega_0)=1$  satisfying that for any $\omega\in\Omega_0$, $X_t(\omega)\in \overline{D}$ for any $t\in [0, T]$, and that there exists a subsequence (still denoted as $(X^n, \phi^n)$) such that
		\begin{align*}
			\lim\limits_{n\rightarrow \infty}\sup\limits_{0\le s\le T}|X_s^n(\omega)-X_s(\omega)|=0,&\quad
			\lim\limits_{n\rightarrow \infty}\sup\limits_{0\le s\le T}|\phi_s^n(\omega)-\phi_s(\omega)|=0,\\
			\lim\limits_{n\rightarrow\infty}\sup\limits_{0\le s\le T}|J_nX_s^n(\omega)-X_s(\omega)|=0&\qquad\text{and}\quad
			\lim\limits_{n\rightarrow\infty}\sup\limits_{0\le s\le T}|J_nX_s(\omega)-X_s(\omega)|=0.
		\end{align*}
	Then by Fatou's Lemma, we have for any $\omega\in\Omega_0$,
	\begin{equation} \label{eospsi}
		\begin{aligned}
			\int_s^t\psi(X_r(\omega))dr\le \liminf\limits_{n\rightarrow
				\infty}\int_s^t\psi(J_nX_r^n(\omega))dr.
		\end{aligned}
	\end{equation}
	{Set 
\begin{align*}
&\Omega_1:=\Big\{\omega;\; \liminf_n|\phi^n(\omega)|_0^T<+\infty\Big\},\\ &\Omega_2:=\Big\{\omega; ~|\phi^n(\omega)|_0^T<+\infty,\; \phi^n_0(\omega)=0,\;\mathrm{and ~for ~all} ~\varrho\in C([0,T];\overline{D}),\\ &\hspace{3cm}(\varrho_t-X^n_t(\omega))d\phi^n_t(\omega)+\psi(X^n_t(\omega))dt\le \psi(\varrho_t)dt, ~\forall n\in\mathbb{N}\Big\}.\\
&\Omega_3:=\Big\{\omega;\ \sup_{t\leq T}|X_t(\omega)|<\infty\Big\}.
\end{align*} Then it follows from equation \eqref{eos19} that
	$
	\mathbb{P}(\Omega_0\cap\Omega_1\cap\Omega_2\cap\Omega_3)=1
	$
	and  for any $\omega\in\Omega_0\cap\Omega_1\cap\Omega_2\cap\Omega_3$, there exists a subsequence $\{n_k\}$ (depending on $\omega$ possibly) such that
	$$\lim_{k\to\infty}|\phi^{n_k}(\omega)|_0^T<\infty\quad \text{and}\quad\sup_k|\phi^{n_k}(\omega)|_0^T<\infty.$$		
	Then given any partition of $[0,T]$: $0=t_0<t_1<\cdots<t_m=T$, 
	\begin{equation} \begin{aligned}
			\sum_{i=0}^{m-1}|\phi_{t_{i+1}}(\omega)-\phi_{t_i}(\omega)|&\le
			\sum_{i=0}^{m-1}\lim_{k\to\infty}|\phi^{n_k}_{t_{i+1}}(\omega)-\phi^{n_k}_{t_i}(\omega)|\\
			&=\lim_{k\to\infty}\sum_{i=0}^{m-1}|\phi^{n_k}_{t_{i+1}}(\omega)-\phi^{n_k}_{t_i}(\omega)|\\&\le \sup_{k}|\phi^{n_k}(\omega)|_0^T<\infty,
		\end{aligned}\nonumber
	\end{equation}
	which yields $|\phi(\omega)|_0^T<\infty$.} 

{Meanwhile, for any $0\leq s\leq t\leq T$, it follows from Lemma \ref{intconv} that 
	\begin{align*}
			\lim_{k\to\infty}\int_s^t X^{n_k}_r(\omega)d\phi^{n_k}_r(\omega)= \int_s^tX_r(\omega)d\phi_r(\omega).
		\end{align*}
and moreover, 
\begin{equation}\label{intxphi}\begin{aligned}
			\lim_k\int_s^t\big[\varrho_r(\omega)-X^{n_k}_r(\omega)\big]d\phi^{n_k}(\omega)
			=\int_s^t\big[\varrho_r(\omega)-X_r(\omega)\big]d\phi(\omega).
		\end{aligned}
	\end{equation}
	Noticing that $\psi(J_nx)\le \psi^n(x)\le\psi(x)$, for any  $\varrho\in C([0, T]; \overline{D})$ and any $T\ge t>s\ge0$, we have
	\begin{equation} \label{eos50}
		\begin{aligned}
			\int_s^t(\varrho_r-X_r^{n_k}(\omega))d\phi^{n_k}_r(\omega)&\le \int_s^t\psi^{n_k}(\varrho_r)dr-\int_s^t\psi^{n_k}(X_r^{n_k}(\omega))dr\\
			&\le \int_s^t\psi(\varrho_r)dr-\int_s^t\psi(J_{n_k}X_r^{n_k}(\omega))dr.
		\end{aligned}
	\end{equation}}
		Then taking limits on both sides of equation \eqref{eos50}, 
	\begin{equation} \label{eos53}
		\begin{aligned}
			\int_s^t(\varrho_r-X_r(\omega))d\phi_r(\omega)\le \int_s^t\psi(\varrho_r)dr-\int_s^t\psi(X_r(\omega))dr.
		\end{aligned}	\nonumber
	\end{equation}
	Therefore, $(X, \phi)$ is the solution to equation \eqref{eos1}.
	\bigskip
	
	\noindent\textbf{Step $6$.} 	At last, we are going to prove the uniqueness of the solution to equation \eqref{eos1}.
	Suppose $(X, \phi)$ and $(\overline{X}, \overline{\phi})$ both are the solution to equation \eqref{eos1}. Similar to equation \eqref{eos18}, with analogous arguments we obtain that for any $p>0$,
$$
			\mathbb{E}\sup\limits_{0\le s\le T}(1+|X_s|^2)^{\frac{p}{2}}\le C_p(1+\mathbb{E}|\xi|^p).
$$
	Similarly,
$$
			\mathbb{E}\sup\limits_{0\le s\le T}(1+|\overline{X}_s|^2)^{\frac{p}{2}}\le C_p(1+\mathbb{E}|\xi|^p).
$$
	For any $R>0$, set
	\begin{equation}\label{eos63}
		\Omega_R:=\left\{\omega\in \Omega;\; \sup\limits_{0\le s\le T}|X_s|\vee \sup\limits_{0\le s\le T}|\overline{X}_s|> R\right\}.\nonumber
	\end{equation}
	Using the Yamada-Watanabe function, by It\^o's formula, we have
		\begin{align*}
			V_{\epsilon, \delta}(X_t-\overline{X}_t) &= \int_0^tV_{\epsilon, \delta}'(X_s-\overline{X}_s)\Big(b(s, X_s, \mu_{X_s})-b(s, \overline{X}_s, \mu_{\overline{X}_s})\Big)ds\\
			&\quad+\frac{1}{2}\int_0^tV_{\epsilon, \delta}^{''}(X_s-\overline{X}_s)\Big|\sigma(s, X_s)-\sigma(s, \overline{X}_s)\Big|^2ds\\
			&\quad+\int_0^tV_{\epsilon, \delta}'(X_s-\overline{X}_s)\Big(\sigma(s, X_s)-\sigma(s, \overline{X}_s)\Big)dB_s\\
			&\quad-\int_0^tV_{\epsilon, \delta}'(X_s-\overline{X}_s)d(\phi_s-\overline{\phi}_s)\\
			&=:\mathcal{J}_{6, 1}(t)+\mathcal{J}_{6, 2}(t)+\mathcal{J}_{6, 3}(t)+\mathcal{J}_{6, 4}(t).
		\end{align*}
	Then with arguments analogous to those in obtaining equation \eqref{eos42},
	\begin{equation}\label{eos69}
		\begin{aligned}
			&\mathbb{E}|X_t-\overline{X}_t|\\
			&\le  \epsilon+ \mathbb{E}[V_{\epsilon, \delta}(X_t-\overline{X}_t)]\nonumber\\
			&\le \epsilon+\int_0^t\Big(L_R+\mathbb{E}|X_s|+\mathbb{E}|\overline{X}_s|\Big)\mathbb{E}|X_s-\overline{X}_s|ds+\frac{\epsilon^{2\alpha}}{\ln \delta}+\frac{c_q^{1/2}}{R^\frac{q}{2}}+\epsilon.
		\end{aligned} \nonumber
	\end{equation}
	First sending $\epsilon\rightarrow 0$, then by Gr\"onwall's Lemma, we have
$$
			\mathbb{E}|X_t-\overline{X}_t|\le \frac{C_qe^{c_2T}(1+R)^{t}}{R^\frac{q}{2}},
$$
	where $c_2:=\mathbb{E}\sup_{t\leq T}(|X_t|+|\overline{X}_t|)$.
	Choosing  $q>2T$ and then sending $R\rightarrow 0$, we have
	\begin{equation}\label{eos71}
		\mathbb{E}|X_t-\overline{X}_t|=0.\nonumber
	\end{equation}	
	Then $X$ and $\overline{X}$ are modification of each other (i.e., $X_t=\overline{X}_t$, $\mathbb{P}$-a.s., for each $t$),
	and they are both continuous processes that are indistinguishable, i.e.,
$$
		\mathbb{P}\Big(X_t=\overline{X}_t, \;\forall 0\le t\le T\Big)=1,
$$
	and moreover
	$$	\mathbb{P}\Big(\phi_t=\overline{\phi}_t, \;\forall 0\le t\le T\Big)=1.
	$$ This proves the pathwise uniqueness of the strong solution.
\end{proof}

\section{Well-posedness of the MVFBSVS}
\label{sec:Well-posedness_MVFBSVS}
In this section, we prove the existence and uniqueness of solutions  to equation \eqref{eb1} in Theorem \ref{thb2}.
 Now we impose the following conditions on the backward equation.
\begin{assumption}\label{assumption4}
	For any $x, y, z, y_1, z_1, y_2, z_2\in\mathbb{R} $, $\mu, \mu' \in \mathcal{P}_1(\mathbb{R})$, $\nu, \nu_1,  \nu_2\in \mathcal{P}_2(\mathbb{R})$, $0<t<T$, and $\omega\in\Omega$, $F(\cdot, \cdot, x, y, z, \mu, \nu)$ is progressively measurable such that
	$
	\mathbb{E}\int_0^T|F(s, x, y, z, \mu, \nu)|^2ds<\infty
	$, and there exist constants $C>0$ and  $l>1, ~0<k<1$, such that
		$$|F(\omega, t, x, y, z, \mu, \nu)|\le C\Big(1+|x|^l+|y|^k+|z|^k+\mu(|\cdot|^l)^{\frac1l}+(\nu(|\cdot|^2))^\frac{1}{2}\Big), $$
	$$|G(\omega, x, \mu)|\le C(1+|x|+\mu(|\cdot|)),$$
	$$\big|G(\omega, x, \mu)-G(\omega, x', \mu')\big|\le C\big(|x-x'|+W_1(\mu, \mu')),$$
	and there exists a constant $L_R>0$  satisfying that $e^{L_R^2}\leq C(1+R)$ for any $R>0$, such that if $|x|\leq R$,
	\begin{align*}
		&\hspace{-0.3cm}\Big|F(\omega, t, x, y_1, z_1, \mu, \nu_1)-F(\omega, t, x, y_2, z_2, \mu, \nu_2)\Big|\\
		&\le \big(L_R+\mu(|\cdot|^l)^{\frac1l}\big)\Big(|y_1-y_2|+|z_1-z_2|+W_2(\nu_1, \nu_2)\Big).\nonumber
	\end{align*}
	Furthermore, $\psi_2$  is  {convex} and lower semicontinuous satisfying that $\psi_2(x)\ge \psi_2(0)=0$, $0\in \operatorname{Int}(D_2)$ where $D_2:=\{x:\partial\psi_2(x)\neq\emptyset\}$, and there exist constants $l>1$ and $C>0$ such that
	$$	\psi_2(x)\le C(1+|x|^l)\quad\text{and}\quad  {\mathbb{E}\Big[|\xi|^{4\vee (2l)}+|\psi(\xi)|^2\Big]<\infty}.$$
\end{assumption}
We first give the definition of the solution to equation \eqref{eb1}.
\begin{definition}
	\label{eqn:def_MVFBSVS}
	A quintuple of {progressively measurable} processes $(X, Y, Z, \phi, \phi^{(2)})$ defined on  $(\Omega, \mathscr{F}, \{\mathscr{F}_t\}_{t\geq0}, \mathbb{P})$ is called a solution to equation \eqref{eb1} if it satisfies the following conditions:
	\begin{itemize}
		\setlength\itemsep{0.18em}
\item $(X, \phi)$ is a solution of equation \eqref{eos1}.
		\item $(Y, Z)\in \mathbb{S}^2[0,T]\times \mathbb{H}^2[0,T]$, and $Y_t\in \overline{D}_2$ for any $t\in [0,T]$, $\mathbb{P}$-a.s..
		\item  $\phi^{(2)}$ is a continuous process with bounded variation on $[0,T]$.
		\item $(Y, Z, \phi^{(2)})$ satisfies that for any $0\leq t\leq T$,
		$$
		\begin{aligned}
			Y_t=&G(X_T, \mu_{X_T})+\int_t^TF(s, X_s, Y_s, Z_s, \mu_{X_s}, \mu_{Y_s})ds-\int_t^TZ_sdB_s\\
			&\hspace{7cm}-(\phi^{(2)}_T-\phi^{(2)}_t), \quad \mathbb{P}-a.s..
		\end{aligned}
		$$
		\item 	For any $\varrho_2 \in C([0, T]; \overline{D}_2)$ and $0\leq s<t\leq T$,
		$$
		\begin{aligned}
			\int_s^t(\varrho_2(u)-Y_u)d\phi^{(2)}_u+\int_s^t\psi_2(Y_u)du\le \int_s^t\psi_2(\varrho_2(u))du, \quad \mathbb{P}-a.s..
		\end{aligned}
		$$	
	\end{itemize}
\end{definition}


Theorem \ref{thos1}  establishes the existence and uniqueness of solution $(X,\phi)$ for the forward equation in equation \eqref{eos1} and $X\in \mathbb{S}^p[0,T]$ for any $p\geq2$. To prove the existence and uniqueness of the solution for equation \eqref{eb1}, we consider the following approximating system of equations:
\begin{equation} \label{eb2}
	\left \{\begin{aligned}
		dY^n_t = &-F^n(t, X_t, Y^n_t, Z^n_t, \mu_{X_t}, \mu_{Y^n_t})dt+Z^n_tdB_t+\nabla\psi_2^n(Y^n_t)dt, \\
		Y^n_T=&G(X_T, \mu_{X_T}).
	\end{aligned} \right.
\end{equation}	
Here, $\psi_2^n(x)$ is the Yosida-Moreau approximation function, and
$$F^n( t, X_t, y, z, \mu_{X_t}, \nu):=F( t, X_t, y, z, \mu_{X_t}, \nu)\mathbbm{1}_{\{\sup_{s\leq t}|X_s|\leq n\}}.$$  Then for every $n\geq1$,
\begin{align*}
&\hspace{-0.8cm}\Big|F^n(t,X_t, y_1, z_1, \mu_{X_t},\nu_1)-F^n(t,X_t, y_2, z_2, \mu_{X_t},\nu_2)\Big|\\
&\leq \big(L_n+\mathbb{E}|X_t|^l\big)\Big(|y_1-y_2|+|z_1-z_2|+W_1(\nu_1,\nu_2)\Big).
\end{align*}
By Theorem $4.24$ in \cite{carmona2018probabilistic}, for every $n\geq1$, equation \eqref{eb2} has a unique solution $(Y^n, Z^n)\in\mathbb{S}^2[0, T]\times\mathbb{H}^2[0, T]$.

\begin{lemma}\label{lb1}
	Under the Assumptions \ref{assumption1}, \ref{assumption2},  and \ref{assumption4}, for $Y^n$ and $Z^n$ evolving according to
	equation \eqref{eb2},
	there exists a constant $C_1>0$ depending on $T$ and $\mathbb{E}|\xi|^{2l}$ such that
	$$\sup\limits_{n}\left[\mathbb{E}\sup\limits_{0\le s\le T}|Y_s^n|^2+\mathbb{E}\int_0^T|Z_s^n|^2ds\right]\le C_1.$$		
\end{lemma}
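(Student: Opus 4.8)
The plan is to carry out the standard \emph{a priori} estimate for a BSDE with a reflecting (subdifferential-type) term, based on It\^o's formula applied to $|Y^n_t|^2$ on $[t,T]$. From the backward form of \eqref{eb2} one gets
\begin{equation*}
|Y^n_t|^2+\int_t^T|Z^n_s|^2ds=|G(X_T,\mu_{X_T})|^2+2\int_t^TY^n_sF^n(s,X_s,Y^n_s,Z^n_s,\mu_{X_s},\mu_{Y^n_s})ds-2\int_t^TY^n_s\nabla\psi_2^n(Y^n_s)ds-2\int_t^TY^n_sZ^n_sdB_s.
\end{equation*}
The first point is that $Y^n_s\nabla\psi_2^n(Y^n_s)\ge0$: since $\psi_2\ge\psi_2(0)=0$, the Yosida--Moreau approximation $\psi_2^n$ attains its minimum value $0$ at the origin, so $\nabla\psi_2^n(0)=0$, and monotonicity of $\nabla\psi_2^n$ from Theorem \ref{d14} gives the claim. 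Hence the third term on the right is nonpositive and may be discarded, exactly as the term $\mathcal{J}_{1,3}$ was handled in the proof of Theorem \ref{thos1}.

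Next I would estimate $2|Y^n_s|\,|F^n|$. Since $|F^n|\le|F|$ pointwise, the growth bound in Assumption \ref{assumption4} gives $|F^n|\le C\big(1+|X_s|^l+|Y^n_s|^k+|Z^n_s|^k+(\mathbb{E}|X_s|^l)^{1/l}+(\mathbb{E}|Y^n_s|^2)^{1/2}\big)$ with $l>1$ and $0<k<1$. Young's inequality then yields $|Y^n_s|\,|Z^n_s|^k\le\eta|Z^n_s|^2+C_\eta(1+|Y^n_s|^2)$ and $|Y^n_s|\,|Y^n_s|^k=|Y^n_s|^{1+k}\le1+|Y^n_s|^2$, while the remaining products are dominated by $1+|Y^n_s|^2+|X_s|^{2l}+\mathbb{E}|Y^n_s|^2$ up to a fixed constant, using $X\in\mathbb{S}^{2l}[0,T]$ from Theorem \ref{thos1}. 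Choosing $\eta$ small enough to absorb $\eta\int_t^T|Z^n_s|^2ds$ into the left-hand side, bounding $\mathbb{E}|G(X_T,\mu_{X_T})|^2\le C(1+\mathbb{E}|X_T|^2)$ via the linear growth of $G$, and taking expectations (the stochastic integral being a martingale after a routine localization since $Y^n\in\mathbb{S}^2[0,T]$ and $Z^n\in\mathbb{H}^2[0,T]$), one arrives at
\begin{equation*}
\mathbb{E}|Y^n_t|^2+\frac{1}{2}\mathbb{E}\int_t^T|Z^n_s|^2ds\le C_0+C\int_t^T\mathbb{E}|Y^n_s|^2ds,
\end{equation*}
where $C_0$ depends only on $T$, the constants in Assumptions \ref{assumption1}--\ref{assumption4}, and $\mathbb{E}\sup_{s\le T}|X_s|^{2l}$, hence on $\mathbb{E}|\xi|^{2l}$ through Theorem \ref{thos1}.

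The backward Gr\"onwall inequality then gives $\sup_n\sup_{t\le T}\mathbb{E}|Y^n_t|^2\le C_0e^{CT}$, and feeding this back into the inequality at $t=0$ yields $\sup_n\mathbb{E}\int_0^T|Z^n_s|^2ds\le C$. To upgrade to the supremum norm, I would return to the It\^o identity, take $\sup_{t\le T}$ before taking expectations, use $\sup_{t\le T}\big|\int_t^TY^n_sZ^n_sdB_s\big|\le2\sup_{t\le T}\big|\int_0^tY^n_sZ^n_sdB_s\big|$, and apply the BDG and Young inequalities to bound the resulting term by $\frac{1}{2}\mathbb{E}\sup_{s\le T}|Y^n_s|^2+C\mathbb{E}\int_0^T|Z^n_s|^2ds$; absorbing the first term and using the two estimates already obtained gives $\sup_n\mathbb{E}\sup_{t\le T}|Y^n_t|^2\le C_1$, which together with the $Z^n$ bound is the assertion.

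The main obstacle is the sub-quadratic $|z|^k$ term in $F$: it cannot be treated as a lower-order perturbation and has to be absorbed into $\int_t^T|Z^n_s|^2ds$, which is precisely why $0<k<1$ (and not merely $k\le1$) is imposed. The McKean--Vlasov self-reference through $\mu_{Y^n_s}$ is harmless, because taking expectations turns the pathwise inequality into a closed integral inequality for $\mathbb{E}|Y^n_t|^2$; the only other technical care is the (routine) localization required to discard the martingale $\int_0^\cdot Y^n_sZ^n_sdB_s$.
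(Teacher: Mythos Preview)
Your proof is correct and follows essentially the same route as the paper: It\^o's formula for $|Y^n_t|^2$ on $[t,T]$, dropping the subdifferential term via monotonicity of $\nabla\psi_2^n$, Young's inequality on the $F$-term to absorb the $|Z^n|^2$ contribution, and finally BDG to pass to the supremum. The only cosmetic difference is that the paper applies It\^o to $e^{\lambda t}|Y^n_t|^2$ and chooses $\lambda$ large enough to absorb the integral of $|Y^n_s|^2$ directly, whereas you work with the unweighted identity and invoke the backward Gr\"onwall inequality explicitly; these two devices are interchangeable.
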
	
\begin{proof}
	By It\^o's formula, for any $\lambda>0$,
	\begin{align}\label{eb3}
		&e^{\lambda t}|Y_t^n|^2+\lambda\int_t^Te^{\lambda s}|Y_s^n|^2ds+\int_t^Te^{\lambda s}|Z_s^n|^2ds+2\int_t^Te^{\lambda s}Y_s^n\nabla\psi_2^n(Y_s^n)ds\nonumber\\
		&={e^{\lambda T}}|G(X_T, \mu_{X_T})|^2+2\int_t^Te^{\lambda s}Y_s^nF^n(s, X_s, Y_s^n, Z_s^n, \mu_{X_s}, \mu_{Y_s^n})ds\nonumber\\
		&\hspace{6cm}-2\int_t^Te^{\lambda s}Y_s^nZ_s^ndB_s.
	\end{align}
	Taking expectations, we have
		\begin{align*}
			&\mathbb{E}e^{\lambda t}|Y_t^n|^2+\lambda \mathbb{E}\int_t^Te^{\lambda s}|Y_s^n|^2ds+\mathbb{E}\int_t^Te^{\lambda s}|Z_s^n|^2ds+2\mathbb{E}\int_t^Te^{\lambda s}Y_s^n\nabla\psi_2^n(Y_s^n)ds\\
			&={e^{\lambda T}}\mathbb{E}|G(X_T, \mu_{X_T})|^2+2\mathbb{E}\int_t^Te^{\lambda s}Y_s^nF^n(s, X_s, Y_s^n, Z_s^n, \mu_{X_s}, \mu_{Y_s^n})ds.
		\end{align*}
	Note that by Young's inequality and Theorem \ref{thos1},
	\begin{equation} \label{eb5}
		\begin{aligned}
			&2\mathbb{E}\int_t^Te^{\lambda s}Y_s^nF^n(s, X_s, Y_s^n, Z_s^n, \mu_{X_s}, \mu_{Y_s^n})ds	\\
			&\le 2C\mathbb{E}\int_t^Te^{\lambda s}|Y_s^n|\Big(1+|X_s|^l+|Y_s^n|^k+|Z_s^n|^k+(\mathbb{E}|X_s|^l)^{1/l}+\sqrt{\mathbb{E}|Y_s^n|^2}\Big)ds\\
			&\le \frac{C'}{\epsilon}\mathbb{E}\int_t^Te^{\lambda s}|Y_s^n|^2ds\\
			&\quad+4C'\epsilon \mathbb{E}\int_t^Te^{\lambda s}\Big(1+|X_s|^{2l}+|Y_s^n|^2+|Z_s^n|^2+\mathbb{E}|X_s|^{2l}+\mathbb{E}|Y_s^n|^2\Big)ds.
		\end{aligned}\nonumber
	\end{equation}
	By the monotonicity of $\nabla\psi_2^n(x)$, we know that $$\mathbb{E}\int_t^Te^{\lambda s}Y_s^n\nabla\psi_2^n(Y_s^n)ds\ge 0.$$
	By Assumption \ref{assumption4} and Theorem \ref{thos1}, we have
	$$
	\mathbb{E}|G(X_T, \mu_{X_T})|^2\le C_T(1+\mathbb{E}|\xi|^2).
	$$
	Choosing $\epsilon=\frac{1}{16C'}$ and $\lambda=\lambda_1:=16C'^2+1$, it follows from Theorem \ref{thos1} that
		\begin{align}\label{eb6}
			&\hspace{-0.5cm}\frac{1}{2}\mathbb{E}\int_0^Te^{\lambda_1 s}|Y_s^n|^2ds+\frac{1}{2}\mathbb{E}\int_0^Te^{\lambda_1 s}|Z_s^n|^2ds\nonumber\\
			&\le {e^{\lambda_1 T}}\mathbb{E}|G(X_T, \mu_{X_T})|^2+\frac{1}{4}\mathbb{E}\int_0^Te^{\lambda_1 s}\Big(1+|X_s|^{2l}+\mathbb{E}|X_s|^{2l}\Big)ds\nonumber\\
			&\le C_{T}(1+\mathbb{E}|\xi|^{2l}).
		\end{align}
	By the BDG inequality, we obtain
	\begin{align*}
		\mathbb{E}\sup\limits_{0\le t\le T}\left|\int_t^TY_s^nZ_s^ndB_s\right|\le \frac{1}{2}\mathbb{E}\sup\limits_{0\le s\le  T}|Y_s^n|^2+C\mathbb{E}\int_0^T|Z_s^n|^2ds.
	\end{align*}
Plugging the above inequality into equation \eqref{eb3}, we have
\begin{equation} \label{eb8}
	\begin{aligned}
		\sup_n\mathbb{E}\sup\limits_{0\le s\le T}|Y_s^n|^2\le C_T(1+\mathbb{E}|\xi|^{2l}).
	\end{aligned}
\end{equation}
Combining equations \eqref{eb6} and \eqref{eb8}, we have
\begin{equation} \label{eb9}
	\begin{aligned}
		\sup\limits_{n}\left[\mathbb{E}\sup\limits_{0\le s\le T}|Y_s^n|^2+\mathbb{E}\int_0^T|Z_s^n|^2ds\right]\le C_T(1+\mathbb{E}|\xi|^{2l}).
	\end{aligned}\nonumber
\end{equation}
\end{proof}
\begin{lemma}\label{lb2}
Under Assumptions \ref{assumption1}, \ref{assumption2} and  \ref{assumption4}, for $Y^n$ evolving according to
equation \eqref{eb1}, there exists $C' > 0$ depending on $T$ and $\mathbb{E}|\xi|^{2l}$ such that
$$\sup\limits_{n}\int_0^T\mathbb{E}|\nabla\psi_2^n(Y_s^n)|^2ds\le C'.$$	
\end{lemma}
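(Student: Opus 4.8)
The plan is to adapt the argument of Step~2 of the proof of Theorem~\ref{thos1} to the backward approximating system~\eqref{eb2}, the novelty being that the bound must be uniform in $n$. The mechanism is to apply It\^o's formula to $\psi_2^n(Y^n_t)$ rather than to $|Y^n_t|^2$: the former produces $\int_0^T|\nabla\psi_2^n(Y^n_s)|^2\,ds$ with a sign that lets us recover it.

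First I would record two elementary facts. Since $0\in\operatorname{Int}(D_2)$, fix $a_0>0$ with $\{a:|a|\le a_0\}\subset\operatorname{Int}(D_2)$ and put $M_2:=\sup_{|a|\le a_0}\psi_2(a)<\infty$; convexity of $\psi_2^n$ together with $\psi_2^n\le\psi_2$ gives $(a-y)\nabla\psi_2^n(y)\le\psi_2(a)$, hence $a_0|\nabla\psi_2^n(y)|\le y\nabla\psi_2^n(y)+M_2$ for all $y\in\mathbb{R}$. Moreover, taking expectations in equation~\eqref{eb3} at $t=0$, discarding the nonnegative terms on its left-hand side, and inserting the bounds for $\mathbb{E}|G(X_T,\mu_{X_T})|^2$ and for the driver term already obtained in the proof of Lemma~\ref{lb1}, one gets $\sup_n\mathbb{E}\int_0^T Y^n_s\nabla\psi_2^n(Y^n_s)\,ds\le C$ and therefore $\sup_n\mathbb{E}\int_0^T|\nabla\psi_2^n(Y^n_s)|\,ds\le C$; this $L^1$-bound is not the claim, but it makes the next computation legitimate.

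For the core step, note that $\psi_2^n$ is convex and $C^1$ with $n$-Lipschitz gradient, hence $C^{1,1}$ with $0\le(\psi_2^n)''\le n$ a.e. Applying It\^o's formula to $\psi_2^n(Y^n_t)$ along $dY^n_t=-F^n(s,\cdot)\,dt+Z^n_t\,dB_t+\nabla\psi_2^n(Y^n_t)\,dt$ (justified by mollifying $\psi_2^n$ and using $0\le(\psi_2^n)''\le n$, exactly as $\psi^n$ is handled in Step~2 of the proof of Theorem~\ref{thos1}), and moving the nonnegative terms $\psi_2^n(Y^n_0)$ and $\frac12\int_0^T(\psi_2^n)''(Y^n_s)|Z^n_s|^2\,ds$ to the left, yields
\begin{align*}
\int_0^T|\nabla\psi_2^n(Y^n_s)|^2\,ds
&\le \psi_2^n(Y^n_T)+\int_0^T\nabla\psi_2^n(Y^n_s)F^n(s,\cdot)\,ds\\
&\quad-\int_0^T\nabla\psi_2^n(Y^n_s)Z^n_s\,dB_s.
\end{align*}
Taking expectations (the stochastic integral has zero mean, which a routine localization argument confirms) and using Young's inequality $\nabla\psi_2^n(Y^n_s)F^n\le\frac12|\nabla\psi_2^n(Y^n_s)|^2+\frac12|F^n|^2$ to absorb half of the left-hand side, I would obtain
$$\frac12\,\mathbb{E}\int_0^T|\nabla\psi_2^n(Y^n_s)|^2\,ds\le\mathbb{E}\,\psi_2^n(Y^n_T)+\frac12\,\mathbb{E}\int_0^T|F^n(s,\cdot)|^2\,ds.$$
The first term on the right is bounded using $\psi_2^n(Y^n_T)\le\psi_2\big(G(X_T,\mu_{X_T})\big)\le C\big(1+|X_T|^l+(\mathbb{E}|X_T|)^l\big)$ and Theorem~\ref{thos1}; the second using the growth of $F$ in Assumption~\ref{assumption4} together with $0<k<1$ (so $|Y^n_s|^k+|Z^n_s|^k\le 2+|Y^n_s|^2+|Z^n_s|^2$), Theorem~\ref{thos1} and Lemma~\ref{lb1}. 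This yields $\sup_n\mathbb{E}\int_0^T|\nabla\psi_2^n(Y^n_s)|^2\,ds\le C'$, as claimed.

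The step requiring the most care is the rigorous It\^o expansion of the merely $C^{1,1}$ function $\psi_2^n$, together with the verification that the stochastic integral above has zero expectation; both are handled as in Step~2 of the proof of Theorem~\ref{thos1}, via a mollification of $\psi_2^n$, the a priori bounds $0\le(\psi_2^n)''\le n$ and $|\nabla\psi_2^n(y)|\le n|y|$, and a localization in time. Everything else reduces to the uniform moment bounds already available from Lemma~\ref{lb1} and Theorem~\ref{thos1}.
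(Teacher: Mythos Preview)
Your proposal is correct and follows essentially the same route as the paper: apply It\^o's formula to $\psi_2^n(Y^n_t)$ along the backward dynamics~\eqref{eb2}, observe that the term $\int|\nabla\psi_2^n(Y^n_s)|^2\,ds$ appears with a favourable sign, take expectations so the stochastic integral vanishes, split $\nabla\psi_2^n(Y^n_s)F^n$ by Young's inequality and absorb half of the gradient term, and finally bound $\mathbb{E}\psi_2^n(G(X_T,\mu_{X_T}))$ via $\psi_2^n\le\psi_2\le C(1+|\cdot|^l)$ together with Theorem~\ref{thos1}, and $\mathbb{E}\int_0^T|F^n|^2\,ds$ via the growth hypothesis on $F$ together with Lemma~\ref{lb1}. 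The only cosmetic differences are that the paper carries an exponential weight $e^{\lambda t}$ (inessential here) and that your preliminary $L^1$-bound on $\nabla\psi_2^n(Y^n_s)$ is not actually needed for the argument to close.
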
	
\begin{proof}
By {It\^o's formula and the convexity of $\psi_2^n$}, for any $ 0<s<t\le T$ and any $\lambda>0$,
	\begin{align*}
e^{\lambda t}\psi_2^n(Y_t^n)-e^{\lambda s}\psi_2^n(Y_s^n)\geq
\int_s^t\lambda e^{\lambda r}\psi_2^n(Y_r^n)dr
+\int_s^te^{\lambda r}\nabla\psi_2^n(Y_r^n)dY_r^n.
\end{align*}
We then have
	\begin{align*}
		&\hspace{-0.8cm}e^{\lambda t}\psi_2^n(Y_t^n)+\lambda\int_t^Te^{\lambda s}\psi_2^n(Y_s^n)ds+\int_t^Te^{\lambda s}|\nabla\psi_2^n(Y_s^n)|^2ds\\
	&\le e^{\lambda T}\psi_2^n(G(X_T, \mu_{X_T}))-\int_t^Te^{\lambda s}\nabla\psi_2^n(Y_s^n)Z_s^ndB_s\\
		&\quad+\int_t^Te^{\lambda s}\nabla\psi_2^n(Y_s^n)F^n(t, X_s, Y_s^n, Z_s^n, \mu_{X_s}, \mu_{Y_s^n})ds.
	\end{align*}
Note that $\int_t^T\nabla\psi_2^n(Y_s^n)Z_s^ndB_s$ is a martingale by Lemma \ref{lb1} and properties of $\nabla\psi_2^n$. We have
	\begin{align}\label{eb11}
		&e^{\lambda t}\mathbb{E}\psi_2^n(Y_t)+\lambda\mathbb{E}\int_t^Te^{\lambda s}\psi_2^n(Y_s^n)ds+\mathbb{E}\int_t^Te^{\lambda s}|\nabla\psi_2^n(Y_s^n)|^2ds\\
		&=e^{\lambda T}\mathbb{E}\psi_2^n(G(X_T, \mu_{X_T}))+\mathbb{E}\int_t^Te^{\lambda s}\nabla\psi_2^n(Y_s^n)F^n(s, X_s, Y_s^n, Z_s^n, \mu_{X_s}, \mu_{Y_s^n})ds.\nonumber
	\end{align}
By Assumption \ref{assumption4} and Theorem \ref{thos1}, we have
\begin{equation*}
	\sup\limits_{n}\mathbb{E}\psi_2^n(G(X_T, \mu_{X_T}))\le C_T(1+\mathbb{E}|\xi|^l),
\end{equation*}
and by Lemma \ref{lb1} we have
\begin{align*}
&\hspace{-1.5cm}\mathbb{E}\int_t^T\nabla\psi_2^n(Y_s^n)F^n(s, X_s, Y_s^n, Z_s^n, \mu_{X_s}, \mu_{Y_s^n})ds\\
	&\le \frac{1}{2}\mathbb{E}\int_t^T|\nabla\psi_2^n(Y_s^n)|^2ds+C(1+\mathbb{E}|\xi|^{2l}).
\end{align*}
 Substituting the above two inequalities into equation \eqref{eb11}, and noting that  $\psi_2^n(x)\geq 0$, we obtain
\begin{equation} \label{eb14}
	\begin{aligned}
		\sup\limits_{n}\mathbb{E}\int_0^T|\nabla\psi_2^n(Y_s^n)|^2ds\le C_T(1+\mathbb{E}|\xi|^{2l}).
	\end{aligned}\nonumber
\end{equation}
\end{proof}
\begin{lemma}\label{lb3}
Suppose Assumptions \ref{assumption1}, \ref{assumption2} and  \ref{assumption4} hold. When $0<T\le T_0$ for some $T_0>0$, the sequence $(Y^n, Z^n)_n$ is a Cauchy sequence on $\mathbb{S}^2[0, T]\times\mathbb{H}^2[0, T]$.		
\end{lemma}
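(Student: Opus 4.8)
The plan is to run the standard weighted-energy estimate for the difference of two approximating solutions, keeping careful track of how the $X$-truncation built into $F^n$ interacts with the Yosida--Moreau penalization. Fix $n\le m$ and write $\Delta Y=Y^n-Y^m$, $\Delta Z=Z^n-Z^m$. Since $Y^n_T=Y^m_T=G(X_T,\mu_{X_T})$, applying It\^o's formula to $s\mapsto e^{\lambda s}|\Delta Y_s|^2$ for a parameter $\lambda>0$ (to be chosen, possibly depending on $n$) gives, for $0\le t\le T$,
\begin{equation*}
e^{\lambda t}|\Delta Y_t|^2+\lambda\int_t^Te^{\lambda s}|\Delta Y_s|^2ds+\int_t^Te^{\lambda s}|\Delta Z_s|^2ds=\mathcal A(t)+\mathcal B(t)+\mathcal M(t),
\end{equation*}
where $\mathcal A(t)=2\int_t^Te^{\lambda s}\Delta Y_s\big(F^n(s,X_s,Y^n_s,Z^n_s,\mu_{X_s},\mu_{Y^n_s})-F^m(s,X_s,Y^m_s,Z^m_s,\mu_{X_s},\mu_{Y^m_s})\big)ds$, $\mathcal B(t)=-2\int_t^Te^{\lambda s}\Delta Y_s\big(\nabla\psi_2^n(Y^n_s)-\nabla\psi_2^m(Y^m_s)\big)ds$, and $\mathcal M(t)=-2\int_t^Te^{\lambda s}\Delta Y_s\,\Delta Z_s\,dB_s$, which by Lemma \ref{lb1} and a routine localization has zero expectation.

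For $\mathcal B$ I would use the first inequality in Theorem \ref{d14}, namely $\Delta Y_s\big(\nabla\psi_2^n(Y^n_s)-\nabla\psi_2^m(Y^m_s)\big)\ge-\big(\tfrac1n+\tfrac1m\big)\nabla\psi_2^n(Y^n_s)\nabla\psi_2^m(Y^m_s)$, so that Cauchy--Schwarz together with Lemma \ref{lb2} bounds $\mathbb E\mathcal B(t)$ by $Ce^{\lambda T}\big(\tfrac1n+\tfrac1m\big)$. For $\mathcal A$, the key step is to split the integrand along $F^n=F\,\mathbbm{1}_{\{\sup_{r\le s}|X_r|\le n\}}$:
\begin{align*}
&F^n(s,X_s,Y^n_s,Z^n_s,\mu_{X_s},\mu_{Y^n_s})-F^m(s,X_s,Y^m_s,Z^m_s,\mu_{X_s},\mu_{Y^m_s})\\
&\quad=\big[F(s,X_s,Y^n_s,Z^n_s,\mu_{X_s},\mu_{Y^n_s})-F(s,X_s,Y^m_s,Z^m_s,\mu_{X_s},\mu_{Y^m_s})\big]\mathbbm{1}_{\{\sup_{r\le s}|X_r|\le n\}}\\
&\quad\quad-F(s,X_s,Y^m_s,Z^m_s,\mu_{X_s},\mu_{Y^m_s})\,\mathbbm{1}_{\{n<\sup_{r\le s}|X_r|\le m\}}.
\end{align*}
On the support of the first indicator one has $|X_s|\le n$, so the local Lipschitz estimate of Assumption \ref{assumption4} with $R=n$, combined with the moment bound of Theorem \ref{thos1}, controls the first bracket by $(L_n+C)\big(|\Delta Y_s|+|\Delta Z_s|+W_2(\mu_{Y^n_s},\mu_{Y^m_s})\big)$; the second term is handled by the growth bound on $F$. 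After Young's inequality, taking expectations and using $W_2(\mu_{Y^n_s},\mu_{Y^m_s})^2\le\mathbb E|\Delta Y_s|^2$, one arrives at
\begin{equation*}
\mathbb E\mathcal A(t)\le C(L_n+C)^2\,\mathbb E\int_t^Te^{\lambda s}|\Delta Y_s|^2ds+\tfrac14\,\mathbb E\int_t^Te^{\lambda s}|\Delta Z_s|^2ds+Ce^{\lambda T}\rho_n,
\end{equation*}
with $\rho_n:=\sup_{m\ge n}\mathbb E\int_0^T\big|F(s,X_s,Y^m_s,Z^m_s,\mu_{X_s},\mu_{Y^m_s})\big|^2\mathbbm{1}_{\{\sup_{r\le s}|X_r|>n\}}ds$. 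Since $X\in\mathbb S^p[0,T]$ for every $p$ by Theorem \ref{thos1} and $\sup_n\big[\mathbb E\sup_{s\le T}|Y^n_s|^2+\mathbb E\int_0^T|Z^n_s|^2ds\big]<\infty$ by Lemma \ref{lb1}, H\"older's and Chebyshev's inequalities give $\rho_n\le C_q\,n^{-q}$ for every $q>0$. Choosing $\lambda=\lambda_n:=C(L_n+C)^2+1$ absorbs the $|\Delta Y_s|^2$ term into the left side and the $\tfrac14|\Delta Z_s|^2$ term into $\int e^{\lambda s}|\Delta Z_s|^2ds$, and after a BDG estimate for $\mathbb E\sup_{t\le T}|\Delta Y_t|^2$ one obtains
\begin{equation*}
\mathbb E\sup_{0\le t\le T}|\Delta Y_t|^2+\mathbb E\int_0^T|\Delta Z_s|^2ds\le C\,e^{\lambda_n T}\Big(\rho_n+\tfrac1n+\tfrac1m\Big).
\end{equation*}

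Finally, the hypothesis $e^{L_R^2}\le C(1+R)$ turns $e^{\lambda_n T}=e^{(C(L_n+C)^2+1)T}$ into a polynomial bound $e^{\lambda_n T}\le C'(1+n)^{cT}$ with $c$ independent of $n$; hence, taking $T_0$ with $cT_0<1$, the product $(1+n)^{cT}\big(\rho_n+\tfrac1n\big)$ tends to $0$ as $n\to\infty$, uniformly over $m\ge n$, so that $\mathbb E\sup_{t\le T}|Y^n_t-Y^m_t|^2+\mathbb E\int_0^T|Z^n_s-Z^m_s|^2ds\to0$ as $n,m\to\infty$, i.e.\ $(Y^n,Z^n)_n$ is Cauchy in $\mathbb S^2[0,T]\times\mathbb H^2[0,T]$. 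The main obstacle is exactly this balancing: the truncation forces the Lipschitz constant of $F^n$ on its support to be of order $L_n$, so the weight parameter $\lambda_n$ (equivalently the Gr\"onwall rate) must be of order $L_n^2$; the truncation-mismatch residual $\rho_n$ can be made to decay faster than any polynomial, but the penalization residual decays only like $1/n$, so the exponential weight $e^{\lambda_n T}$ can be beaten only when $T$ is small --- which is precisely what produces the threshold $T_0$. Care is also needed with the order of operations: one should absorb the $\Delta Z$ contribution before invoking BDG, justify the martingale property of $\mathcal M$ by localization using Lemma \ref{lb1}, and note that it is the bound $W_2(\mu_{Y^n_s},\mu_{Y^m_s})^2\le\mathbb E|\Delta Y_s|^2$ that allows the mean-field term to be absorbed into $\lambda_n$.
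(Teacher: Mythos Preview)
Your proposal is correct and follows essentially the same route as the paper: the weighted It\^o formula for $e^{\lambda s}|\Delta Y_s|^2$, the Yosida--Moreau inequality from Theorem \ref{d14} together with Lemma \ref{lb2} for the penalization term, the split of $F^n-F^m$ according to the truncation event $\{\sup_{r\le s}|X_r|\le n\}$, the choice $\lambda\sim L_n^2$ to absorb the local-Lipschitz contribution, and the use of $e^{L_n^2}\le C(1+n)$ to turn $e^{\lambda_n T}$ into a polynomial so that small $T$ beats the $1/n$ residual. The paper chooses concrete constants ($\epsilon=\tfrac{1}{12(L_n+c_l)^2}$, $\lambda=12(L_n+c_l)^2+\tfrac32$, $T_0=\tfrac{1}{24}$) and writes the split slightly differently (bounding the mismatch on all of $\{\sup|X|>n\}$ rather than only on $\{n<\sup|X|\le m\}$), but there is no substantive difference in strategy.
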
	
\begin{proof}
Without loss of generality, let $m\geq n$, and denote $$Y_t^{n, m}=Y_t^n-Y_t^m\quad \text{and} \quad Z_t^{n, m}=Z_t^n-Z_t^m.$$ Then, by It\^o's formula, for $\lambda > 0$,
\begin{align}
	&e^{\lambda t}|Y_t^{n, m}|^2+\lambda\int_t^Te^{\lambda s}|Y_s^{n, m}|^2ds+\int_t^Te^{\lambda s}|Z_s^{n, m}|^2ds\nonumber\\
	&+2\int_t^Te^{\lambda s}Y_s^{n, m}(\nabla\psi_2^n(Y_s^n)-\nabla\psi_2^m(Y_s^m))ds+2\int_t^Te^{\lambda s}Y_s^{n, m}Z_s^{n, m}dB_s\nonumber\\
	&=2\int_t^Te^{\lambda s}Y_s^{n, m}\Big(F^n(s, X_s, Y_s^n, Z_s^n, \mu_{X_s}, \mu_{Y_s^n})\\
	&\hspace{4cm}-F^m(s, X_s, Y_s^m, Z_s^m, \mu_{X_s}, \mu_{Y_s^m})\Big)ds\nonumber.\label{eb15}
\end{align}
By Lemma \ref{lb2} and the properties of the Yosida-Moreau function (Theorem \ref{d14}),
\begin{equation} \label{eb19}
\begin{aligned}
	&\hspace{-1cm}-2\mathbb{E}\int_t^Te^{\lambda s}Y_s^{n, m}(\nabla\psi_2^n(Y_s^n)-\nabla\psi_2^m(Y_s^m))ds\\
	&\le 2\mathbb{E}\int_t^Te^{\lambda s}\left(\frac{1}{n}+\frac{1}{m}\right)\nabla\psi_2^n(Y_s^n)\nabla\psi_2^m(Y_s^m)ds\\
	&\le 2\left(\frac{1}{n}+\frac{1}{m}\right)e^{\lambda T}\sqrt{\mathbb{E}\int_t^T|\nabla\psi_2^n(Y_s^n)|^2ds}\sqrt{\mathbb{E}\int_t^T|\nabla\psi_2^m(Y_s^m)|^2ds}\\
	&\le2C'e^{\lambda T}\left(\frac{1}{n}+\frac{1}{m}\right).
\end{aligned}\nonumber
\end{equation}	
Denoting $\Lambda:=\Big\{\sup\limits_{0\le t\le T}|X_t|> n\Big\}$ and for $p\geq1$, 
\begin{align}\label{eqn:setA}
c_{p}:=\big(\mathbb{E}\sup_{s\leq T}|X_s|^p\big)^{1/p},\quad\quad a_2:=\sup_n\big(\mathbb{E}\sup_{s\leq T}|Y^n_s|^2\big)^{1/2}.
\end{align}
we have
\begin{align*}
&2\left|\int_t^Te^{\lambda s}Y_s^{n, m}\Big(F^n(s, X_s, Y_s^n, Z_s^n, \mu_{X_s}, \mu_{Y_s^n})-F^m(s, X_s, Y_s^m, Z_s^m, \mu_{X_s}, \mu_{Y_s^m})\Big)ds\right|\\
&\le2\int_t^Te^{\lambda s}|Y_s^{n, m}|\Big(F^n(s, X_s, Y_s^n, Z_s^n, \mu_{X_s}, \mu_{Y_s^n})\\
&\hspace{5cm}-F^n(s, X_s, Y_s^m, Z_s^m, \mu_{X_s}, \mu_{Y_s^m})\Big)ds{\mathbbm{1}_{\Lambda^c}}\\
&\quad+4\int_t^Te^{\lambda s}|Y_s^{n, m}|\Big|F(s, X_s, Y_s^m, Z_s^m, \mu_{X_s}, \mu_{Y_s^m})\Big|ds{\mathbbm{1}_{\Lambda}}\\
&\le 2\int_t^Te^{\lambda s}\Big(L_n+c_{l}\Big)|Y_s^{n, m}|\Big(|Y_s^{n, m}|+|Z_s^{n, m}|+\sqrt{\mathbb{E}|Y_s^{n, m}|^2}\Big)ds{\mathbbm{1}_{\Lambda^c}}\\
&\quad+C\int_t^Te^{\lambda s}|Y_s^{n, m}|\Big(1+2c_{l}+2a_2+|Y_s^m|^k+|Z_s^m|^k+|Y_s^n|^k+|Z_s^n|^k\Big)ds{\mathbbm{1}_{\Lambda}}\\
&\le \frac{1}{\epsilon}\int_t^Te^{\lambda s}|Y_s^{n, m}|^2ds\\
&\quad+6\epsilon (L_n+c_{l})^2\int_t^Te^{\lambda s}\Big(|Y_s^{n, m}|^2+|Z_s^{n, m}|^2+\mathbb{E}|Y_s^{n, m}|^2\Big)ds{\mathbbm{1}_{\Lambda^c}}\\
&\quad+C^2\epsilon \int_t^Te^{\lambda s}\Big(1+2c_{l}+2a_2+|Y_s^m|^k+|Z_s^m|^k+|Y_s^n|^k+|Z_s^n|^k\Big)^2ds{\mathbbm{1}_{\Lambda}}.
\end{align*}
Choosing $\epsilon=\frac1{12(L_n+c_{l})^2}$ and  $\lambda=12(L_n+c_{l})^2+3/2$, we obtain by applying H\"older's inequality, Chebyshev's inequality, Theorem \ref{thos1}, and Lemma \ref{lb1} that 
\begin{align}\label{eb20}
	&\hspace{-1.5cm}\frac12\mathbb{E}\left[\int_0^Te^{\lambda s}\big(|Y_s^{n, m}|^2+|Z_s^{n, m}|^2\big)ds\right]\nonumber\\
	&	\le C'e^{\lambda T }\left(\frac{1}{n}+\frac{1}{m}\right)+Ce^{\lambda T}\frac{c_q\epsilon}{\lambda n^{q/2}}\nonumber\\
	&\le C(1+n)^{24T}\left(\frac{1}{n}+\frac{1}{m}+c_qn^{-q/2}(L_n+c_{l})^{-4}\right),
\end{align}
which tends to $0$ by taking $q=2$ and then sending $n\to\infty$, when $T< \frac1{24}$.

Applying the BDG inequality, we have
\begin{align}\label{eb21}
\mathbb{E}\sup\limits_{0\le s\le T}|Y_{s}^{n, m}|^2
&\le C_{T}(1+n)^{24T}\left(\frac{1}{n}+\frac{1}{m}+c_qn^{-q/2}(L_n+c_{l})^{-4}\right)\nonumber\\
&\quad+\frac12\mathbb{E}\sup\limits_{0\le s\le T}|Y_s^{n, m}|^2+C\mathbb{E}\int_0^{T}|Z_s^{n, m}|^2ds,
\end{align}
and thus 
\begin{align}\label{eb211}
\mathbb{E}\sup\limits_{0\le s\le T}|Y_{s}^{n, m}|^2
\to 0, \quad \mbox{as} ~~n, m\to\infty.
\end{align}
Hence,  $\{(Y^n, Z^n)\}_{n\geq1}$ is a Cauchy sequence in $\mathbb{S}^2[0, T]\times\mathbb{H}^2[0, T]$.
\end{proof}

The following theorem is the main result of this section.
\begin{theorem}\label{thb2}
Under Assumptions \ref{assumption1}, \ref{assumption2} and  \ref{assumption4},  there exists a unique solution $( Y, Z, \phi^{(2)})$ to equation \eqref{eb1}.
\end{theorem}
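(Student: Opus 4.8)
The plan is to solve \eqref{eb1} first on a short interval $[0,T_0]$, where $T_0>0$ is a length for which Lemma \ref{lb3} yields the Cauchy property (any $T_0<\tfrac1{24}$ works), and then to reach an arbitrary terminal time by a backward concatenation. Throughout, $(X,\phi)$ is the forward solution produced by Theorem \ref{thos1}: it is unique, belongs to $\mathbb{S}^p[0,T]$ for every $p\ge2$, and is shared by any solution of \eqref{eb1} in the sense of Definition \ref{eqn:def_MVFBSVS}. Hence only the triple $(Y,Z,\phi^{(2)})$ must be constructed and shown unique.

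\textbf{Existence on $[0,T_0]$.} By Lemma \ref{lb3} the approximating pairs $(Y^n,Z^n)$ solving \eqref{eb2} form a Cauchy sequence in $\mathbb{S}^2[0,T_0]\times\mathbb{H}^2[0,T_0]$, so they converge to some $(Y,Z)$ there. Put $\phi^{(2),n}_t:=\int_0^t\nabla\psi_2^n(Y^n_s)\,ds$; reading off \eqref{eb2},
$$\phi^{(2),n}_t=Y^n_t-Y^n_0+\int_0^tF^n(s,X_s,Y^n_s,Z^n_s,\mu_{X_s},\mu_{Y^n_s})\,ds-\int_0^tZ^n_s\,dB_s .$$
The first term converges in $\mathbb{S}^2$; the stochastic integral converges in $\mathbb{S}^2$ by the Burkholder--Davis--Gundy inequality and $Z^n\to Z$ in $\mathbb{H}^2$; and the $F^n$-integral converges in $L^1$ by dominated convergence, using $\mathbbm{1}_{\{\sup_{r\le s}|X_r|\le n\}}\to1$ $\mathbb{P}$-a.s., the growth bound in Assumption \ref{assumption4}, and the local Lipschitz continuity of $F$ in $(y,z,\nu)$ together with $Y^n\to Y$, $Z^n\to Z$ and $W_2(\mu_{Y^n_s},\mu_{Y_s})\to0$. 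Therefore $\phi^{(2),n}\to\phi^{(2)}$ for a continuous process $\phi^{(2)}$, and letting $n\to\infty$ in \eqref{eb2} gives the integral equation for $(Y,Z,\phi^{(2)})$ in Definition \ref{eqn:def_MVFBSVS}.

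\textbf{Bounded variation, the variational inequality, and uniqueness on $[0,T_0]$.} This follows Step $5$ of the proof of Theorem \ref{thos1}, with $\psi_2$, $\psi_2^n$ and $J^{(2)}_nx:=x-\tfrac1n\nabla\psi_2^n(x)$ in place of $\psi$, $\psi^n$, $J_n$. From Lemma \ref{lb2}, $\sup_n\mathbb{E}\,|\phi^{(2),n}|_0^{T_0}\le\sqrt{T_0}\,\big(\sup_n\mathbb{E}\int_0^{T_0}|\nabla\psi_2^n(Y^n_s)|^2ds\big)^{1/2}<\infty$, so along a subsequence with uniform a.s.\ convergence $\phi^{(2),n_k}\to\phi^{(2)}$, Fatou's lemma gives $|\phi^{(2)}|_0^{T_0}<\infty$ a.s.; moreover $|J^{(2)}_nY^n_s-Y^n_s|^2=n^{-2}|\nabla\psi_2^n(Y^n_s)|^2$ forces $J^{(2)}_nY^n\to Y$ in $\mathbb{H}^2[0,T_0]$, whence $Y_t\in\overline{D}_2$ for all $t$ $\mathbb{P}$-a.s.\ by continuity and $J^{(2)}_nx\in D_2$. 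For $\varrho_2\in C([0,T_0];\overline{D}_2)$, the elementary inequality $\int_s^t(\varrho_2(r)-Y^n_r)\,d\phi^{(2),n}_r\le\int_s^t\psi_2(\varrho_2(r))\,dr-\int_s^t\psi_2(J^{(2)}_nY^n_r)\,dr$ (cf.\ \eqref{eos50}) passes to the limit along a further subsequence by Lemma \ref{intconv} (for the pathwise Stieltjes integral on the left, using $\sup_k|\phi^{(2),n_k}|_0^{T_0}<\infty$) and Fatou's lemma (for the last term), yielding the required variational inequality; thus $(X,Y,Z,\phi,\phi^{(2)})$ solves \eqref{eb1} on $[0,T_0]$. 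For uniqueness, take two solutions $(Y,Z,\phi^{(2)})$ and $(\bar Y,\bar Z,\bar\phi^{(2)})$ (with the same $X$) and apply It\^o's formula to $e^{\lambda t}|Y_t-\bar Y_t|^2$: the term in $d(\phi^{(2)}-\bar\phi^{(2)})$ is $\le0$ by the monotonicity argument of Remark \ref{ros1} applied to $\psi_2$; the $F$-difference is split on $\{\sup_{s\le T_0}|X_s|\le R\}$, where the local Lipschitz bound of Assumption \ref{assumption4} with constant $L_R+c_l$ applies, and on its complement, where the growth bound and $\mathbb{P}(\sup_{s\le T_0}|X_s|>R)\le C_qR^{-q}$ (valid since $X\in\mathbb{S}^q$) apply; absorbing $|Z-\bar Z|^2$ by Young's inequality with $\lambda$ large relative to $(L_R+c_l)^2$, then Gr\"onwall's lemma and $R\to\infty$ give $Y=\bar Y$, hence $Z=\bar Z$ upon applying It\^o's formula to the vanishing process $|Y_t-\bar Y_t|^2$, and finally $\phi^{(2)}=\bar\phi^{(2)}$ by difference in \eqref{eb1}.

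\textbf{From $T_0$ to general $T$.} Partition $[0,T]$ backward into finitely many blocks of length at most $T_0$. On the last block solve \eqref{eb1} with terminal datum $G(X_T,\mu_{X_T})$, and inductively on each earlier block with terminal datum the already-constructed $Y$ at the block's right endpoint, which lies in $\overline{D}_2$ and has the finite moments needed, inherited from those of $X$ and of the preceding block. The a priori estimates of Lemmas \ref{lb1} and \ref{lb2} and the Cauchy estimate of Lemma \ref{lb3} apply verbatim on each block with the new terminal datum replacing $G(X_T,\mu_{X_T})$, so the construction and uniqueness above go through blockwise; concatenating and invoking the blockwise uniqueness yields the unique global solution $(Y,Z,\phi^{(2)})$. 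I expect the main obstacle to be the previous step --- producing $\phi^{(2)}$ as a bounded-variation limit and verifying the variational inequality in the limit, which needs the a.s.-along-subsequence analysis, the Fatou passages, and Lemma \ref{intconv} applied against the only uniformly-bounded-variation processes $\phi^{(2),n}$ --- together with keeping the moment bookkeeping consistent through the backward concatenation, since each block's terminal datum is a solution value rather than the prescribed $\xi$.
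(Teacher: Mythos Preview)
Your proposal is correct and follows essentially the same approach as the paper: existence via the Yosida--Moreau approximation \eqref{eb2}, the Cauchy property from Lemma \ref{lb3} on a short interval, passage to the limit in the variational inequality using Lemma \ref{intconv} and Fatou (mirroring Step~5 of Theorem \ref{thos1}), and uniqueness via It\^o's formula on $e^{\lambda t}|Y_t-\bar Y_t|^2$ with the monotonicity inequality \eqref{eb37}. The only cosmetic difference is that the paper absorbs the Lipschitz terms by choosing $\lambda$ large rather than invoking Gr\"onwall, and your backward block concatenation is a more explicit version of the paper's one-line iteration remark.
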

\begin{proof}
We first prove the existence and then the uniqueness.
\medskip

\noindent\textbf{1). Existence.}
 Since  $(Y^n, Z^n)_n$ is a Cauchy sequence in $\mathbb{S}^2[0,T] \times \mathbb{H}^2[0,T]$ for $0\le T\le T_0$ with some $T_0>0$,  there exists a pair of processes $(Y,Z)\in\mathbb{S}^2[0,T] \times \mathbb{H}^2[0,T]$  such that
\begin{equation} \label{eb23}
\begin{aligned}
	\lim\limits_{n\rightarrow\infty}\left[\mathbb{E}\sup\limits_{0\le s\le T}|Y_s^n-Y_s|^2+\mathbb{E}\int_0^T|Z_s^n-Z_s|^2ds\right]=0,
\end{aligned}\nonumber	
\end{equation}
and moreover,
\begin{equation} \label{eb25}
\begin{aligned}
	\lim\limits_{n\rightarrow\infty}\mathbb{E}\sup_{t\in[0,T]}\left|\int_t^T\big(Z_s^n-Z_s\big)dB_s\right|^2=0.
\end{aligned}	\nonumber
\end{equation}
At the same time, according to equations \eqref{eb21}, we have
\begin{align*}
	&\lim\limits_{n\rightarrow\infty}\mathbb{E}\int_0^T\Big[F^n(s, X_s, Y_s^n, Z_s^n, \mu_{X_s}, \mu_{Y_s^n})-F(s, X_s, Y_s, Z_s, \mu_{X_s}, \mu_{Y_s})\Big]^2ds\\
	&\le2\lim\limits_{n\rightarrow\infty}\mathbb{E}\int_0^T\Big[F^n(s, X_s, Y_s^n, Z_s^n, \mu_{X_s}, \mu_{Y_s^n})-F^n(s, X_s, Y_s, Z_s, \mu_{X_s}, \mu_{Y_s})\Big]^2ds\\
	&\quad+2\lim\limits_{n\rightarrow\infty}\mathbb{E}\int_0^T\Big[F^n(s, X_s, Y_s, Z_s, \mu_{X_s}, \mu_{Y_s})-F(s, X_s, Y_s, Z_s, \mu_{X_s}, \mu_{Y_s})\Big]^2ds\\
	&\le2\lim\limits_{n\rightarrow\infty}\int_0^T\big[L_n+\mathbb{E}|X_s|^l\big]^2\Big[2\mathbb{E}|Y^n_s-Y_s|^2+\mathbb{E}|Z^n_s-Z_s|^2\Big]ds\\
	&\quad+C\lim\limits_{n\rightarrow\infty}\mathbb{E}\left(\int_0^T\Big[1+|X_s|^{2l}+|Y_s|^{2k}+|Z_s|^{2k}+ (\mathbb{E}|X_s|^l)^2+\mathbb{E}|Y_s|^2\Big]ds\mathbbm{1}_{\mathcal{A}}\right)\\
	&=0,
\end{align*}
where $\mathcal{A}$ is defined in equation \eqref{eqn:setA}.
Set $\phi_t^{(2,n)}=\int_0^t\nabla\psi_2^n(Y_s^n)ds.$ Then, by equation \eqref{eb2} and the BDG inequality, we have
\begin{equation} \label{eb28}
\begin{aligned}
	\mathbb{E}\sup\limits_{0\le t\le T}|\phi_t^{(2,n)}-\phi_t^{(2,m)}|^2=0.
\end{aligned}	\nonumber
\end{equation}
Therefore, there exists $\phi^{(2)}\in \mathbb{S}^2[0, T]$ such that
\begin{equation} \label{eb29}
\begin{aligned}
	\mathbb{E}\sup\limits_{0\le t\le T}|\phi_t^{(2,n)}-\phi_t^{(2)}|^2=0.
\end{aligned}	\nonumber
\end{equation}

Noting that $\psi_2(J_nx)\le \psi^n_2(x)\le\psi_2(x)$, for any $\varrho\in C([0, T]; \overline{D}_2)$ and $t>s\ge0$, we have
\begin{equation} \label{eb0011}
\begin{aligned}
	\int_s^t(\varrho_r-Y_r^n)d\phi^{(2,n)}_r&\le \int_s^t\psi^n_2(\varrho_r)dr-\int_s^t\psi^n_2(Y_r^n)dr\\
	&\le \int_s^t\psi_2(\varrho_r)dr-\int_s^t\psi_2(J_nY_r^n)dr.
\end{aligned}
\end{equation}

From Definition \ref{d14} and Lemma \ref{lb2}, we have
\begin{align*}
&\hspace{-0.5cm}\lim\limits_{n\rightarrow \infty}\mathbb{E}\int_0^T|J_nY_s^n-Y_s|^2ds\nonumber\\
&\le \lim\limits_{n\rightarrow \infty}\mathbb{E}\int_0^T|J_nY_s^n-Y^n_s|^2ds+\lim\limits_{n\rightarrow \infty}\mathbb{E}\int_0^T|Y^n_s-Y_s|^2ds\nonumber\\
&\le \lim\limits_{n\rightarrow \infty}\frac{C\mathbb{E}\int_0^T|\nabla\psi^n_2(Y_s^n)|^2ds}{n^2}\nonumber\\
&=0,\\
&\hspace{-1cm}\text{and}\quad\lim\limits_{n\rightarrow \infty}\mathbb{E}\int_0^T|J_nY_s-Y_s|^2ds\nonumber\\
&\le \lim\limits_{n\rightarrow \infty}\mathbb{E}\int_0^T|J_nY_s-J_nY^n_s|^2ds+\lim\limits_{n\rightarrow \infty}\mathbb{E}\int_0^T|J_nY^n_s-Y_s|^2ds\nonumber\\
&\le \lim\limits_{n\rightarrow \infty}\mathbb{E}\int_0^T|Y^n_s-Y_s|^2ds\nonumber\\
&=0.\nonumber
\end{align*}
Therefore, by the above equations and Lemmas \ref{lb1} and \ref{lb2}, similar to equation \eqref{intxphi}, there exists a space $\Omega'$ with probability 1 such that for every $\omega\in\Omega'$,  there exists a subsequence $n_k$ (which may depend on $\omega$), satisfying that  for all $\rho\in C([0, T]; \overline{D}_2)$, 
\begin{equation} \label{eb0012}
\begin{aligned}
	&\int_t^T(\rho_s-Y_s^{{n_k}}(\omega))d\phi^{(2,{n_k})}_s(\omega)\rightarrow\int_t^T(\rho_s-Y_s(\omega))d\phi^{(2)}_s(\omega),\\
&\lim\limits_{k\rightarrow\infty}\int_0^T|J_{n_k}Y_s^{n_k}(\omega)-Y_s(\omega)|^2ds=0, \\
&\lim\limits_{k\rightarrow\infty}\int_0^T|J_{n_k}Y_s(\omega)-Y_s(\omega)|^2ds=0.
\end{aligned}
\end{equation}
Then by Fatou's lemma and equations \eqref{eb0011} and \eqref{eb0012}, 
\begin{equation} \label{eb0013}
\begin{aligned}
	\int_s^t\psi_2(Y_r(\omega))dr\le \liminf\limits_{n\rightarrow
		\infty}\int_s^t\psi_2(J_{n_k}Y_r^{n_k}(\omega))dr, 
\end{aligned}	\nonumber
\end{equation}
and moreover taking limits on both sides of \eqref{eb0011} yields that
\begin{equation} \label{eb0014}
\begin{aligned}
	\int_s^t(\varrho_r-Y_r(\omega))d\phi^{(2)}_r\le \int_s^t\psi_2(\varrho_r)dr-\int_s^t\psi_2(Y_r(\omega))dr.
\end{aligned}
\end{equation}
Combining the above discussions, $(X, Y, Z, \phi, \phi^{(2)})$ is a solution to equation \eqref{eb1}.

We can repeat the above arguments on the interval $[T, 2T ]$, and iterate up to any given finite time interval
$[0, T']$ on which we obtain a solution $(X,Y, Z, \phi, \phi^{(2)})$ for equation \eqref{eb1}. 
\bigskip

\noindent\textbf{2). Uniqueness.} 
Consider $(X, \phi)$ as the unique strong solution to the forward equation \eqref{eb1}.
Suppose $(Y, Z, \phi^{(2)})$ and $(\overline{Y}, \overline{Z}, \overline{\phi}^{(2)})$ are both strong solutions to the backward equation of \eqref{eb1}. Thus,
\begin{align*}
	\mathbb{E}\sup\limits_{0\le s\le T}|Y_s|^2+\mathbb{E}\sup\limits_{0\le s\le T}|\overline{Y}_s|^2+\mathbb{E}\int_0^T|Z_s|^2ds+\mathbb{E}\int_0^T|\overline{Z}_s|^2ds\le C_T.
\end{align*}
Using It\^o's formula, for any $\lambda>0$, we have
\begin{equation} \label{eb35}
\begin{aligned}
	&e^{\lambda t}|Y_t-\overline{Y}_t|^2+\lambda\int_t^Te^{\lambda s}|Y_s-\overline{Y}_s|^2ds+\int_t^Te^{\lambda s}|Z_s-\overline{Z}_s|^2ds\\
	&+2\int_t^Te^{\lambda s}(Y_s-\overline{Y}_s)d(\phi_s^{(2)}-\overline{\phi}_s^{(2)})\\
	&=2\int_t^Te^{\lambda s}(Y_s-\overline{Y}_s)\Big(F(s, X_s, Y_s, Z_s, \mu_{X_s}, \mu_{Y_s})\\
	&\hspace{5cm}-F(s, X_s, \overline{Y}_s, \overline{Z}_s, \mu_{X_s}, \mu_{\overline{Y}_s})\Big)ds\\
	&\quad-2\int_t^Te^{\lambda s}(Y_s-\overline{Y}_s)(Z_s-\overline{Z}_s)dB_s.
\end{aligned}\nonumber
\end{equation}	
Taking expectations on both sides of the above equation, we obtain
\begin{align*}
&\mathbb{E}e^{\lambda t}|Y_t-\overline{Y}_t|^2+\lambda \mathbb{E}\int_t^Te^{\lambda s}|Y_s-\overline{Y}_s|^2ds+\mathbb{E}\int_t^Te^{\lambda s}|Z_s-\overline{Z}_s|^2ds\\
&+2\mathbb{E}\int_t^Te^{\lambda s}(Y_s-\overline{Y}_s)d(\phi_s^{(2)}-\overline{\phi}_s^{(2)})\\
&=2\mathbb{E}\int_t^Te^{\lambda s}(Y_s-\overline{Y}_s)\Big(F(s, X_s, Y_s, Z_s, \mu_{X_s}, \mu_{Y_s})\\
&\hspace{5cm}-F(s, X_s, \overline{Y}_s, \overline{Z}_s, \mu_{X_s}, \mu_{\overline{Y}_s})\Big)ds.
\end{align*}
According to the definition of the solution, we know that
\begin{equation} \label{eb37}
\begin{aligned}
	\mathbb{E}\int_t^Te^{\lambda s}(Y_s-\overline{Y}_s)d(\phi_s^{(2)}-\overline{\phi}_s^{(2)})\ge 0.		
\end{aligned}
\end{equation}
Similar to the proof of Lemma \ref{lb3}, we have
\begin{align*}
	&\int_t^Te^{\lambda s}|Y_s-\overline{Y}_s|\Big|F(s, X_s, Y_s, Z_s, \mu_{X_s}, \mu_{Y_s})-F(s, X_s, \overline{Y}_s, \overline{Z}_s, \mu_{X_s}, \mu_{\overline{Y}_s})\Big|ds\\
	&\le\int_t^Te^{\lambda s}	|Y_s-\overline{Y}_s|\Big[F(s, X_s, Y_s, Z_s, \mu_{X_s}, \mu_{Y_s})-F(s, X_s, \overline{Y}_s, \overline{Z}_s, \mu_{X_s}, \mu_{\overline{Y}_s})\Big]ds
	\mathbbm{1}_{\mathcal{A}^c}\\
	&\quad+\int_t^Te^{\lambda s}|Y_s-\overline{Y}_s|	\Big[F(s, X_s, Y_s, Z_s, \mu_{X_s}, \mu_{Y_s})-F(s, X_s, \overline{Y}_s, \overline{Z}_s, \mu_{X_s}, \mu_{\overline{Y}_s})\Big]ds\mathbbm{1}_{\mathcal{A}}\\
	&\le \frac{1}{\epsilon}\int_t^Te^{\lambda s}|Y_s-\overline{Y}_s|^2\\
	&\quad
	+6\epsilon (L_n+c'_{l})^2\int_t^Te^{\lambda s}\Big(|Y_s-\overline{Y}_s|^2+|Z_s-\overline{Z}_s|^2+\mathbb{E}|Y_s-\overline{Y}_s|^2\Big)ds\\
	&\quad+2C^2\epsilon \int_t^Te^{\lambda s}\Big(1+2|X_s|^{l}+2\mathbb{E}|X_s|^l+|Y_s|^k+|\overline{Y}_s|^k\\
	&\hspace{4cm}+|Z_s|^k+|\overline{Z}_s|^k+\big(\mathbb{E}|Y_s|^2\big)^{1/2}+\big(\mathbb{E}|\overline{Y}_s|^2\big)^{1/2}\Big)^2ds\mathbbm{1}_{\mathcal{A}},
\end{align*}
where $c'_{l}:=\Big(\mathbb{E}\big(\sup_{t\leq T}|X_t|^l\big)\Big)^{1/l}$.

Then similar to equation \eqref{eb20}, choosing $\epsilon=\frac1{12(L_n+c'_{l})^2}$ and  $\lambda=12(L_n+c'_{l})^2+3/2$, we obtain
\begin{equation} \label{eb39}
\begin{aligned}
	&\mathbb{E}e^{\lambda t}|Y_t-\overline{Y}_t|^2+\frac12\mathbb{E}\int_t^Te^{\lambda s}\big(|Y_s-\overline{Y}_s|^2+|Z_s-\overline{Z}_s|^2\big)ds\\
	&\le C'\epsilon \mathbb{E}\Bigg[\int_t^Te^{\lambda s}\Big(1+2|X_s|^{l}+2\mathbb{E}|X_s|^l+|Y_s^k|+|\overline{Y}_s|^k\\
	&\hspace{3.5cm}+|Z_s|^k+|\overline{Z}_s|^k+\big(\mathbb{E}|Y_s|^2\big)^{1/2}+\big(\mathbb{E}|\overline{Y}_s|^2\big)^{1/2}\Big)^2ds\mathbbm{1}_{\mathcal{A}}\Bigg].
\end{aligned}\nonumber
\end{equation}
Sending $n\rightarrow\infty$, we have
\begin{equation} \label{eb39}
\begin{aligned}
	\mathbb{E}|Y_t-\overline{Y}_t|^2+\mathbb{E}\int_t^T\big(|Y_s-\overline{Y}_s|^2+|Z_s-\overline{Z}_s|^2\big)ds=0,
\end{aligned}\nonumber
\end{equation}
which together with the BDG inequality yields that
\begin{equation} \label{eb39}
\begin{aligned}
	\mathbb{E}\sup_{t\in[0,T]}|Y_t-\overline{Y}_t|^2+\mathbb{E}\int_0^T|Z_s-\overline{Z}_s|^2ds=0.\end{aligned}\nonumber
\end{equation}
From the uniqueness of $(Y,Z)$, we obtain the uniqueness of $\phi^{(2)}$. Therefore, the solution to equation \eqref{eb1} is unique.
\end{proof}

\section*{Declaration of competing interest}
The authors declare that they have no known competing financial interests or personal relationships that could have appeared to influence the work reported in this paper.

\section*{Acknowledgments}
The research of Jing Wu and Jinwei Zheng is supported by NSFC (No. 12071493). The authors would like to thank two anonymous reviewers and the Editors for their very constructive comments and efforts on this lengthy work, which greatly improved the quality of this paper.


\bibliography{bib-ms}

\begin{thebibliography}{30}
\providecommand{\natexlab}[1]{#1}
\providecommand{\url}[1]{\texttt{#1}}
\expandafter\ifx\csname urlstyle\endcsname\relax
  \providecommand{\doi}[1]{doi: #1}\else
  \providecommand{\doi}{doi: \begingroup \urlstyle{rm}\Url}\fi

\bibitem[Adams et~al.(2022)Adams, Dos~Reis, Ravaille, Salkeld, and
  Tugaut]{adams2022large}
Daniel Adams, Goncalo Dos~Reis, Romain Ravaille, William Salkeld, and Julian
  Tugaut.
\newblock Large deviations and exit-times for reflected {M}c{K}ean--{V}lasov
  equations with self-stabilising terms and superlinear drifts.
\newblock \emph{Stochastic Processes and their Applications}, 146:\penalty0
  264--310, 2022.

\bibitem[Al{\`o}s et~al.(1999)Al{\`o}s, Le{\'o}n, and
  Nualart]{alos1999stochastic}
Elisa Al{\`o}s, Jorge~A Le{\'o}n, and David Nualart.
\newblock Stochastic heat equation with random coefficients.
\newblock \emph{Probability Theory and Related Fields}, 115\penalty0
  (1):\penalty0 41--94, 1999.

\bibitem[Bao and Huang(2021)]{bao2021approximations}
Jianhai Bao and Xing Huang.
\newblock Approximations of {M}c{K}ean--{V}lasov stochastic differential
  equations with irregular coefficients.
\newblock \emph{Journal of Theoretical Probability}, pages 1--29, 2021.

\bibitem[Barbu(2010)]{barbu2010nonlinear}
Viorel Barbu.
\newblock \emph{{Nonlinear Differential Equations of Monotone Types in {B}anach
  Spaces}}.
\newblock Springer Science \& Business Media, 2010.

\bibitem[Bayraktar and Qiu(2019)]{bayraktar2019controlled}
Erhan Bayraktar and Jinniao Qiu.
\newblock Controlled reflected {{SDE}}s and {N}eumann problem for backward
  {SPDE}s.
\newblock \emph{The Annals of Applied Probability}, 29\penalty0 (5):\penalty0
  2819--2848, 2019.

\bibitem[Carmona and Delarue(2015)]{carmona2015forward}
Ren{\'e} Carmona and Fran{\c{c}}ois Delarue.
\newblock Forward-backward stochastic differential equations and controlled
  {M}c{K}ean-{V}lasov dynamics.
\newblock \emph{The Annals of Probability}, 43\penalty0 (5):\penalty0
  2647--2700, 2015.

\bibitem[Carmona and Delarue(2018)]{carmona2018probabilistic}
Ren{\'e} Carmona and Fran{\c{c}}ois Delarue.
\newblock \emph{{Probabilistic Theory of Mean Field Games with Applications
  {I}-{II}}}.
\newblock Springer, 2018.

\bibitem[C{\'e}pa(1998)]{cepa1998problame}
Emmanuel C{\'e}pa.
\newblock Probl{\`a}me de skorohod multivoque.
\newblock \emph{The Annals of Probability}, 26\penalty0 (2):\penalty0 500--532,
  1998.

\bibitem[Crisan and McMurray(2018)]{crisan2018smoothing}
Dan Crisan and Eamon McMurray.
\newblock Smoothing properties of {M}c{K}ean--{V}lasov {SDE}s.
\newblock \emph{Probability Theory and Related Fields}, 171:\penalty0 97--148,
  2018.

\bibitem[de~Raynal(2020)]{de2020strong}
P.E.~Chaudru de~Raynal.
\newblock Strong well posedness of {M}c{K}ean--{V}lasov stochastic differential
  equations with {H}{\"o}lder drift.
\newblock \emph{Stochastic Processes and their Applications}, 130\penalty0
  (1):\penalty0 79--107, 2020.

\bibitem[Dos~Reis et~al.(2019)Dos~Reis, Salkeld, and Tugaut]{dos2019freidlin}
Gon{\c{c}}alo Dos~Reis, William Salkeld, and Julian Tugaut.
\newblock Freidlin--{W}entzell {LDP} in path space for {M}c{K}ean--{V}lasov
  equations and the functional iterated logarithm law.
\newblock \emph{The Annals of Applied Probability}, 29\penalty0 (3):\penalty0
  1487 -- 1540, 2019.

\bibitem[Hammersley et~al.(2021)Hammersley, {\v{S}}i{\v{s}}ka, and
  Szpruch]{hammersley2021McKean}
William~RP Hammersley, David {\v{S}}i{\v{s}}ka, and {\L}ukasz Szpruch.
\newblock {{M}c{K}ean--{V}lasov {SDE}s under measure dependent {L}yapunov
  conditions}.
\newblock \emph{Annales de l'Institut Henri Poincaré, Probabilités et
  Statistiques}, 57\penalty0 (2):\penalty0 1032 -- 1057, 2021.

\bibitem[Hausenblas(2007)]{hausenblas2007spdes}
Erika Hausenblas.
\newblock {SPDE}s driven by {P}oisson random measure with non {L}ipschitz
  coefficients: existence results.
\newblock \emph{Probability Theory and Related Fields}, 137\penalty0
  (1):\penalty0 161--200, 2007.

\bibitem[Huang and Wang(2019)]{huang2019distribution}
Xing Huang and Feng-Yu Wang.
\newblock Distribution dependent {SDE}s with singular coefficients.
\newblock \emph{Stochastic Processes and their Applications}, 129\penalty0
  (11):\penalty0 4747--4770, 2019.

\bibitem[Huang and Wang(2022)]{huang2022singular}
Xing Huang and Feng-Yu Wang.
\newblock Singular {M}c{K}ean--{V}lasov (reflecting) {SDE}s with distribution
  dependent noise.
\newblock \emph{Journal of Mathematical Analysis and Applications},
  514\penalty0 (1):\penalty0 126301, 2022.

\bibitem[Li et~al.(2023)Li, Mao, Song, Wu, and Yin]{li2023strong}
Yun Li, Xuerong Mao, Qingshuo Song, Fuke Wu, and George Yin.
\newblock Strong convergence of {E}uler--{M}aruyama schemes for
  {M}c{K}ean--{V}lasov stochastic differential equations under local
  {L}ipschitz conditions of state variables.
\newblock \emph{IMA Journal of Numerical Analysis}, 43\penalty0 (2):\penalty0
  1001--1035, 2023.

\bibitem[Liu et~al.(2023)Liu, Shi, and Wu]{liu2023tamed}
Huagui Liu, Banban Shi, and Fuke Wu.
\newblock Tamed {E}uler--{M}aruyama approximation of {M}c{K}ean--{V}lasov
  stochastic differential equations with super-linear drift and {H}{\"o}lder
  diffusion coefficients.
\newblock \emph{Applied Numerical Mathematics}, 183:\penalty0 56--85, 2023.

\bibitem[Ma and Cvitani{\'c}(2001)]{ma2001reflected}
Jin Ma and Jak{\v{s}}a Cvitani{\'c}.
\newblock Reflected forward-backward {SDE}s and obstacle problems with boundary
  conditions.
\newblock \emph{Journal of Applied Mathematics and Stochastic Analysis},
  14\penalty0 (2):\penalty0 113--138, 2001.

\bibitem[Ning and Wu(2021)]{ning2021well}
Ning Ning and Jing Wu.
\newblock Well-posedness and stability analysis of two classes of generalized
  stochastic volatility models.
\newblock \emph{SIAM Journal on Financial Mathematics}, 12\penalty0
  (1):\penalty0 79--109, 2021.

\bibitem[Ning and Wu(2023)]{ning2023multi}
Ning Ning and Jing Wu.
\newblock Multi-dimensional path-dependent forward-backward stochastic
  variational inequalities.
\newblock \emph{Set-Valued and Variational Analysis}, 31\penalty0 (1):\penalty0
  2, 2023.

\bibitem[{\O}ksendal(2003)]{oksendal2003stochastic}
Bernt {\O}ksendal.
\newblock \emph{{Stochastic Differential Equations}}.
\newblock Springer, 2003.

\bibitem[Ren(2023)]{ren2023singular}
Panpan Ren.
\newblock Singular mckean--vlasov sdes: Well-posedness, regularities and
  wang’s harnack inequality.
\newblock \emph{Stochastic Processes and their Applications}, 156:\penalty0
  291--311, 2023.

\bibitem[Rockafellar(1970)]{rockafellar1970maximal}
Ralph Rockafellar.
\newblock On the maximal monotonicity of subdifferential mappings.
\newblock \emph{Pacific Journal of Mathematics}, 33\penalty0 (1):\penalty0
  209--216, 1970.

\bibitem[R{\"o}ckner and Zhang(2021)]{rockner2021well}
Michael R{\"o}ckner and Xicheng Zhang.
\newblock Well-posedness of distribution dependent {{SDE}s} with singular
  drifts.
\newblock \emph{Bernoulli}, 27\penalty0 (2):\penalty0 1131--1158, 2021.

\bibitem[Sznitman(1984)]{sznitman1984nonlinear}
Alain-Sol Sznitman.
\newblock Nonlinear reflecting diffusion process, and the propagation of chaos
  and fluctuations associated.
\newblock \emph{Journal of Functional Analysis}, 56\penalty0 (3):\penalty0
  311--336, 1984.

\bibitem[Wang(2018)]{wang2018distribution}
Feng-Yu Wang.
\newblock Distribution dependent {SDE}s for {L}andau type equations.
\newblock \emph{Stochastic Processes and their Applications}, 128\penalty0
  (2):\penalty0 595--621, 2018.

\bibitem[Wang(2023{\natexlab{a}})]{wang2023distribution}
Feng-Yu Wang.
\newblock Distribution dependent reflecting stochastic differential equations.
\newblock \emph{Science China Mathematics}, 66\penalty0 (11):\penalty0
  2411--2456, 2023{\natexlab{a}}.

\bibitem[Wang(2023{\natexlab{b}})]{wang2023exponential}
Feng-Yu Wang.
\newblock Exponential ergodicity for non-dissipative {M}c{K}ean-{V}lasov
  {SDE}s.
\newblock \emph{Bernoulli}, 29\penalty0 (2):\penalty0 1035--1062,
  2023{\natexlab{b}}.

\bibitem[Wang(2023{\natexlab{c}})]{wang2023exponentialSPA}
Feng-Yu Wang.
\newblock Exponential ergodicity for singular reflecting {M}c{K}ean--{V}lasov
  {SDE}s.
\newblock \emph{Stochastic Processes and their Applications}, 160:\penalty0
  265--293, 2023{\natexlab{c}}.

\bibitem[Yamada and Watanabe(1971)]{yamada1971uniqueness}
Toshio Yamada and Shinzo Watanabe.
\newblock On the uniqueness of solutions of stochastic differential equations.
\newblock \emph{Journal of Mathematics of Kyoto University}, 11\penalty0
  (1):\penalty0 155--167, 1971.

\end{thebibliography}

\end{document}